%
%
%
\documentclass[microtype]{gtpart} 
%
%
%
%
%
\usepackage{graphicx}  
%
%
%

\title{Partial duals of plane graphs, separability and the graphs of knots}

%
\author{Iain Moffatt}
\givenname{Iain}
\surname{Moffatt}
\address{Department of Mathematics and Statistics,  University of South Alabama, Mobile, AL 36688, USA.}
\email{imoffatt@jaguar1.usouthal.edu}
\urladdr{http://www.southalabama.edu/mathstat/personal${}_{-}$pages/imoffatt/}

%
%
%
%
%

\keyword{$1$-sum}
\keyword{checkerboard graph}
\keyword{dual}
\keyword{embedded graph}
\keyword{knots and links}
\keyword{Partial duality}
\keyword{plane graph}
\keyword{ribbon graph}
\keyword{separability}
\keyword{Tait graph}
\keyword{Turaev surface}
\subject{primary}{msc2010}{05C10}
\subject{primary}{msc2010}{57M15}
\subject{secondary}{msc2010}{57M25}
\subject{secondary}{msc2010}{05C75}

%

\arxivreference{1007.4219}

%
%
\volumenumber{}
\issuenumber{}
\publicationyear{}
\papernumber{}
\startpage{}
\endpage{}
\doi{}
\MR{}
\Zbl{}
\received{}
\revised{}
\accepted{}
\published{}
\publishedonline{}
\proposed{}
\seconded{}
\corresponding{}
\editor{}
\version{}

%
%
%
%
%
%
%
\theoremstyle{definition}
%
%

\newtheorem{theorem}{Theorem}[section]
\newtheorem{proposition}[theorem]{Proposition}
\newtheorem{lemma}[theorem]{Lemma}
\newtheorem{corollary}[theorem]{Corollary}
\newtheorem{theorem*}{Main Theorem}

\theoremstyle{definition}
\newtheorem{definition}[theorem]{Definition}

\theoremstyle{remark}
\newtheorem{remark}[theorem]{Remark}
\newtheorem{example}[theorem]{Example}

\newcommand{\G}{\mathbb{G}}
\newcommand{\T}{T}
\newcommand{\calD}{\mathcal{D}}
\newcommand{\pb}{plane-biseparation }
\newcommand{\pbs}{plane-biseparations }
\newcommand{\pbt}{plane-biseparation}
\newcommand{\pbst}{plane-biseparations}
\newcommand{\jpb}{plane-join-biseparation }
\newcommand{\jpbs}{plane-join-biseparations }
\newcommand{\jpbt}{plane-join-biseparation}
\newcommand{\jpbst}{plane-join-biseparations}

\newcommand{\bs}{\backslash}


\begin{document}

\begin{abstract}    
There is a well-known way to describe a link diagram as a (signed) plane graph, called its Tait graph. This concept was recently  extended, providing a way to  associate a set of embedded graphs (or ribbon graphs) to a link diagram. 
While every plane graph arises as a Tait graph of  a unique link diagram, not every embedded graph represents a link diagram. Furthermore, although a Tait graph describes a unique link diagram, the same embedded graph can represent many different link diagrams.  
One is then led to ask which embedded graphs represent link diagrams, and how link diagrams presented by the same embedded graphs are related to one another. Here we answer these questions by characterizing
  the class of embedded graphs that represent link diagrams, and then using this characterization to find a move  that relates all of the link diagrams that are presented by the same set of embedded graphs.    
\end{abstract}

\maketitle 


\section{Overview}
\subsection{Introduction and motivation}
There is a classical and well-known way to associate a (signed) plane graph, called a {\em Tait graph} (or {\em checkerboard graph} or {\em $2$-face graph}), to the diagram of a link: start with a checkerboard colouring of a link diagram (i.e. colour each face black or white in such a way that adjacent faces are of different colour); place a vertex in each black face; add an edge between vertices corresponding to incident black faces; and sign the edges to record the crossing type  (see Subsection~\ref{ss.tg} and Figure~\ref{f.taitgraph}). 
Tait graphs are  a standard tool in knot theory. They provide a bridge between knot theory and graph theory,  and have found numerous applications in both of these areas. 

Since there are exactly two possible checkerboard colourings of a link diagram, every link diagram admits exactly two (signed) Tait graphs. Moreover, these two graphs are geometric duals of each other (see  Subsection~\ref{ss.duals} for a definition of duality, noting that duality changes the sign of an edge.) Also, as a unique link diagram can be obtained from every signed plane graph,   every signed plane graph arises as a Tait graph (see Subsection~\ref{ss.med}). Thus Tait graphs  have the following properties:
\begin{enumerate}
\renewcommand{\theenumi}{T\arabic{enumi}}
\item \label{t1} The two Tait graphs associated with a link diagram are geometric duals. 
\item  \label{t2} Every signed plane graph is the Tait graph of a link diagram.
\item \label{t3} A Tait graph gives rise to a unique link diagram.
\end{enumerate}

Recently, in \cite{Detal}, Dasbach,  Futer,  Kalfagianni,  Lin and Stoltzfus (see also Turaev~\cite{Tu97}) extended the idea of a Tait graph  by associating a set of  embedded graphs to a link diagram. In this construction, each embedded graph arises by assigning one of the two possible smoothings at each crossing of the link (see Subsection~\ref{ss.rgl} and Figure~\ref{f.rgoflink}).   The Tait graphs of a link diagram appear in this set of embedded graphs. 
One of the key advantages to this approach of using non-plane graphs to describe links is that it provides a way to encode the crossing structure of a link diagram in the topology of the embedded graph, rather than by using signs on the edges. 
This idea has proved to be  very useful  and  has found many recent applications in knot theory, such as to the Jones and {\small HOMFLY-PT} polynomials \cite{CP,CV,Ch1, Detal, Mo2, Mo3, VT10}, Khovanov homology \cite{CKS07, Detal}, knot Floer homology \cite{Lo08},    Turaev genus  \cite{Ab09,Lo08,Tu97},
quasi-alternating links \cite{Wi09},  the coloured Jones polynomial \cite{FKP09}, the signature of a knot \cite{DL10}, the  determinant of a knot \cite{Detal, Detal2}, and hyperbolic knot theory \cite{FKP08}.

Given the breadth  of applications, understanding the structure of the set of embedded graphs of a knot  is a fundamental and important problem.   
By considering  the properties~\ref{t1}-\ref{t3} of Tait graphs, we are led to ask 
what the corresponding properties for the  more general embedded graphs of a link diagram are:
\begin{enumerate}
\renewcommand{\theenumi}{Q\arabic{enumi}}
\item \label{q1} How are the embedded graphs of a link diagram related to each other?
\item  \label{q2} Which embedded graphs arise as embedded graphs of a link diagram?
\item \label{q3} What is the relation between link diagrams that are presented by  the same signed embedded graphs?  
\end{enumerate}
Here we answer these three questions, and in doing so we introduce and develop a theory that relates the structure and the topology of an embedded graph.

\subsection{Overview of results}
The answer to Question~\ref{q1} is known (see \cite{Ch1}): just as Tait graphs are geometric duals, all of the embedded graphs of a link diagram are partial duals.  Partial duality is a recent extension of the concept of geometric duality. Loosely speaking, a partial dual of an embedded graph is obtained by forming its geometric dual  with respect to only a subset of its edges (see Subsection~\ref{ss.pd}).
Partial duality was introduced by Chmutov in \cite{Ch1} to relate various realizations of the Jones polynomial as a graph polynomial (see also \cite{Mo2}). It has since found several applications to knot theory, graph theory and physics (see \cite{Ch1,EMM,HM11,HMV,KRVT09,Mo2,Mo3, Mo4,VT,VT10}), and it appears to be a fundamental operation on an embedded graph.  

While every plane graph arises as a Tait graph of a link diagram (Property~\ref{t2}), it is {\em not} the case that every embedded graph represents a link diagram. Question \ref{q2} is therefore non-trivial. A solution to \ref{q2}, however, arises through connections with Tait graphs:
since Tait graphs belong to the  set of embedded graphs of a link diagram, it follows that an embedded graph represents a link diagram if and only if it is the partial dual of a plane graph. Thus \ref{q2} can be reformulated as the graph theoretical problem of characterizing partial duals of plane graphs. We characterize this class of embedded graphs in terms of separability. We say that an embedded graph $G$ is a $1$-sum of two non-trivial subgraphs $P$ and $Q$ if $G=P\cup Q$, and $P\cap Q$ is a single vertex (see Figure~\ref{f.join} and Subsection~\ref{ss.sum}).  In Section~\ref{s.1-sums} we introduce the concept of a \pbt, which, loosely speaking, says that an embedded graph can be constructed by $1$-summing plane graphs from two sets, $P_A$ and $Q_A$, in such a way that every $1$-sum involves exactly one graph in $P_A$ and exactly one  in $Q_A$. 
Our first main result (cf. Theorems~\ref{t.pchar1} and~\ref{t.pchar2}) characterizes the partial duals of plane graphs, and hence answers \ref{q2}: 
\begin{theorem*}
An embedded graph is the partial dual of a plane graph if and only if it admits a \pbt.
Consequently, an embedded graph represents a link diagram if and only if it admits a \pbt.
\end{theorem*}
This theorem also provides an interesting connection between the structure of an embedded graph and the genus of its partial duals. This connection is further developed in \cite{Mo6}.

Moving on to Question~\ref{q3}, we first note that while every Tait graph represents a unique link diagram (Property~\ref{t1}), this is not the case for embedded graphs: an embedded graph can represent many different link diagrams. Given an embedded graph $G$, to recover a link diagram it represents, one first has to obtain a plane partial dual, and then, from this, form a link diagram. An embedded graph can have many plane partial duals, and the fact that an embedded graph can represent many different link diagrams is a direct consequence of this. Thus to answer  \ref{q3}, we need to relate plane partial duals. To do this we introduce a simple move on embedded graphs called a dual of a join-summand move (see Definition~\ref{d.djs}). We then prove (cf. Theorem~\ref{t.sim}) that this simple move relates all plane partial duals:
\begin{theorem*}
Let $G$ and $H$ be plane graphs. Then $G$ and $H$ are partial duals if and only if they are related by dual of a join-summand moves.
\end{theorem*}
We then apply  this theorem in Section~\ref{s.knots} to  answer \ref{q3} by relating all of the link diagrams represented by the same embedded graphs. We define a summand-flip to be a move on a link diagram that ``flips over'' a connected sum component of a link diagram (see Figure~\ref{f.sf} and Definition~\ref{d.lflip}). We prove:
\begin{theorem*}
Link diagrams $D$ and $D'$ are represented by the same set ribbon graphs if and only if they are related by a sequence of summand-flips.
\end{theorem*}
This theorem appears as Theorem~\ref{t.diagrams}. In particular, this means that if $D$ and $D'$ are represented by the same embedded graphs, then they represent the same link diagram.

\medskip

This paper is structured as follows. In Section~\ref{s.emb} we review some properties of embedded graphs and their representations. In Section~\ref{s.gol} we recall the construction of Tait graphs and the embedded graphs of a link diagram. In Section~\ref{s.pd}, we describe partial duality, its relation to link diagrams, and use it to reformulate \ref{q2} and \ref{q3}. In Section~\ref{s.1-sums} we introduce \pbs and other related decompositions of ribbon graphs.
Section~\ref{s.char} contains the characterization of partial duals of plane graphs in terms of \pbst. In Section~\ref{s.join} we study and relate plane partial duals. Finally, in Section~\ref{s.knots}, we provide answers to \ref{q1}-\ref{q3}.

The graph theoretical results presented here  are of interest in their own right, and a reader interested in only these results may prefer to skip the knot theory, reading only Sections~\ref{s.emb}, \ref{ss.pd}, and \ref{s.1-sums}-\ref{s.join}.

\section{Embedded graphs}\label{s.emb}
In this section we review our basic objects of study:  embedded graphs, ribbon graphs, and arrow presentations. 
Our terminology is standard, and a reader familiar with ribbon graphs, their realizations as cellularly embedded graphs and arrow presentations, and with geometric duality may safely skip this section.

\subsection{Embedded graphs and their descriptions }

\begin{figure}
\begin{tabular}{ccccc}
\raisebox{-10mm}{\includegraphics[height=2cm]{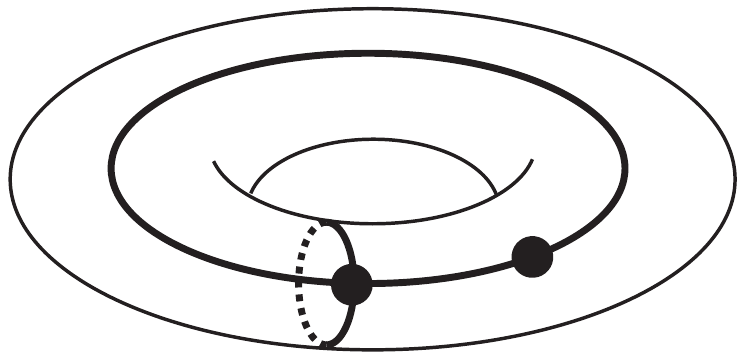}} &\raisebox{0mm}{=}   & \raisebox{-10mm}{\includegraphics[height=2cm]{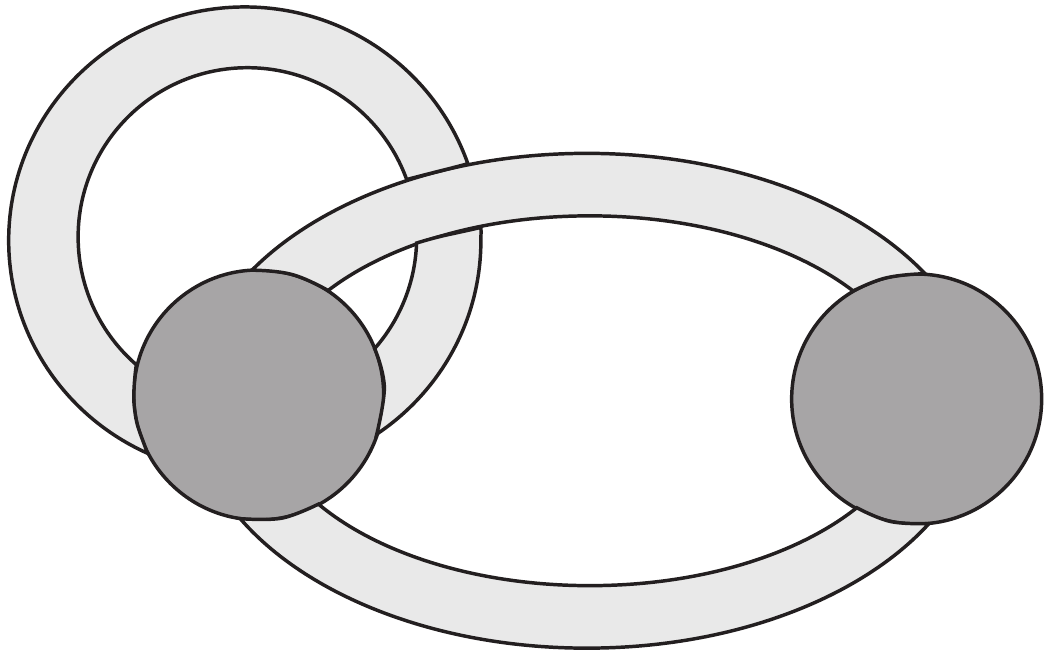}} &\raisebox{0mm}{=}   & \raisebox{-7mm}{\includegraphics[height=1.5cm]{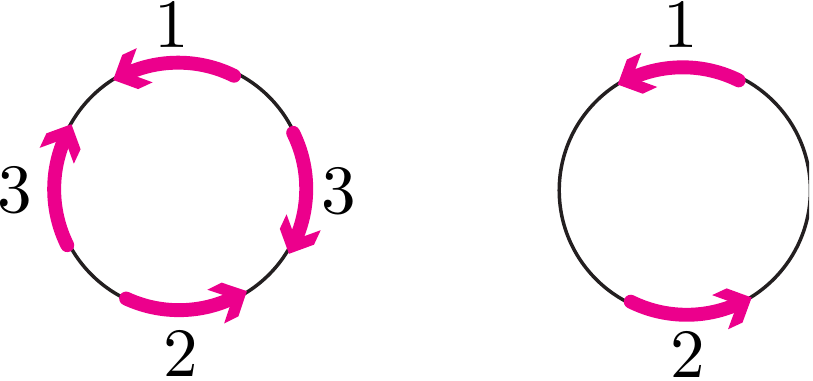} } \\
&&&&\\
Cellularly embedded graph. & &Ribbon Graph. & & Arrow presentation.
\end{tabular}
\caption{Three descriptions of the same embedded graph.}
\label{f.desc}
\end{figure}

\subsubsection{Cellularly embedded graphs}

An {\em embedded  graph} $G=(V(G),E(G)) \subset \Sigma$ is a graph drawn on surface $\Sigma$  in such a way that edges only intersect at their ends. The arcwise-connected components of $\Sigma \bs G$ are called the {\em regions} of $G$, and regions homeomorphic to discs are called {\em faces}. If each of the regions of an embedded graph $G$ is a face we say that $G$ is a {\em cellularly embedded graph}. An example of a graph cellularly embedded in the torus is given in Figure~\ref{f.desc}. Note that if a cellularly embedded graph $G\subset \Sigma$ is not  connected, then each component of $G$ is cellularly embedded in a distinct component of $\Sigma$.

 The {\em genus} a cellularly embedded graph is the genus of the surface it is embedded in. 
 A {\em plane graph} is a graph that is cellularly embedded in a genus zero surface, i.e. each of its components is cellularly embedded in a sphere $S^2$. 
 
Two embedded graphs, $G\subset \Sigma$ and  $G'\subset \Sigma'$  are  {\em equivalent}   if there is a homeomorphism from  $\Sigma$ 
to $\Sigma'$ that sends $G$ to $G'$. As is standard, we consider embedded graphs up to equivalence.

\subsubsection{Ribbon graphs}
One of the difficulties when working with a cellularly embedded graph $G$ is that deleting  one of its edges may result in a graph that is not cellularly embedded. More generally, if $G\subset \Sigma$ is cellularly embedded, then although a  subgraph $H$ of $G$ is embedded in $\Sigma$, it need not be cellularly embedded. 
In order to get around this difficulty we work in the language of ribbon graphs. Ribbon graphs correspond to cellularly embedded graphs, but have the advantage that subgraphs (and minors) of ribbon graphs are also ribbon graphs - a property that is vital here.

\begin{definition}
A {\em ribbon graph} $G =\left(  V(G),E(G)  \right)$ is a (possibly non-orientable) surface with boundary represented as the union of two  sets of topological discs, a set $V (G)$ of {\em vertices}, and a set of {\em edges} $E (G)$ such that: 
\begin{enumerate}
\item the vertices and edges intersect in disjoint line segments;
\item each such line segment lies on the boundary of precisely one
vertex and precisely one edge;
\item every edge contains exactly two such line segments.
\end{enumerate}
\end{definition}

Ribbon graphs are considered up to  homeomorphisms of the surface that preserve the vertex-edge structure.  

A ribbon graph is said to be {\em orientable} if it is orientable as a surface. Here we will only consider orientable ribbon graphs. The {\em genus}, $g(G)$, of a ribbon graph is the genus of $G$ as a punctured surface.  A ribbon graph is {\em plane} if it is of genus zero.  We emphasize that here a plane ribbon graph need not be connected.

Ribbon graphs are easily seen (and well-known) to be equivalent to cellularly embedded 
graphs. Intuitively, if $G$ is a cellularly embedded graph, a ribbon 
graph representation results from taking a small neighbourhood  of 
the cellularly embedded graph $G$. On the other hand, if $G$ is a 
ribbon graph, we simply sew discs into each boundary component of the 
ribbon graph to get the desired surface. See Figure~\ref{f.desc}.

A ribbon graph $H =\left(  V(H),E(H)  \right)$ is a {\em ribbon subgraph} of $G =\left(  V(G),E(G)  \right)$ if $H$ can be obtained by deleting vertices and edges of $G$. If $V(H)=V(G)$, then the ribbon subgraph $H$ is a  {\em spanning ribbon subgraph} of $G$.  If $A\subseteq E(G)$, then the ribbon subgraph {\em induced} by $A$, denoted $G|_A$, is the ribbon subgraph of $G$ that consists of the edges in $A$ and their incident vertices. 
We will often regard a ribbon subgraph $H$ as being embedded in $G$, and will often identify the vertices and edges of $H$ with the corresponding vertices and edges in $G$. 

Throughout the paper we use $A^c:=E(G)\bs A$  to denote the complement of $A\subseteq E(G)$.

\subsubsection{Arrow presentations}

At times we will find it particularly convenient to represent ribbon graphs as arrow presentations.  
 \begin{definition}[Chmutov \cite{Ch1}] An {\em arrow presentation} consists of  a set of  circles,  each with a collection of disjoint,  labelled arrows, called {\em marking arrows}, lying on them. Each label appears on exactly two arrows. 
\end{definition}

An arrow presentation is shown in Figure~\ref{f.desc}.

 Two arrow presentations are considered equivalent if one can be obtained from the other by reversing the direction of all of the marking arrows which belong to some subset of labels, or by changing the labelling set. The circles in an arrow presentation are considered up to homeomorphism.   We consider all arrow presentations up to equivalence.

A ribbon graph can be obtained from an arrow presentation as follows. View each circle as the boundary of a disc. Each disc becomes a vertex of the ribbon graph.  Edges are then added to the vertex discs in the following way. Take an oriented disc for each label of the marking arrows.  Choose two non-intersecting arcs on the boundary of each of the edge discs and direct these according to the orientation. Identify these two arcs with two marking arrows, both with the same label, aligning the direction of each arc consistently with the orientation of the marking arrow. This process can be illustrated pictorially thus:
\[ \includegraphics[height=10mm]{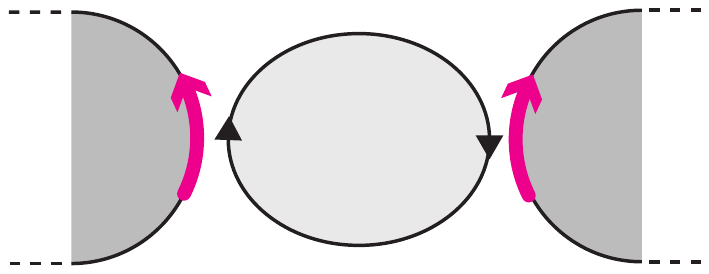}  \hspace{4mm}
\raisebox{4mm}{\includegraphics[width=11mm]{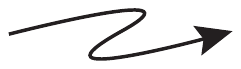}}\hspace{4mm}
\includegraphics[height=10mm]{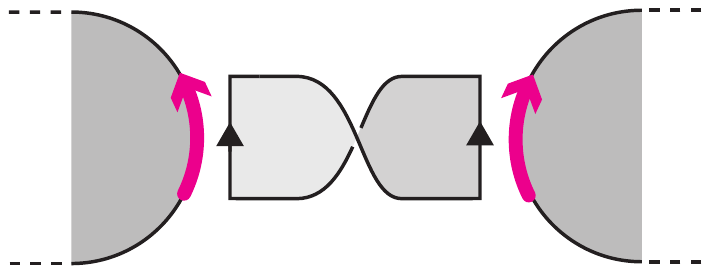} \hspace{4mm}
\raisebox{4mm}{\includegraphics[width=11mm]{arrow}}\hspace{4mm}
 \includegraphics[height=10mm]{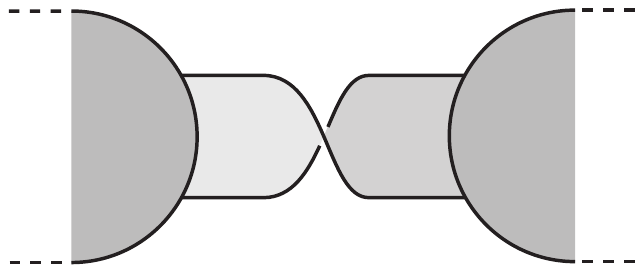} \; . \]

Conversely, to describe a ribbon graph $G$ as an arrow presentation, start by arbitrarily labelling and orienting the boundary of each edge disc of $G$.  On the arc where an edge disc intersects a vertex disc, place an arrow on the vertex disc, labelling the arrow with the label of the edge it meets and directing it consistently with the orientation of the edge disc boundary. The boundaries of the vertex set marked with these labelled arrows give the arrow marked circles of an arrow presentation.  See Figure~\ref{ribbon arrow} for an example, and \cite{Ch1} for further details.

\begin{figure}
\[ \raisebox{35mm}{\rotatebox{-90}{\includegraphics[width=35mm]{f1a}}}\quad
\raisebox{14mm}{\includegraphics[width=15mm]{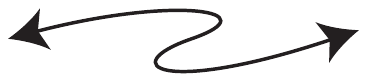}} \quad
 \includegraphics[height=35mm]{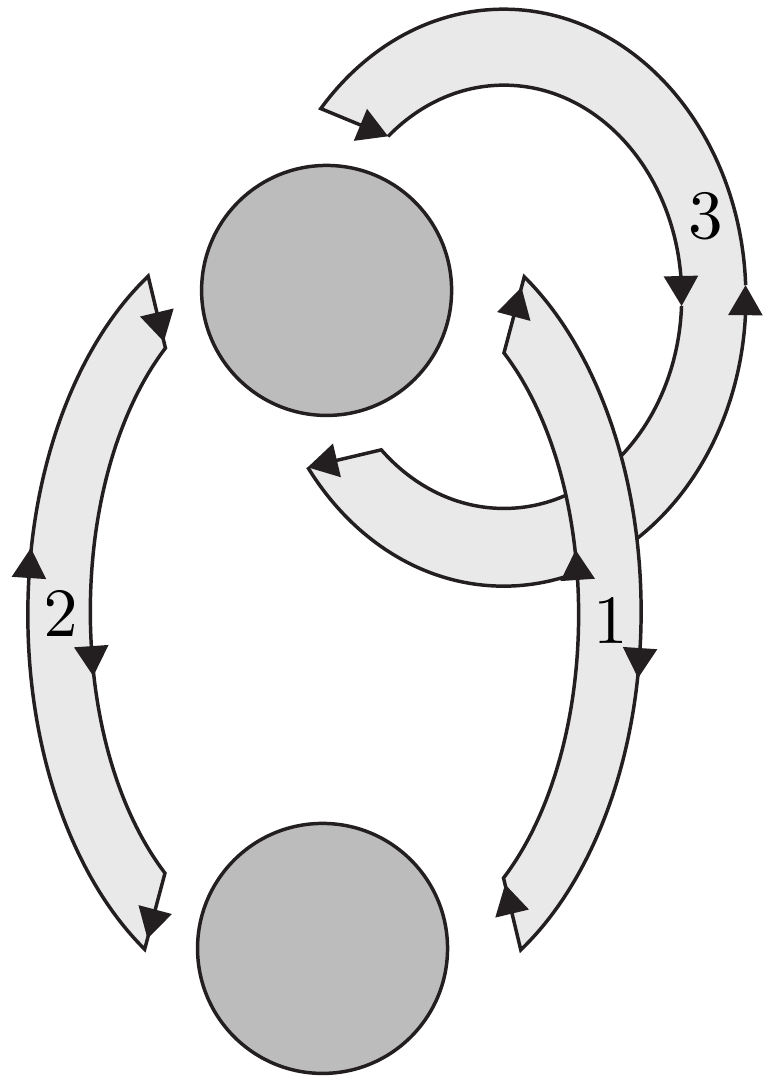} \quad
 \raisebox{14mm}{\includegraphics[width=15mm]{doublearrow}} \quad
\raisebox{1mm}{ \includegraphics[height=35mm]{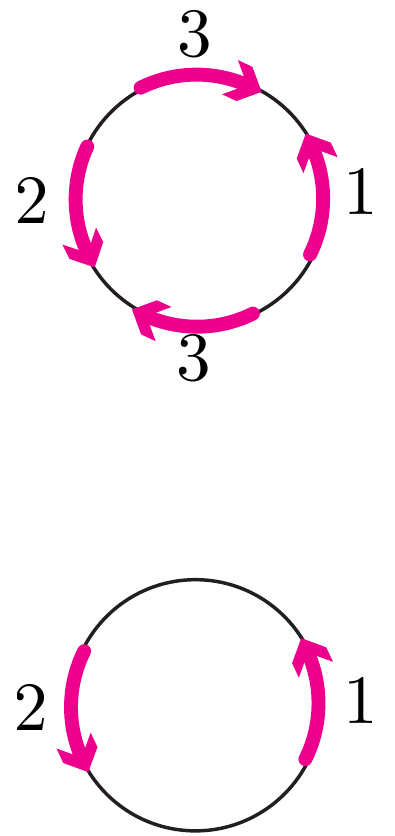}}\]
\caption{Equivalence of arrow presentations and ribbon graphs.}
\label{ribbon arrow}
\end{figure}

\subsection{Signed embedded graphs}
A ribbon graph or a cellularly embedded graph $G$ is said to be {\em signed} if it is equipped with a   mapping from its edge set $E(G)$ to $ \{+,-\}$ (so a sign ``$+$'' or ``$-$'' is assigned to each edge of $G$). An arrow presentation is said to be {\em signed} if there is a mapping  from the set of labels of the arrows to $\{+,-\}$. The equivalence between cellularly embedded graphs, ribbon graphs and arrow presentations clearly extends to their signed counterparts.

We will often want to consider embedded graphs that are obtained from signed embedded graphs by forgetting the signs. We will refer to such embedded graphs as being {\em unsigned}. At times we will also use the term ``unsigned'' to emphasize that a ribbon graph is not equipped with signs. This double use of the term should cause no confusion.

\subsection{Geometric duals}\label{ss.duals}

The construction of the {\em geometric dual} $G^*\subset \Sigma$ of a cellularly embedded 
graph, $G\subset \Sigma$, is well known: form $G^*$ by placing one 
vertex in each face of $G$ and embed an edge of $G^*$ between two 
vertices whenever the faces of $G$ they lie in are adjacent. 

If $G$ has $k$  components, $G_1,\ldots, G_k$,  and is cellularly embedded in a surface, then each component of the graph is cellularly embedded in a connected component of the surface, and therefore duality acts disjointly on  components of the graph:  $(G)^* = G_1^* \sqcup \cdots \sqcup G_k^*$.

There is a natural bijection between the edges of $G$ and the edges of $G^*$. We will use this bijection to identify the edges of $G$ and the edges of $G^*$.

Geometric duals have a particularly neat description in the language of ribbon graphs. If $G$ is a ribbon graph, then its geometric dual $G^*$ is obtained from $G$ in the following way: regard $G=(V(G), E(G))$ as a punctured surface. By filling in the punctures using a set of discs denoted $ V(G^*) $, we obtain a surface without boundary  $\Sigma$.  $G^*$  is then the ribbon graph with vertex set  $V(G^*)$  and edge set $E(G) $  obtained as $\Sigma \bs V(G)$. Observe that any decorations on the boundary of $G$ give decorations on the  boundary of $G^*$. We will use this fact later.

By definition, duality changes the sign of an edge in a signed ribbon graph. That is, if $G$ is a signed ribbon graph or signed embedded graph, with $A\subseteq E(G)$ its set of positive edges, and $A^c$ its set of negative edges, then $G^*$ is also signed, but $A\subseteq E(G^*)$ is its set of negative edges and $A^c\subseteq E(G^*)$ is its set of positive edges.

\section{The graphs of a link diagram}\label{s.gol}
 In this section we start by reviewing the construction of a Tait graph and its basic properties. 
We then describe the extension of Tait graphs to the ribbon graphs of a link diagram due to Turaev \cite{Tu97} and Dasbach, Futer, Kalfagianni, Lin and Stoltzfus \cite{Detal}. By considering the fact that the ribbon graphs of a link diagram extend Tait graphs, we are then led to ask in what ways the basic properties of Tait graphs extend to the more general ribbon graphs of a link diagram. 
\subsection{Tait graphs}\label{ss.tg}
Let $D\subset S^2$ be a link diagram. We  consider all link diagrams up to isotopy of $S^2$.
By a {\em checkerboard colouring} of $D$, we mean
the  assignment of  the colour black or white to each region of $D$ in such a way  that adjacent regions are assigned different colours. 
The {\em Tait sign}  of a crossing in a checkerboard coloured link diagram is an element of $\{+,-\}$ which is assigned to the crossing according to the following scheme:
\[ \begin{array}{ccc}
\raisebox{1mm}{\includegraphics[height=1cm]{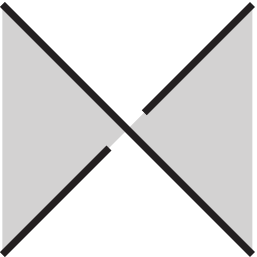}} & \quad& \raisebox{1mm}{\includegraphics[height=1cm]{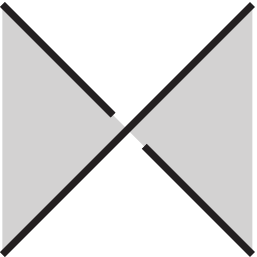}} \\
+ & & -
\end{array}.\]

 A {\em Tait  graph}, $\T(D)$, is a signed plane graph constructed from $D$ as follows: checkerboard colour the link diagram,  place a vertex in each black region and add an edge between two vertices whenever the corresponding regions of $D$ meet at a crossing; weight each edge of the graph with the Tait sign of the corresponding crossing. An example of a link diagram and its Tait graph is given in Figure~\ref{f.taitgraph}.

\begin{figure}
\[\includegraphics[width=3cm]{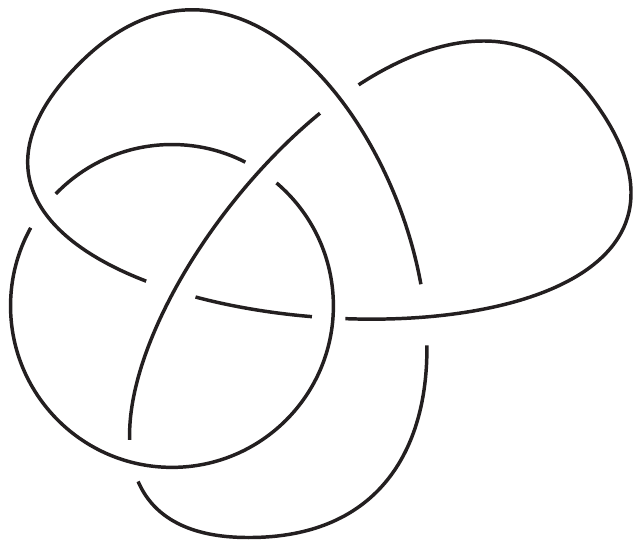} \raisebox{1cm}{\includegraphics[width=1cm]{arrow}} \quad
\includegraphics[width=3cm]{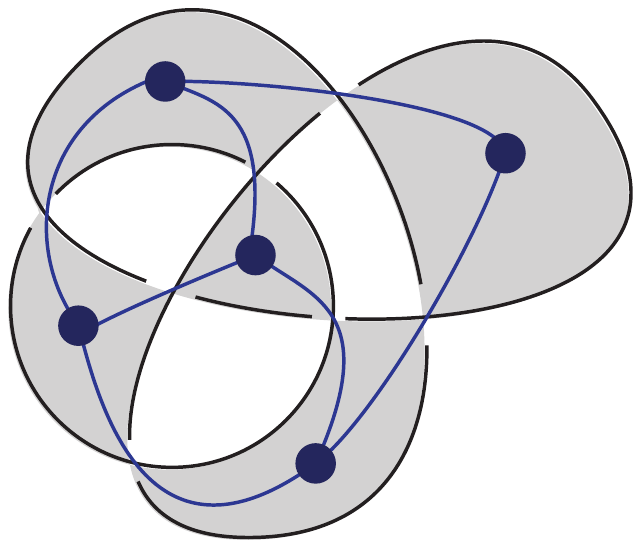}\quad
\raisebox{1cm}{\includegraphics[width=1cm]{arrow}}
\includegraphics[width=3cm]{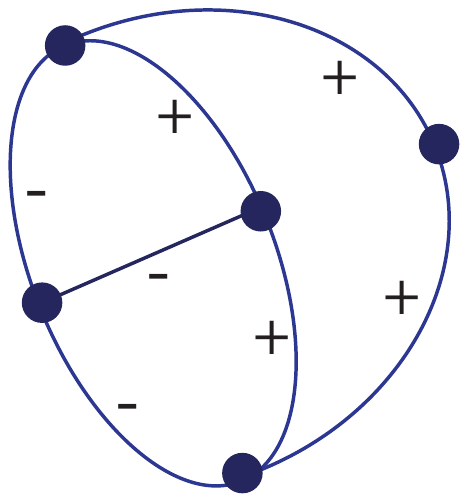}\]
\caption{Forming a Tait graph $\T(D)$ of a link diagram $D$. }
\label{f.taitgraph}
\end{figure}

Since there are two possible checkerboard colourings of  $D$, every diagram $D$ has exactly two  associated (signed) Tait graphs. 
The following facts about Tait graphs are well known and readily seen to be true.
\begin{enumerate}
\renewcommand{\theenumi}{T\arabic{enumi}}
\item  The two Tait graphs associated with a link diagram are geometric duals. 
\item  Every signed plane graph is the Tait graph of a link diagram.
\item  A Tait graph gives rise to a unique link diagram.
\end{enumerate}
For the first property, we recall that duality switches the sign of an edge.  For the third property, note that $D$ is recovered from $\T(D)$  by equipping the medial graph of $\T(D)$  with a crossing structure determined by  the Tait signs (this will be discussed in more detail in Subsection~\ref{ss.med}).

\subsection{The ribbon graphs of a link diagram}\label{ss.rgl}
In this subsection we describe  the extension  of  the concept of a Tait graph from \cite{Detal} (see also Turaev~\cite{Tu97}). In this construction a set of ribbon graphs, which includes the Tait graphs, is associated to a link diagram.

Let $D\subset S^2$ be a checkerboard coloured  link diagram.  Assign a unique label to each crossing of $D$. A {\em marked $A$-splicing} or a   {\em marked $B$-splicing} of a crossing $c$  is the replacement of the crossing with one of the schemes shown in Figure~\ref{f.spl}.

\begin{figure}
\begin{center}
\begin{tabular}{ccccc}
\includegraphics[height=1.2cm]{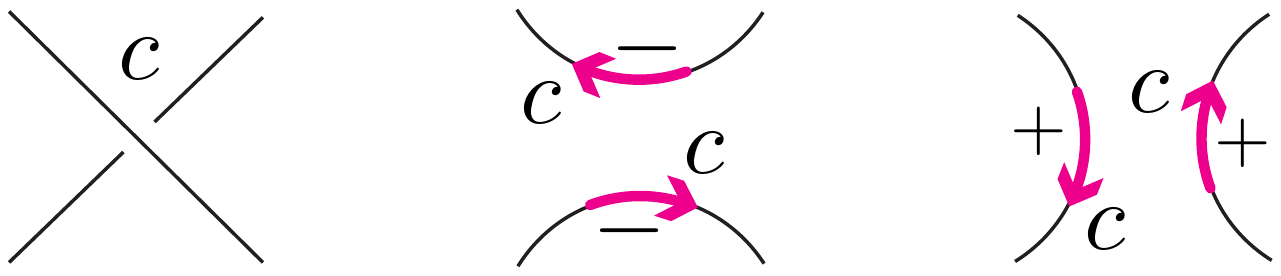}  & & \includegraphics[height=1.2cm]{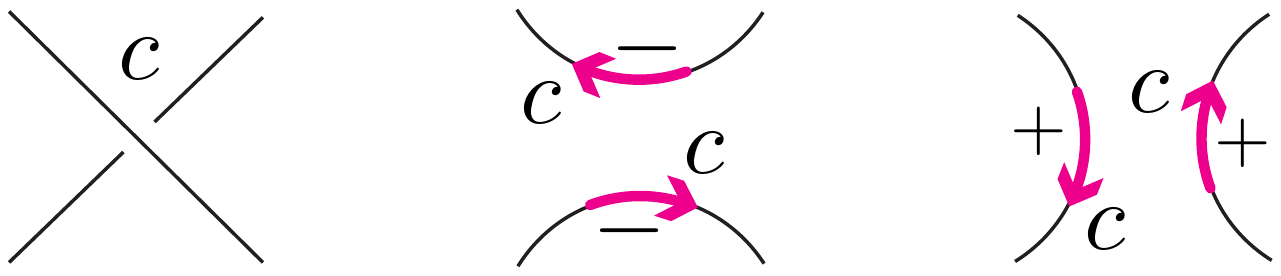} &&
\includegraphics[height=1.2cm]{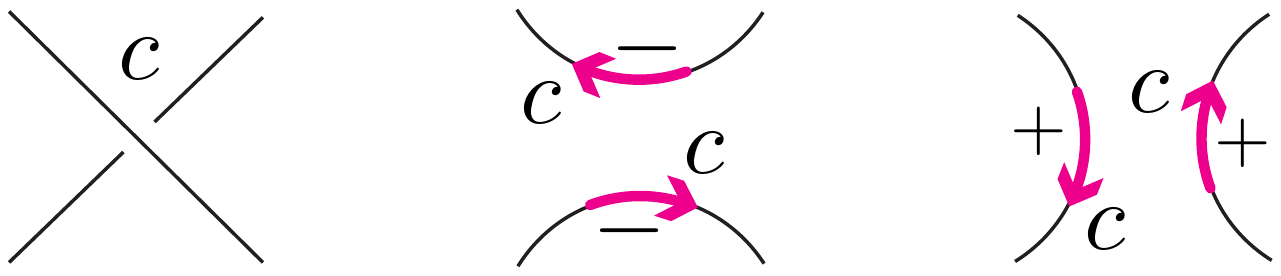} \\
 A crossing in $D$. &&  A marked $A$-splicing. && A marked $B$-splicing.
\end{tabular}
\end{center}
\caption{Splicing a crossing in a link diagram.}
\label{f.spl} 
\end{figure}


Notice that we decorate the two arcs in the splicing with signed 
labelled arrows that are chosen to be consistent with an arbitrary 
orientation of the sphere $S^2$. The labels of the arrows are 
determined by the label of the crossing, and the signs are 
determined by the choice of splicing.

A {\em state}, $\sigma$, of a link diagram is an assignment  of a marked $A$- or $B$-splicing to each crossing. Observe that a state is precisely an arrow presentation of a ribbon graph. We will  denote the ribbon graph corresponding to the state $\sigma$ of $D$ by $G(D,\sigma)$. These ribbon graphs are the ribbon graphs of a link diagram:
\begin{definition}[Dasbach et al. \cite{Detal}, Turaev \cite{Tu97}]
Let $D$ be a link diagram. Then the {\em set $ \G_s(D)$ of signed ribbon graphs associated with $D$} is defined by 
\[  \G_s(D) :=  \{ G(D,\sigma) \, | \,  \sigma \text{ is a marked state of } D   \}.  \]   
If $G\in \G_s(D)$, then we say that $G$ is a {\em signed ribbon graph of $D$}, and we  will also say that $G$ {\em presents} $D$.
\end{definition}
A example of a ribbon graph $G(D,\sigma)$ for a state $\sigma$ of a link diagram $D$ is given in Figure~\ref{f.rgoflink}.
 In this figure, for clarity, lines are used to emphasize the type of splicing, and the labelled arrows are omitted.

The (unsigned) ribbon graphs from \cite{Detal} are obtained by forgetting the signs in the constructions above. We  let $ \G(D)$ denote the set of ribbon graphs obtained from $ \G_s(D)$ by forgetting the signs, and call $G\in \G(D)$ a {\em ribbon graph of $D$}. For many of the questions we are interested in here, the signs of a ribbon graph of a link diagram are irrelevant, and so we work with the unsigned ribbon graphs.

\begin{figure}
\begin{center}
\begin{tabular}{ccc}
\raisebox{2mm}{\includegraphics[width=4cm]{l1}} &\raisebox{0cm}{\includegraphics[width=4cm]{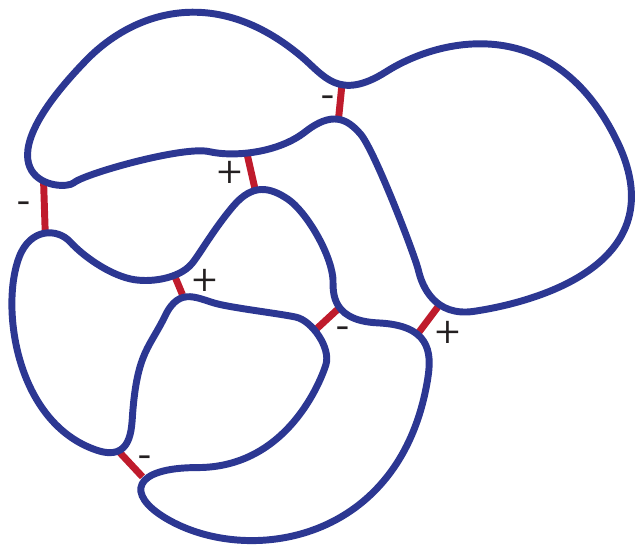}} &
\raisebox{-10mm}{\includegraphics[height=5cm]{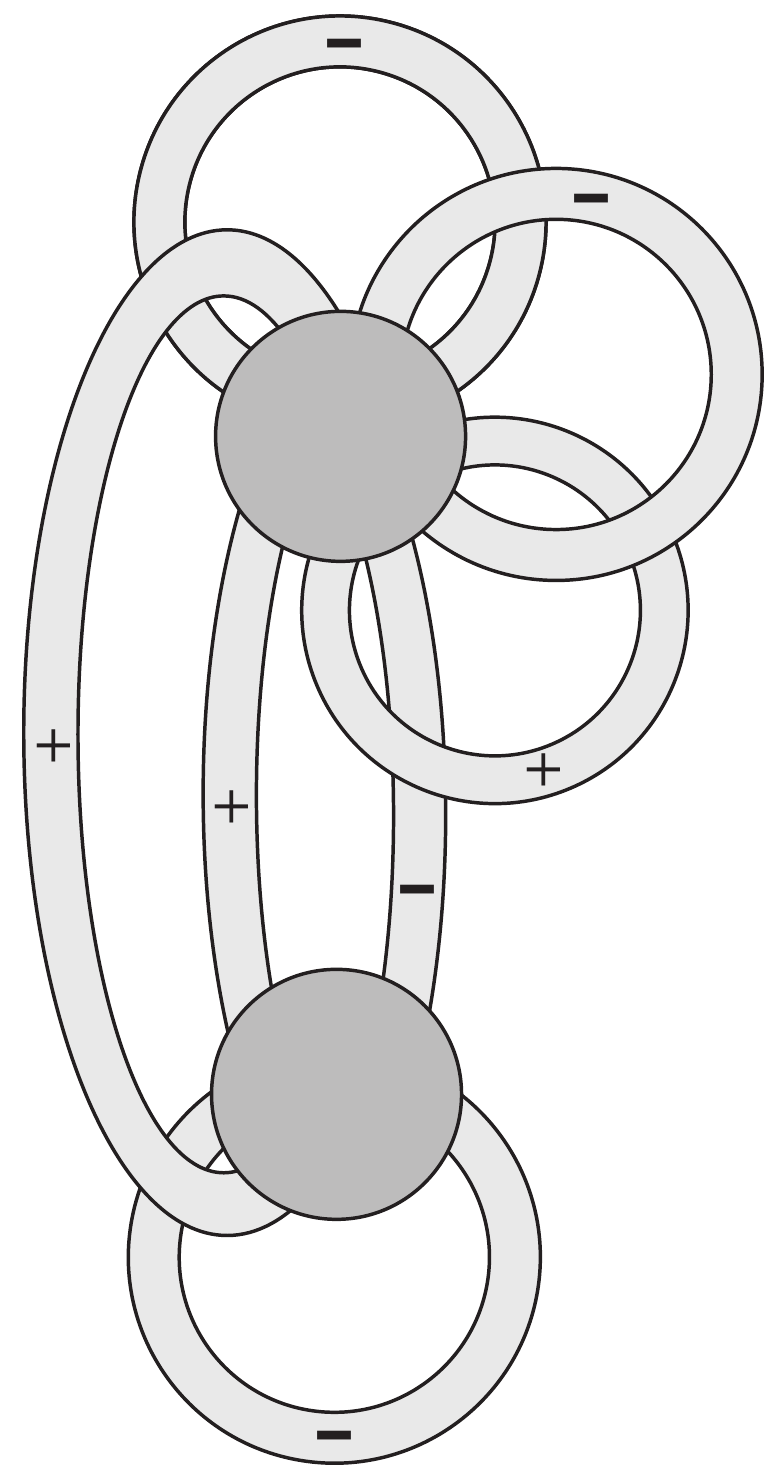}}
\\
A link diagram $D$. &A state $\sigma$ of $D$. & The ribbon graph $G(D,\sigma)$.
\end{tabular}
\end{center}
\caption{A link diagram $D$, a state $\sigma$ of  $D$ and the corresponding ribbon graph $G(D,\sigma)$. }
\label{f.rgoflink}
\end{figure}

The following proposition  states that the ribbon graphs of a link diagram do indeed generalize  Tait graphs.
\begin{proposition}[Dasbach et al. \cite{Detal}]\label{p.cb}
 Let $D$ be a link diagram, then both of the  Tait graphs of $D$  are  in $\G_s(D)$. 
\end{proposition}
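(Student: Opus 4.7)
The plan is to exhibit, for each of the two checkerboard colourings of $D$, an explicit state $\sigma$ whose ribbon graph $G(D,\sigma)$ coincides as a signed ribbon graph with the Tait graph associated with that colouring. Fix a checkerboard colouring of $D$. At each crossing $c$, the four local corners alternate in colour; there is a unique choice of splicing (either $A$ or $B$) for which the two arcs of the splicing lie in the two white corners of $c$, thereby leaving the two black corners glued together across $c$. Let $\sigma$ be the state obtained by making this choice at every crossing.

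Next I would identify the ribbon graph $G(D,\sigma)$ combinatorially. Because the splicing at every crossing is performed through the white corners, the state circles of $\sigma$ are precisely the boundaries of the black regions of $D$: each black region contributes one circle, which will become a vertex disc of $G(D,\sigma)$. Each crossing then contributes an edge-disc attached along its two marking arrows to the vertex discs for the two black regions that meet at that crossing. Thus, ignoring signs, the underlying graph of $G(D,\sigma)$ has a vertex for every black region and an edge for every crossing joining the two incident black vertices, which is exactly the construction of $\T(D)$ for this colouring. A quick Euler characteristic check, using that the faces of $G(D,\sigma)$ correspond bijectively to the white regions of $D$, confirms that $G(D,\sigma)$ is plane, so that $G(D,\sigma)$ and $\T(D)$ agree as unsigned plane graphs.

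It then remains to verify that the signs agree. The sign of an edge of $G(D,\sigma)$ is the sign attached to the corresponding marking arrows in Figure~\ref{f.spl}, which is determined by whether the splicing at that crossing is of type $A$ or $B$. On the other hand, the Tait sign of the crossing is determined by the crossing type together with the checkerboard colouring as in the table in Subsection~\ref{ss.tg}. At each crossing there are four local configurations to inspect (two crossing types times two placements of the black corners), and in each configuration the splicing that is forced by our rule (arcs through the white corners) carries precisely the sign prescribed by the Tait convention. This verification is local and the sign conventions are set up so that the two assignments match.

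The second Tait graph of $D$ is produced by running the same construction with the roles of black and white reversed: at each crossing this swaps the choice of splicing, so one obtains the state $\sigma'$ whose splicings pass through the (new) white (= old black) corners, and the analogous argument shows that $G(D,\sigma')$ is the other Tait graph. The main obstacle in carrying out this proof is the bookkeeping in the sign check, which amounts to confirming that the convention used to decorate the splicings in Figure~\ref{f.spl} with signs is compatible with the Tait sign convention; everything else is an immediate translation between the two constructions.
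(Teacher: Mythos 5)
Your overall strategy is exactly the paper's: exhibit, for each checkerboard colouring, an explicit state whose arrow presentation is the corresponding Tait graph, and observe that swapping the colours swaps the state. However, your local rule picks the \emph{wrong} smoothing, and the central identification built on it is false. If the two arcs of the splicing lie in (hug) the two white corners, so that the two black corners are glued together across the crossing, then distinct black regions \emph{merge} through the crossings; the state circles are then the boundaries of the white regions, not of the black ones. Your claim ``each black region contributes one circle'' fails already for the standard trefoil diagram with the three lobes coloured black: your rule glues the three black lobes into an annulus, the state has only two circles (bounding the central and outer white regions), and $G(D,\sigma)$ is the $2$-vertex, $3$-edge dipole --- i.e.\ the \emph{white} Tait graph --- whereas the black Tait graph is the triangle on three vertices. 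The correct rule, as in the paper's proof, is the opposite one: to obtain the Tait graph with a vertex in each black region, choose at each crossing the splicing whose arcs follow the black faces (equivalently, a $B$-splicing at each positive crossing and an $A$-splicing at each negative crossing), leaving the \emph{white} corners glued together; then the state circles really are the black region boundaries and the edge discs at the crossings give $\T(D)$.

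The slip also infects the one step you flag as delicate but do not carry out, the sign check: the Tait sign convention is keyed to the placement of the black corners, so matching the black-region Tait signs against the state whose circles bound white regions would not verify. With the corrected rule the bookkeeping is immediate from the conventions in Figure~\ref{f.spl} and the remark following Proposition~\ref{p.cb} ($B$-splicings carry $+$, $A$-splicings carry $-$), with no case analysis needed. Note that your final conclusion happens to survive, since the pair of states you construct (your rule and its colour-swap) is the same pair the paper uses, just with the two Tait graphs mislabelled; but as written the proof asserts a false bijection between state circles and black regions, so the argument needs the smoothing choice inverted before it is correct.
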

\begin{proof}
Given  a checkerboard colouring of the diagram, we can recover one of the Tait graphs by, at each crossing, choosing the splicing in which the arcs follow the black faces of the diagram (i.e.  take a $B$-splicing at each positive crossing, and an $A$-splicing at each negative crossing). The other Tait graph is recovered by following the white faces (i.e. by taking  an $A$-splicing at each positive crossing, and a $B$-splicing at each negative crossing).
\end{proof}

\begin{remark}
For each diagram $D$, $\G_s(D)$ contains exactly two elements where all of the edges are of a single sign, either $+$ or $-$. These ribbon graphs are obtained by choosing the state that consists wholly of $B$-splicings, or wholly of $A$-splicings, respectively. If the sign on every edge of a ribbon graph is equal, then we may unsign it without losing any information. 
The all-$B$ and all-$A$ ribbon graphs from \cite{Detal} can be obtained from these two ribbon graphs by forgetting the signs.
These unsigned ribbon graphs  play a key role in the applications of the ribbon graphs of a link diagram to knot theory.
\end{remark}

\subsection{Three questions}
Our motivation comes from the  Properties~\ref{t1}-\ref{t3} of Tait graphs. We are interested in how these fundamental properties extend to the more general  ribbon graphs of a link diagram. 
 In particular we ask and answer the following three questions about the ribbon graphs of link diagrams. These questions are the natural extensions of Properties~\ref{t1}-\ref{t3} of Tait graphs.
\begin{enumerate}
\renewcommand{\theenumi}{Q\arabic{enumi}}
\item  How are the ribbon graphs of a link diagram related to each other?
\item   Which ribbon graphs arise as ribbon graphs of a link diagram?
\item  What is the relationship between link diagrams that are presented by  the same set signed ribbon graphs?  
\end{enumerate}
The answer to Question~\ref{q1} is known and the relation between the ribbon graphs of a link diagram is described in the following section.

Questions~\ref{q2} and \ref{q3} are more interesting than the analogous questions for Tait graphs, which are answered by \ref{t2} and \ref{t3}. For example, by considering all three crossing link diagrams, one can quickly verify that the ribbon graph shown on the right-hand side of Figure~\ref{f.e1sum} does not arise as the ribbon graph of a link diagram. Also, with a little more work, one can find examples of distinct link diagrams that are presented by the same ribbon graphs.

\section{A framework for answering Questions \ref{q1}-\ref{q3} }\label{s.pd}

Partial duality, introduced by Chmutov in \cite{Ch1}, is an extension of geometric   duality. 
Our interest in partial duality here lies in the fact that it provides a graph theoretical framework for addressing Questions~\ref{q1}-\ref{q3}. In this section we describe partial duality,  use it to answer Question \ref{q1}, and then to reformulate \ref{q2} and \ref{q3}.

\subsection{Partial duality}\label{ss.pd}
Loosely speaking, a partial dual is obtained by forming the geometric dual of an embedded graph only at a subset of its edges. Formally:

\begin{definition}[Chmutov \cite{Ch1}]\label{d.pd}
Let $G$ be a  ribbon graph and $A\subseteq E(G)$. Arbitrarily orient and label each of the edges of $G$. (The orientation need not extend to an orientation of the ribbon graph). The boundary components of the spanning ribbon subgraph $(V(G), A)$ of $G$ meet the edges of $G$ in disjoint arcs (where the spanning ribbon subgraph is naturally embedded in $G$). On each of these arcs, place an arrow which points in the direction of the orientation of the edge boundary  and is labelled by the edge it meets. 
The resulting marked boundary components of the spanning ribbon subgraph $(V(G), A)$ define an  arrow presentation. 
The  ribbon graph corresponding to this  arrow presentation is the {\em partial dual} $G^A$ of $G$.

If $G$ is a signed ribbon graph, then $G^A$ is also a signed ribbon graph whose signs obtained from $G$ by switching the sign of every edge in $A$. That is, if $e$ is an edge of $G$ with sign $\varepsilon$, then the corresponding edge in $G^A$ has sign $-\varepsilon$ if $e\in A$, and $\varepsilon$ if $e\notin A$.
\end{definition}

\begin{example}\label{ex.dex1} 
A  ribbon graph $G$ equipped with an arbitrary labelling and orientation of its edges is shown as Step 1 of Figure~\ref{f.e42}. For  this example we take $A=\{2,3\}$. The marked spanning ribbon subgraph $(V(G),A)$ is shown as Step 2 of Figure~\ref{f.e42}. The boundary components of this spanning ribbon subgraph define an arrow presentation, shown as Step 3. The corresponding  ribbon graph is shown as Step 4. This is the partial dual $G^{\{2,3\}}$ of $G$.


\begin{figure}
 \begin{center}
\begin{tabular}{ccc}
 \includegraphics[width=5cm]{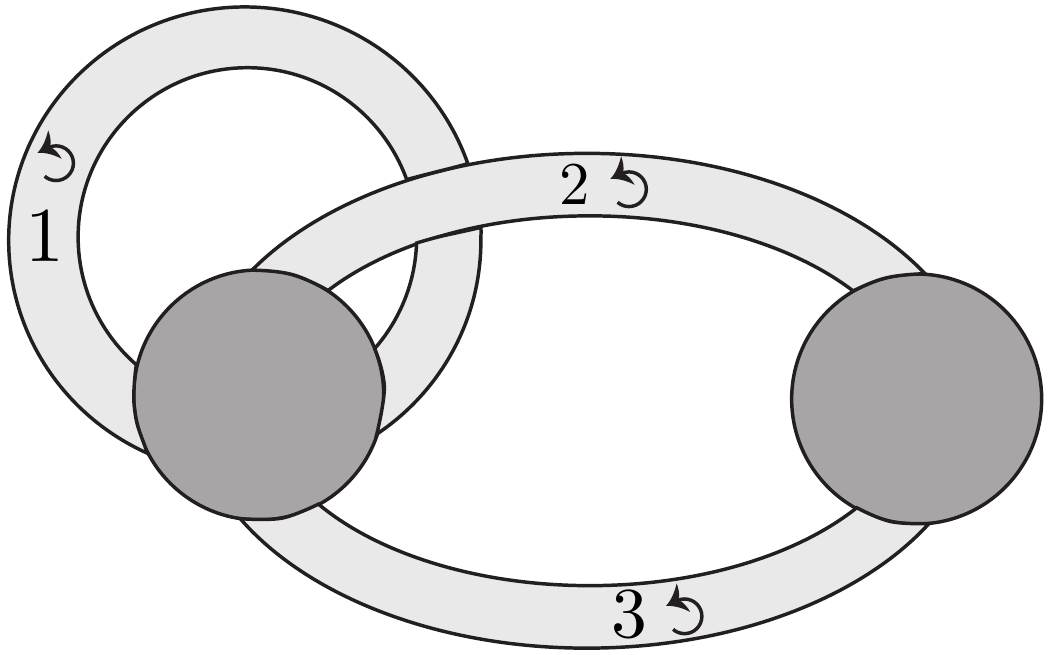}  & \hspace{1cm} & \includegraphics[width=5cm]{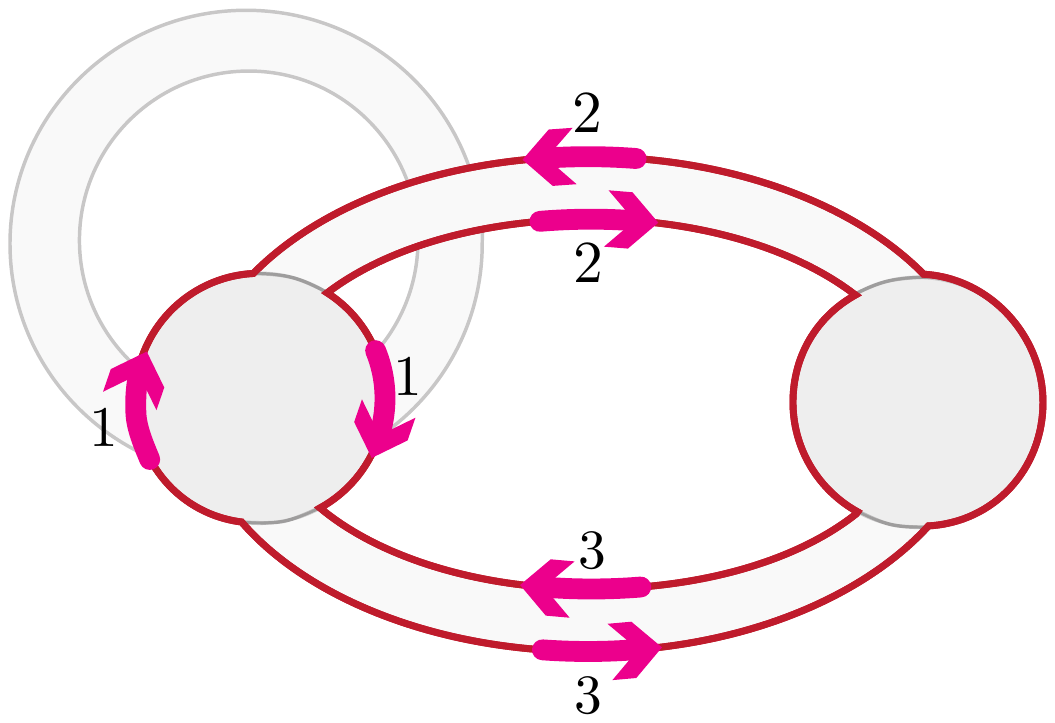}   \\
Step 1.  & &Step 2. \ 
 \end{tabular}
 \end{center}

 \begin{center}
\begin{tabular}{ccc}
\includegraphics[width=5cm]{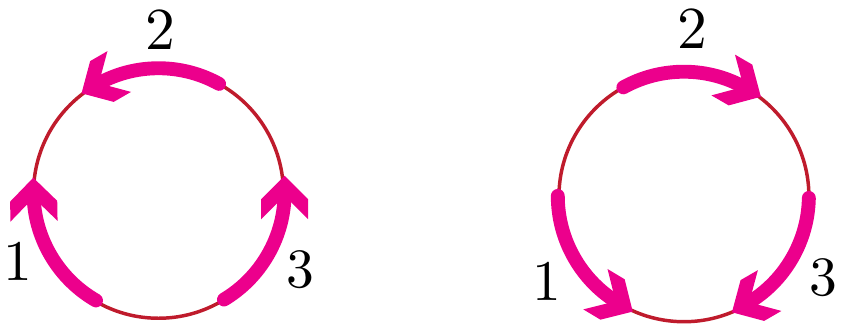} & \hspace{1cm} & \includegraphics[width=5cm]{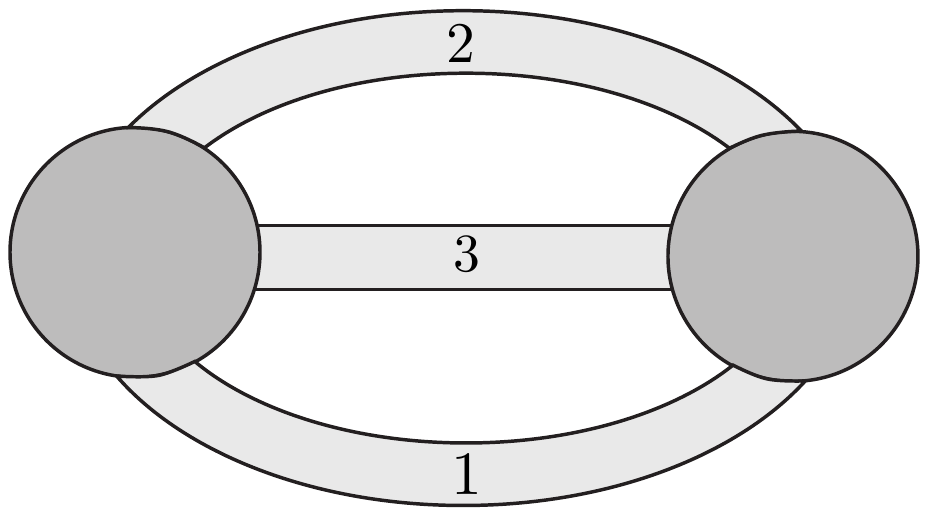}      \\
 Step 3.& & Step  4.
\end{tabular}
\end{center}
\caption{Forming the partial dual of a ribbon graph as in Example~\ref{ex.dex1}.}
\label{f.e42}
\end{figure}

\end{example}

Additional examples of partially dual graphs can be found, for example,  in \cite{Ch1, Mo3, Mo4, VT}.

We will use the following basic properties of partial duals.
\begin{proposition}[Chmutov~\cite{Ch1}]\label{p.pd2}
Let $G$ be a ribbon graph and $A, B\subseteq E(G)$.  Then 
\begin{enumerate}
\item \label{p.pd2.1} $G^{\emptyset}=G$;
\item \label{p.pd2.2}  $G^{E(G)}=G^*$, where $G^*$ is the geometric dual of $G$;
\item \label{p.pd2.3} $(G^A)^B=G^{A\Delta B}$, where $A\Delta B := (A\cup B)\backslash  (A\cap B)$ is the symmetric difference of $A$ and $B$;
\item partial duality acts disjointly on components, i.e. $ (P\sqcup Q)^A = (P^{A \cap E(P)} ) \sqcup (Q^{A \cap E(Q)})$.
\end{enumerate}
\end{proposition}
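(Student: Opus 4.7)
The plan is to dispatch items (1), (2), and (4) by direct unpacking of Definition~\ref{d.pd}, reserving the bulk of the effort for item (3). For (1), with $A = \emptyset$ the spanning ribbon subgraph $(V(G), A)$ is just the disjoint union of the vertex discs of $G$, and the arrows placed on the arcs where each edge meets a vertex simply reproduce the standard arrow presentation of $G$. For (2), with $A = E(G)$ the spanning ribbon subgraph is all of $G$, so its boundary circles are precisely those filled in to form the vertex discs of $G^*$ in the ribbon-graph description of geometric duality reviewed in Subsection~\ref{ss.duals}; tracking the marked arrows then exhibits the resulting arrow presentation as $G^*$. For (4), I would use the decomposition
\[
(V(G), A) = (V(P), A \cap E(P)) \sqcup (V(Q), A \cap E(Q))
\]
when $G = P \sqcup Q$, since the boundary components, their marked arrows, and the reconstructed ribbon graph all split over the disjoint union, giving the claim at once.

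For (3), I expect a two-step reduction to single-edge partial duals. First, I would verify that partial duality with respect to a single edge is an involution, $(G^{\{e\}})^{\{e\}} = G$, by chasing a local neighbourhood of $e$ and its endpoints through the construction twice and recovering the original edge--vertex attachment. Next, I would check that single-edge partial duals with respect to distinct edges commute, $(G^{\{e_1\}})^{\{e_2\}} = (G^{\{e_2\}})^{\{e_1\}}$, again by a local argument since the arcs and arrows for $e_1$ and $e_2$ interact only on the vertex discs they share. Granted these, a straightforward induction on $|B|$ applied to the iterated expression $G^A = \bigl(\cdots\bigl((G^{\{e_1\}})^{\{e_2\}}\bigr)\cdots\bigr)^{\{e_k\}}$ yields $(G^A)^B = G^{A \Delta B}$.

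The main obstacle is the local book-keeping that underlies (3). Because partial duality reroutes edge--vertex attachments in a topologically nontrivial way, verifying involutivity at a single edge and commutativity across a pair of distinct edges requires careful inspection of the arrow presentation near each edge, especially in the cases where $e_1$ and $e_2$ share an endpoint or where $e$ is a loop. These are the configurations where the new boundary components of $(V(G), A)$ interact in several places, and they are where I expect the verification to be most intricate; once they are dispatched, the symmetric-difference formula and its three corollaries are all essentially formal.
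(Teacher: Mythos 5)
Your treatments of items (1), (2) and (4) are fine and essentially forced by Definition~\ref{d.pd}; note that the paper itself gives no proof of this proposition (it is quoted from Chmutov~\cite{Ch1}), and these three items do follow by direct unpacking exactly as you describe.

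The genuine gap is in item (3). Your induction is run on the expression $G^A = \bigl(\cdots((G^{\{e_1\}})^{\{e_2\}})\cdots\bigr)^{\{e_k\}}$, but this identity is not available to you: Definition~\ref{d.pd} constructs $G^A$ in a single step from the boundary of the spanning ribbon subgraph $(V(G),A)$, and the claim that this agrees with iterated single-edge duals is precisely the special case $(G^{A'})^{\{e\}} = G^{A'\Delta\{e\}}$ of the statement you are trying to prove. The two lemmas you plan to establish --- involutivity $(G^{\{e\}})^{\{e\}}=G$ and commutativity $(G^{\{e_1\}})^{\{e_2\}}=(G^{\{e_2\}})^{\{e_1\}}$ --- concern only compositions of single-edge operations and cannot bridge the set-based definition to the iterated one; an operation could satisfy both and still fail $G^{\{e_1,e_2\}}=(G^{\{e_1\}})^{\{e_2\}}$. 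Indeed, the paper treats ``partial duals may be formed one edge at a time'' as a \emph{consequence} of item (3) (see the remark immediately following the proposition), not as part of the definition, so assuming it here is circular. The repair is to replace your two lemmas by one: prove directly from Definition~\ref{d.pd} that $(G^{A'})^{\{e\}}=G^{A'\Delta\{e\}}$ for every $A'\subseteq E(G)$ and every edge $e$, by comparing the boundaries of $(V(G),A')$ and $(V(G),A'\Delta\{e\})$, which differ only in a neighbourhood of the edge disc of $e$ --- this is exactly the local move on arrow presentations that the paper displays in its proof of Proposition~\ref{p.pd3}. With that lemma in hand, induction on $|B|$ yields $(G^A)^B=G^{A\Delta B}$ at once, and your involutivity and commutativity statements drop out as corollaries rather than serving as inputs.
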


It is worthwhile noting that Item~\ref{p.pd2.3} allows for partial duals to be formed one edge at a time.

For a set $X$ and an element $x$, we write $X\backslash x$  for $X\backslash \{x\}$.

\subsection{Partial duals and the graphs of links:  solution to Question~\ref{q1}}

Tait graphs are geometric duals of one another (Property~\ref{t1}). The following proposition, which appeared  in  \cite{Ch1} and implicitly in \cite{Mo2}, states that all of the ribbon graphs in $\G_s(D)$ are partial duals of  one another. In particular, since Tait graphs are in $ \G_s(D)$, the ribbon graphs of a link diagram are all partial duals of  its Tait graphs. 

\begin{proposition}[Chmutov \cite{Ch1}]\label{p.pd3} Let $D$ be a link diagram. Then a signed ribbon graph 
$G$ presents $D$ (that is, $G\in \G_s(D)$) if and only if $G$ is a partial dual of a Tait graph of $D$.
\end{proposition}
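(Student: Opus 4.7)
The plan is to set up a natural bijection between states of $D$ and subsets of $E(\T(D))$, and to show that the ribbon graph arising from a state is precisely the partial dual of a Tait graph with respect to the corresponding subset. The proposition then follows immediately.

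First I would fix a checkerboard colouring of $D$ and let $\T := \T(D)$ be the associated Tait graph. By Proposition~\ref{p.cb}, $\T = G(D,\sigma_0)$ for the state $\sigma_0$ that takes the $B$-splicing at every positive crossing and the $A$-splicing at every negative crossing (i.e.\ the state that follows the black faces). Since the edges of $\T$ are in canonical bijection with the crossings of $D$, for each $A\subseteq E(\T)$ we may define $\sigma_A$ to be the state that disagrees with $\sigma_0$ precisely at the crossings in $A$. This is a bijection between subsets of $E(\T)$ and the set of marked states of $D$, so $\G_s(D)=\{G(D,\sigma_A):A\subseteq E(\T)\}$.

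The central claim is then that $G(D,\sigma_A)=\T^A$ as signed ribbon graphs for every $A\subseteq E(\T)$. I would prove this by induction on $|A|$. The base case $A=\emptyset$ is Proposition~\ref{p.cb} together with Proposition~\ref{p.pd2}(\ref{p.pd2.1}). For the inductive step, Item~\ref{p.pd2.3} of Proposition~\ref{p.pd2}, which allows partial duals to be formed one edge at a time, reduces the claim to the following local statement: for any state $\sigma$ and crossing $e$, if $\sigma'$ denotes the state obtained from $\sigma$ by switching the splicing at $e$, then $G(D,\sigma')=G(D,\sigma)^{\{e\}}$. This is most naturally verified in the arrow-presentation formalism of Definition~\ref{d.pd}: the two splicings at $e$ differ by a local rotation that exactly matches the rerouting of the boundary of the spanning ribbon subgraph $(V(G(D,\sigma)),\{e\})$ around the ribbon-disc $e$ prescribed by partial duality. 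A sign check is likewise local, since partial duality at $e$ flips the sign of $e$, and switching the splicing at the crossing $e$ also flips the sign that the state assigns to the resulting edge.

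Granted the main claim, the proposition is immediate. If $G\in\G_s(D)$ then $G=G(D,\sigma_A)=\T^A$ for some $A$, hence $G$ is a partial dual of $\T$; conversely any partial dual $\T^A$ equals $G(D,\sigma_A)\in\G_s(D)$. The other Tait graph of $D$ is $\T^{E(\T)}$ by Proposition~\ref{p.pd2}(\ref{p.pd2.2}), so ``partial dual of a Tait graph of $D$'' does not depend on which Tait graph is chosen. The main obstacle is the local verification underlying the inductive step: the ribbon-disc representing the edge $e$ attaches to the rest of $G(D,\sigma)$ along two arcs at the crossing $e$ of $D$, and after switching splicings one must check that the boundary of $(V,\{e\})$ meets the surviving arrows in exactly the pattern dictated by the new splicing, with matching orientations. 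Once a single crossing is drawn in isolation this is a finite picture check, but it is the only genuinely geometric step in the argument.
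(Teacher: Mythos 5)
Your proposal is correct and follows essentially the same route as the paper: the paper's proof consists precisely of your key local step, namely that taking the partial dual at a single edge changes the arrow presentation in exactly the way that switching between the $A$- and $B$-splicings at the corresponding crossing does, with the bookkeeping (the bijection between states and subsets of $E(\T(D))$, the one-edge-at-a-time induction via Proposition~\ref{p.pd2}(\ref{p.pd2.3}), the base case from Proposition~\ref{p.cb}, and the sign check) left implicit. Your write-up simply makes that scaffolding explicit.
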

\begin{proof}
Taking the partial dual with respect to an edge $e$ 
changes an arrow presentation as follows:
\[  \raisebox{-5mm}{   \includegraphics[height=1cm]{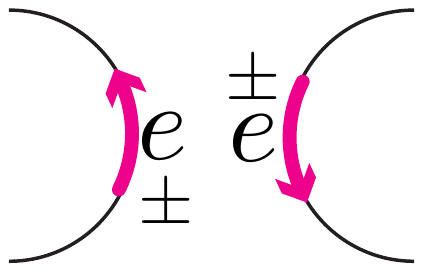}}  \raisebox{-1mm}{\includegraphics[width=1cm]{arrow}}    \raisebox{-5mm}{\includegraphics[height=1cm]{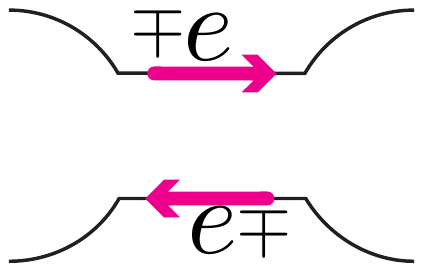}} \;\raisebox{-5mm}{.}\]
This is readily seen to correspond to the change in the state of a link diagram caused by switching between $A$- and $B$-splicings at the crossing corresponding to $e$ as in Figure~\ref{f.spl}.
\end{proof}

The following corollary provides a reformulation our motivating questions \ref{q1}-\ref{q3} in terms of the graph theoretical language of partial duals of plane graphs. It is in this language that we work to answer the outstanding questions, \ref{q2} and \ref{q3}.
\begin{corollary}\label{c.pd6}
$G$ is the (signed or unsigned) ribbon graph of a link diagram if and only if it is a partial dual of a (signed or unsigned) plane graph.
\end{corollary}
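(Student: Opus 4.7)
My plan is to derive this corollary directly from Proposition~\ref{p.pd3} combined with Property~\ref{t2} of Tait graphs. Both directions are essentially a translation between two equivalent statements once we observe that a Tait graph is, by construction, a signed plane graph and, conversely, every signed plane graph is a Tait graph of some link diagram.

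For the forward direction, suppose $G$ is a signed ribbon graph of a link diagram $D$, so $G\in\G_s(D)$. Proposition~\ref{p.pd3} says $G$ is a partial dual of a Tait graph $\T(D)$, and $\T(D)$ is a signed plane graph by definition. Hence $G$ is a partial dual of a signed plane graph. For the reverse direction, suppose $G$ is a partial dual of a signed plane graph $H$. Property~\ref{t2} tells us there is a link diagram $D$ with $\T(D)=H$, and then Proposition~\ref{p.pd3} gives $G\in\G_s(D)$, so $G$ presents $D$.

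For the unsigned case, the argument is identical after we observe that signs can be added or discarded freely at both ends of the argument. Given an unsigned ribbon graph $G$ in $\G(D)$, lift it to a signed ribbon graph in $\G_s(D)$ and apply the signed version to get a signed plane graph whose unsigning is a plane partial dual of $G$; conversely, given an unsigned plane partial dual $H$ of $G$, equip $H$ with any signing, apply the signed version to produce a link diagram $D$ with $G\in\G_s(D)$, and then forget the signs.

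There is no real obstacle here: the content is all in Proposition~\ref{p.pd3} and Property~\ref{t2}, and the corollary is simply the observation that the set of Tait graphs coincides with the set of signed plane graphs, so ``partial dual of a Tait graph'' and ``partial dual of a plane graph'' describe the same class of ribbon graphs. The only point worth stating carefully is the interaction with signs, which is handled by the remark that partial duality commutes with the sign-switching conventions of Definition~\ref{d.pd} and by the fact that forgetting signs is compatible with taking partial duals.
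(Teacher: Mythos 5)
Your proof is correct and follows the same route as the paper, which simply cites Proposition~\ref{p.pd3} together with Property~\ref{t2}; your two directions and the sign-lifting/forgetting remarks just make explicit what the paper leaves as ``follows immediately.''
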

\begin{proof}
The corollary follows immediately from Proposition~\ref{p.pd3} and Property~\ref{t2}. 
\end{proof}

\subsection{Partial duals of plane graphs: a graph theoretical framework for~\ref{q2} and~\ref{q3}}\label{ss.med}

Having described a solution to \ref{q1}, we turn out attention to  \ref{q2} and~\ref{q3}. 

Corollary~\ref{c.pd6} provides the graph theoretical formulation of question~\ref{q2} in which we will answer it.
\begin{enumerate}
\setcounter{enumi}{1}
\renewcommand{\theenumi}{Q\arabic{enumi}${}^{\prime}$}
\item  \label{q2'} Which ribbon graphs are the partial duals of plane graphs?  
\end{enumerate}

Having found a workable reformulation of \ref{q2}, we turn our attention to \ref{q3}. In order to tackle this question we need to understand how, if we are given a signed ribbon graph $G$,  we can recover the link diagrams that have $G$ as their signed ribbon graph.   We  begin by describing how this is done for plane ribbon graphs. 

Let $G$ be a signed plane ribbon graph. We can construct a link diagram  on $G$ by drawing the following configuration on each of its edges, and connecting the configurations by following the boundaries of the vertices of $G$:
\[    \includegraphics[height=1cm]{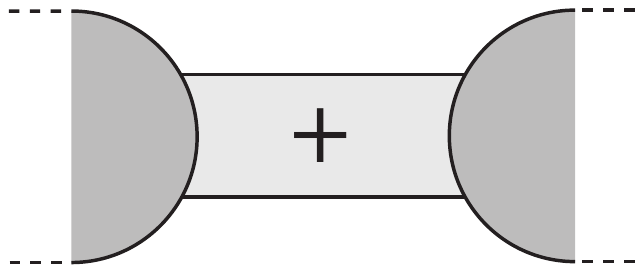}  \raisebox{4mm}{\includegraphics[width=1cm]{arrow}}  \includegraphics[height=1cm]{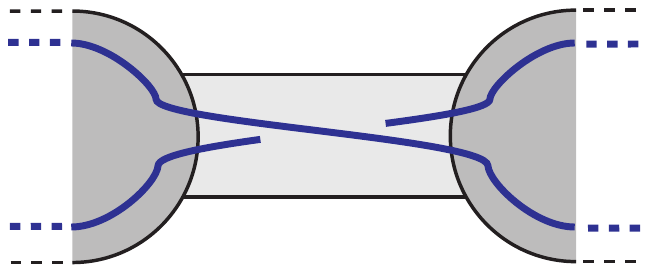}  \hspace{3mm}\raisebox{4mm}{and}  \hspace{3mm}
 \includegraphics[height=1cm]{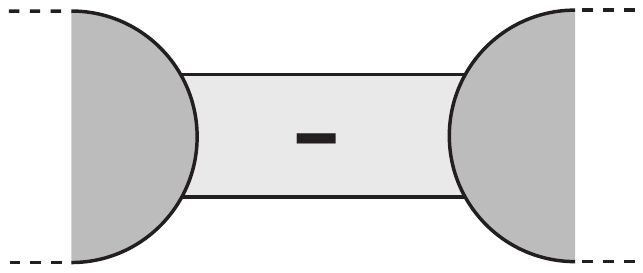}   \raisebox{4mm}{\includegraphics[width=1cm]{arrow}} \includegraphics[height=1cm]{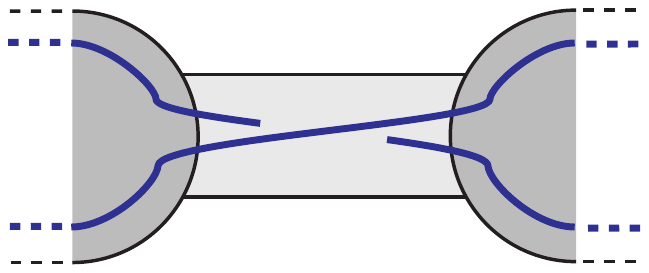}  \;   .\]
Since $G$ is topologically a punctured sphere,  capping off the punctures of $G$ then gives a link diagram $D(G)$ on the $2$-sphere. We will call $D(G)$ the {\em link diagram associated with $G$}.
Clearly, the two Tait graphs of $D(G)$ are $G$ and $G^*$, and $D(\T(D))=D$.

Now suppose that $G$ is a (not necessarily plane) signed ribbon graph and that it is the ribbon graph of some link diagram. We want to find the set $\calD(G)$ of link diagrams that have $G$ as a signed ribbon graph.  By Proposition~\ref{p.pd3}, $G$ presents a link diagram $D$ if and only if it is the partial dual of a Tait graph of $D$. Then, since every signed plane graph is a Tait graph of a link diagram, it follows that 
\begin{equation}\label{e.cald}
  \calD(G)=\{ D(G^A) \; | \;  A\subseteq E(G) \text{ and } G^A \text{ is plane}   \}. 
  \end{equation}
Although there is a unique link diagram associated with a pair of Tait graphs, in general, a ribbon graph can have many plane partial duals, and  $ \calD(G)$ can contain more than one distinct link diagram. 

Question~\ref{q3} therefore asks for the relation between $D(G)$ and $D(H)$, where $G$ and $H$ are both partially dual plane graphs. This leads to the graph theoretical question:
\begin{enumerate}
\setcounter{enumi}{2}
\renewcommand{\theenumi}{Q\arabic{enumi}${}^{\prime}$}
\item  \label{q3'} If $G$ and $G^A$ are both plane graphs, how are they related to each other?  
\end{enumerate}

We now turn our attention to the solutions of \ref{q2'} and \ref{q3'}. In Section~\ref{s.1-sums} we discuss $1$-sums of ribbon graphs and introduce \pbs - a simple variation on the notion of  separability of a graph. These give a natural way to decompose a ribbon graph into plane graphs. Then, in Section~\ref{s.char}, we answer Question~\ref{q2'}, and hence \ref{q2}, by showing that partial duals of plane graphs are completely characterized by the existence of \pbst. 
We then apply this result in Section~\ref{s.join}  to answer \ref{q3'}, which, in turn, is used to answer \ref{q3} in Section~\ref{s.knots}.

\begin{remark}
The relation between the ribbon graphs of a link diagram was a primary motivation for the introduction of partial duality by Chmutov in \cite{Ch1}. 
Similarly, Tait graphs and their relation to medial graphs also provided the primary motivation of twisted duality, a generalization of duality introduced  in \cite{EMM}. 
If $G$ and $H$ are embedded graphs and $G_m$ and $H_m$ are their embedded medial graphs, then $G$ and $H$ must be Tait graphs of  $G_m$ and $H_m$. It then follows that $G_m$ and $H_m$ are equal as embedded graphs if and only if $G$ and $H$ are geometric duals. Thus we can take the point of view that  the relation of equality as embedded graphs generates the relation of geometric duality. This point of view suggests that other concepts of equality of embedded graphs will generate other concepts of duality.  In   \cite{EMM} it was show that equality as abstract graphs  generates twisted duality, and that equality as combinatorial maps generates partial duality.  In addition, \cite{EMM} contains a way to construct the Tait graphs of non-checkerboard colourable $4$-regular embedded  graph.
\end{remark}

\section{$1$-sums and decompositions of ribbon graphs}\label{s.1-sums}

In this section we discuss separability and $1$-sums of ribbon graphs. These  are   natural extensions of the corresponding operations for graphs.  We will go on to introduce the concept of a \pb and of a \jpbt.  We will see  that these types of separations of a ribbon graph give  characterizations of ribbon graphs that have plane partial duals,  providing a connection between the genus of a partial dual and  separability.

\subsection{$1$-sums of ribbon graphs}\label{ss.sum}

Let $G$ be a ribbon graph, $v\in V(G)$, and $P$ and $Q$ be non-trivial ribbon subgraphs of $G$. Then $G$ is said to be the {\em $1$-sum} of $P$ and $Q$, written $P\oplus Q$, if $G=P\cup Q$ and $P\cap Q=\{v\}$. The $1$-sum is said to {\em occur} at the vertex $v$, and $P$ and $Q$ are the {\em $1$-summands}. See Figure~\ref{f.join}.
Note that we do not require the ribbon graphs $G$, $P$ or $Q$ to be connected.

\begin{figure}
\begin{tabular}{ccccc}
\includegraphics[height=2cm]{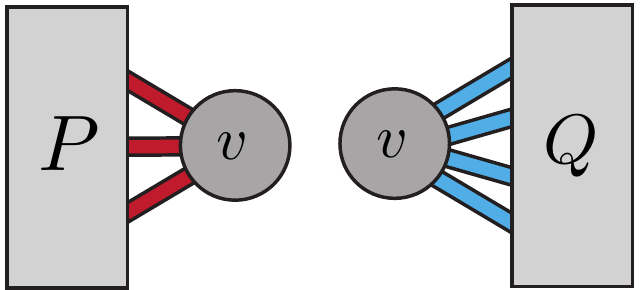} & \hspace{0cm} & \includegraphics[height=2cm]{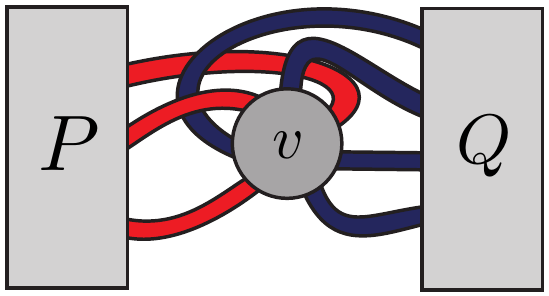} 
& \hspace{0cm} & \includegraphics[height=2cm]{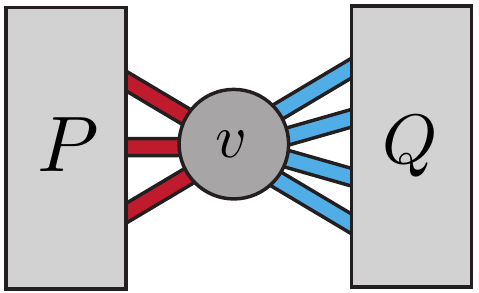}  \\
Ribbon graphs $P$ and $Q$. && A $1$-sum  $P\oplus Q$ && A join $P\vee Q$.
\end{tabular}
\caption{A $1$-sum and a join of two ribbon graphs.}
\label{f.join}
\end{figure}

\begin{example}\label{e.1sum}
Two examples of a $1$-sum $G=P\oplus Q$, and their $1$-summands are shown in Figure~\ref{f.e1sum}. 
Observe  that while example on the left can be written as a $1$-sum of two plane ribbon graphs, the example on the right cannot (although it is the $1$-sum of three plane ribbon graphs). 

\begin{figure}
\begin{center}
\begin{tabular}{c|c}
\includegraphics[height=3cm]{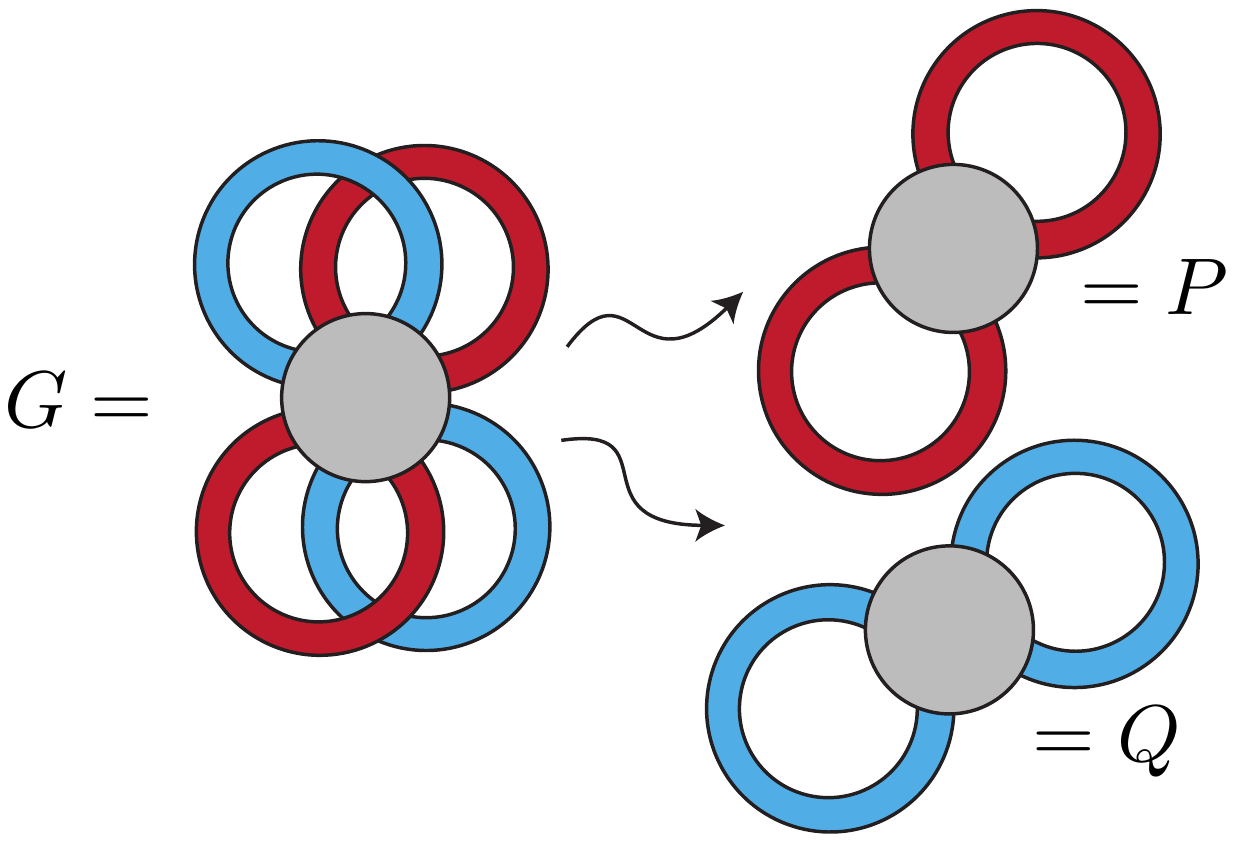} \hspace{6mm} & \hspace{6mm}
\includegraphics[height=3cm]{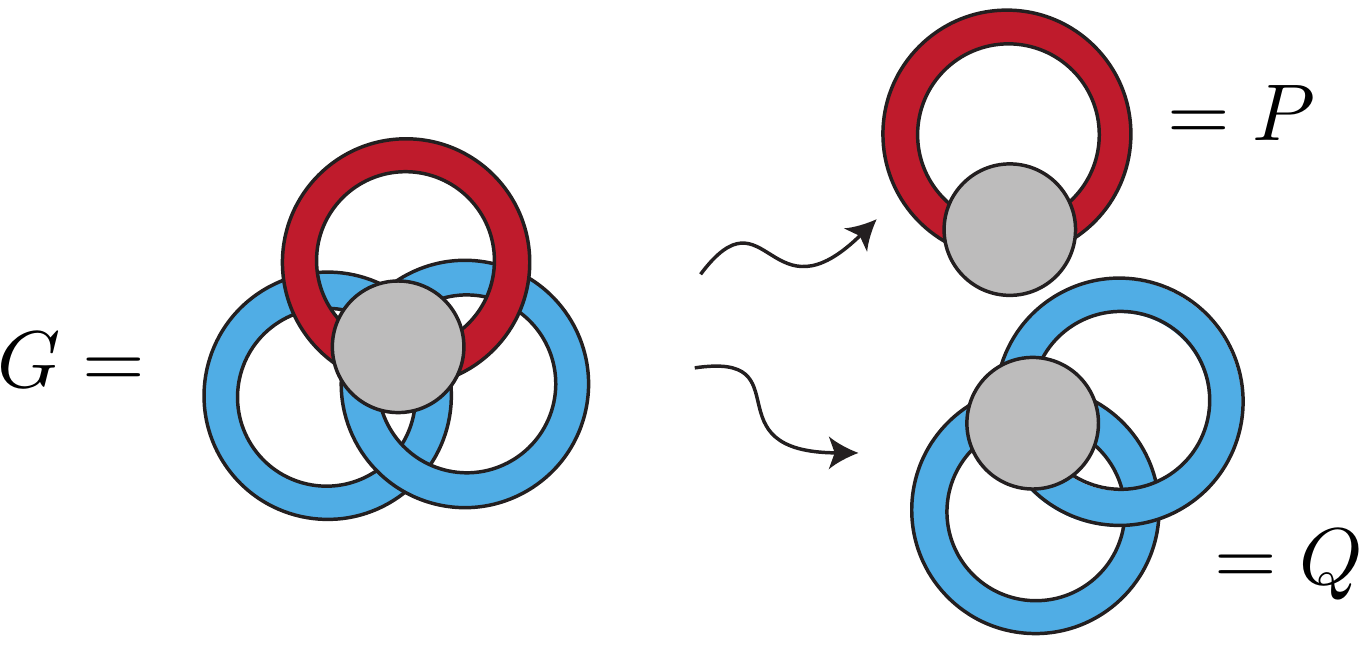} 
\end{tabular}
\end{center}
\caption{Two examples of  $1$-sums. The example on the left is a \pbt, while that on the right is not.}
\label{f.e1sum}
\end{figure}

\end{example}

Although  $G  =P\oplus Q$ if and only if this identity holds for their underlying abstract graphs, it is important to keep in mind that the topologies of $G$, $P$ and $Q$ can be quite different (for example, if $P$ consists of one vertex and one edge and is orientable, then $P\oplus P$ could be of genus $0$ or $1$ depending on the particular $1$-sum). Also, if  $G=P_1\oplus Q_1=P_2\oplus Q_2$  with the $1$-sum occurring  at the same vertex then the topological properties of $P_1$ and $Q_1$, and $P_2$ and $Q_2$ can be quite different (for example, the   genus $2$, orientable ribbon graph with one vertex and four edges with cyclic order $(a\,b\,a\,b\,c\,d\,c\,d)$ can be written as a $1$-sum of  two subgraph of genus $1$, or of two subgraphs of genus $0$, or of one subgraph of genus $1$ and one of genus $0$). 
   
 Here we are interested in expressions of $G$ as $1$-sums of ribbon graphs.  
We  say that $G$ can be {\em written as a sequence of $1$-sums} if $G$ contains subgraphs $H_1, \ldots ,H_l$ such that
\begin{equation}\label{e.seq}
G=   H_1 \oplus H_2\oplus H_3   \oplus \cdots\oplus H_l := (\cdots ((H_1 \oplus H_2)\oplus H_3)   \oplus \cdots\oplus H_l). 
\end{equation}
Observe that, in the expression \eqref{e.seq}, the $H_i$'s are  non-trivial ribbon subgraphs that cover $G$; for each $i\neq j$, $H_i$ and $H_j$ have  at most one vertex in common; and that if a $1$-sum occurs at a vertex $v$ in the sequence, then $v$ is a separating vertex (but not necessarily a cut vertex) of the underlying abstract graph of $G$.
We consider sequences of $1$-sums for $G$  to be {\em equivalent} if they differ only in the order of $1$-summation. (Note that only some reorderings of the $1$-summands in a sequence of $1$-sums are possible.) We consider all sequences of $1$-sums up to this equivalence.

We now introduce our main structural decomposition of a ribbon graph: a \pbt. The intuitive idea behind Definition~\ref{d.d} is that $G$ has a \pb giving rise to $P_A$ and $Q_A$ if both $P_A$ and $Q_A$ are plane and $G$ can be obtained by $1$-summing the components of $P_A$ and $Q_A$ to each other in such a way that every $1$-sum occurs at a distinct vertex and involves a component of $P_A$ and a component of $Q_A$.

\begin{definition}\label{d.d} 
Let $G=(V,E)$ be a connected ribbon graph and $A\subseteq E(G)$.  In addition, let  $P_A=G|_A$ be the ribbon subgraph induced by $A$, and $Q_A=G|_{A^c}$ be the ribbon  subgraph induced by $A^c=E(G)\bs A$.   We say that $A$ defines a {\em \pb} if either
\begin{enumerate}
\item  $A=E$ or $A=\emptyset$, and $P_A$ and $Q_A$ are both plane (in which case the \pb is said to be {\em trivial}); or,
\item $P_A$ and $Q_A$ are  plane and $G$ can be written as a sequence of $1$-sums of the components of $P_A$ and $Q_A$ such that every $1$-sum occurs at a different vertex of $G$ and involves a component of  $P_A$  and a component of $Q_A$.
\end{enumerate}

If $G$ is not connected, we say that $A$ defines a \pb if the restriction of $A$ to any component of $G$ defines a \pb of that component.
\end{definition}

The {\em length} of a non-trivial \pb  is the length of its sequence of $1$-sums, and the length of a trivial \pb is defined to be $1$. We will  say that $G$ admits a \pb if $A$ defines a \pb for some $A\subseteq E(G)$. 

Note the roles of $P_A$ and $Q_A$ in a \pb are interchangeable, and  that $A$ defines a \pb of $G$ if and only if $E(G) \backslash  A$ also defines one.  

As the set $A$ in a \pb is completely determined by $P_A$, we can, and will, specify \pbs by giving $P_A$ or $Q_A$, referring to the  triple $(G,P_A,Q_A)$ as a \pb of $G$.

\begin{example}\label{e.deco1}
A \pb of a ribbon graph  is illustrated below.
\[\includegraphics[height=3.4cm]{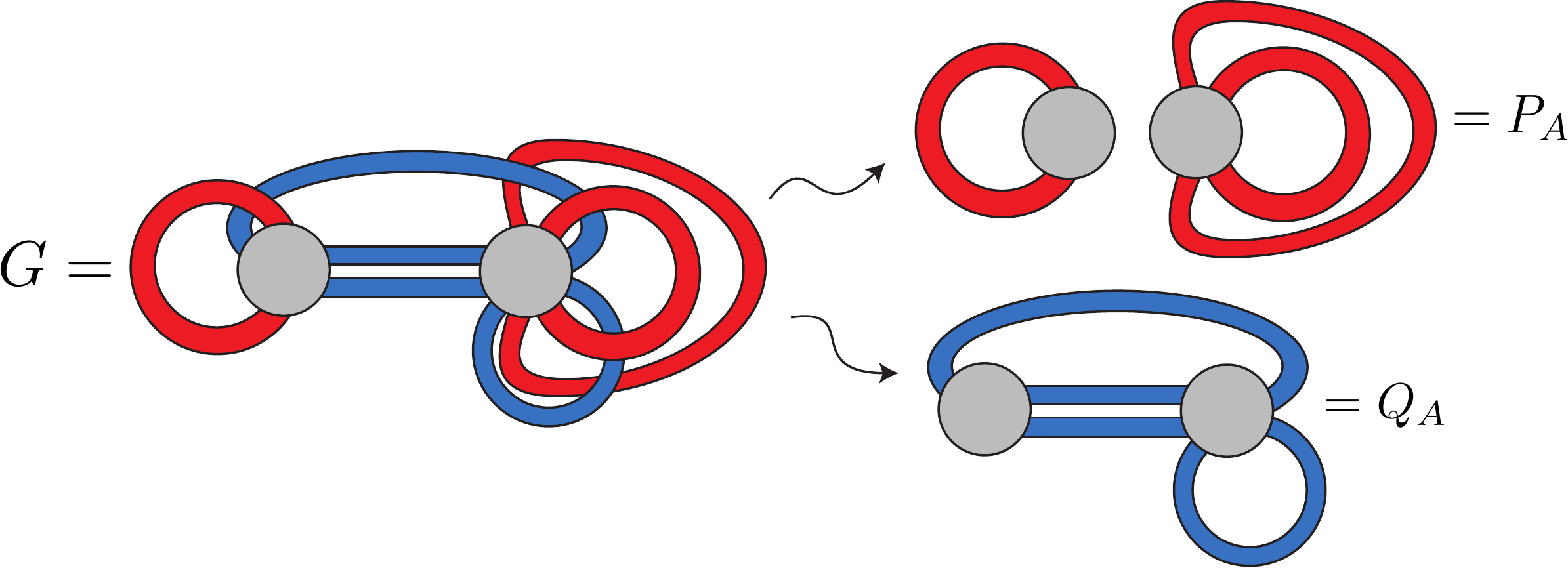}\]
Note that $G$ only admits one other \pbt: that defined by $E(G)\backslash A$.
\end{example}

\begin{example}\label{e.deco4}
A \pb  is illustrated below. 
\[\includegraphics[height=3.8cm]{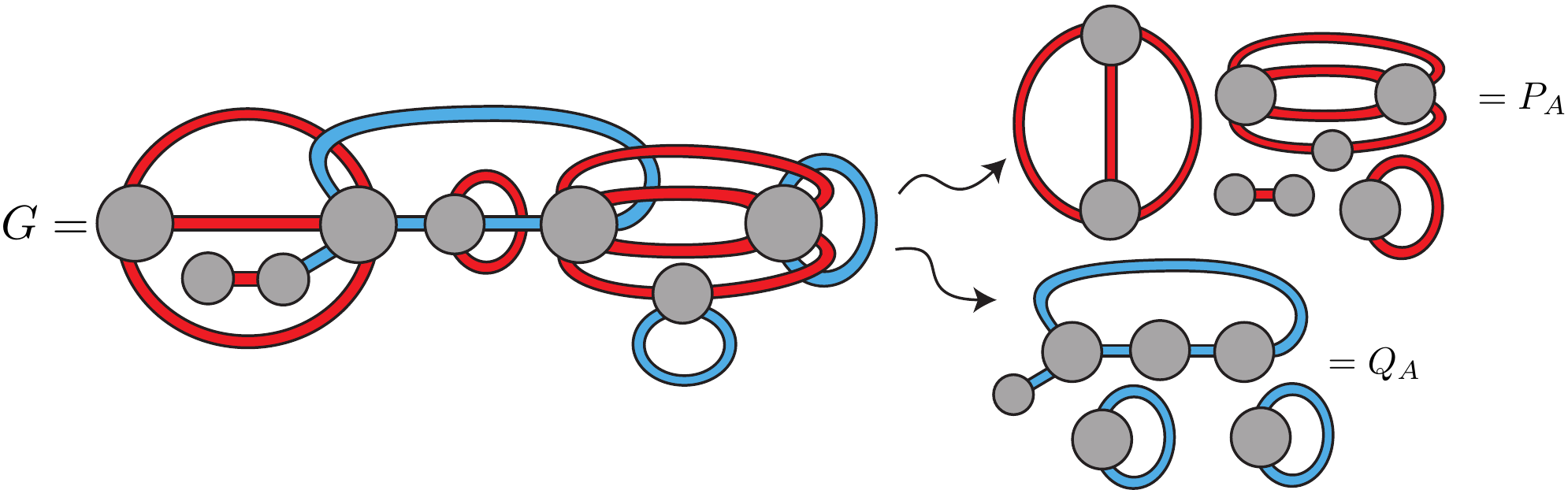}\]
Note that $G$ admits several other  \pbst, two of which are shown in Figure~\ref{f.e7}.
\end{example}
The $1$-sum on the left in Figure~\ref{f.e1sum} can also be regarded as a \pbt. 
The ribbon graph on the right in this example does not admit a \pbt.

\subsection{Joins}

 The join operation is a simple, special case of the $1$-sum operation.

\begin{definition}
Suppose $G=P\oplus Q$ with the $1$-sum occurring at $v$. If  
there is an arc on the  boundary of $v$ with the property that all edges of $P$ incident to $v$ intersect it on this arc, and that no edges of $Q$ intersect this arc, then  $G$ is said to be the {\em join} of $P$ and $Q$, written $G=P\vee Q$. (See Figure~\ref{f.join}.)
\end{definition}
The join  is also known as the ``one-point join'', a `` map amalgamation'' and the ``connected sum'' in the literature.


We will see in Section~\ref{s.join} that \jpbst, defined below, provide a characterization of plane  partial duals of plane graphs.
\begin{definition}\label{d.js}
Let $G=(V,E)$ be a ribbon graph and $A\subseteq E(G)$.    We say that $A$ defines a {\em \jpb}
if $G$ can be written as  $G=H_1\vee \cdots \vee H_l$, where $l\geq 1$, such that 
\begin{enumerate}
\item each $H_i$ is plane,
\item $A= \bigcup_{i\in I} E(H_i)$, for some $I\subseteq \{1, \ldots , l\}$.
\end{enumerate}
\end{definition}

It is worth emphasizing that the joins in Definition~\ref{d.js} need not occur at distinct vertices. Also the $H_i$ may themselves be joins of ribbon graphs.

Just as with \pbst, we can specify \jpbs by giving $P_A=G|_A$ and $Q_A=G|_{A^c}$.

\begin{example}\label{e.deco3}
A \jpb  is illustrated below. 
\[\includegraphics[height=3cm]{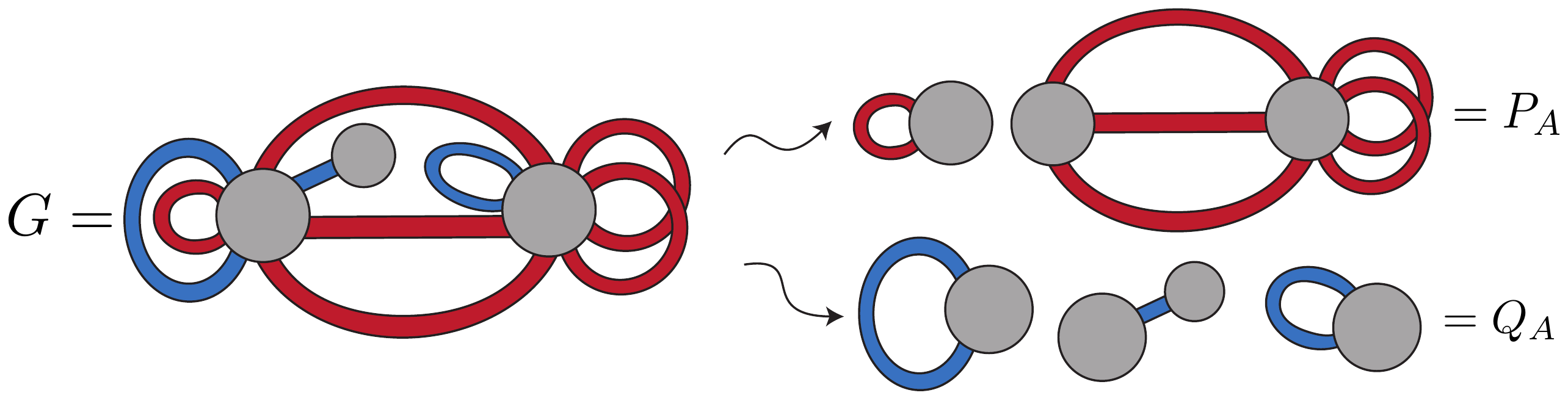}\]
\end{example}

Note that, since genus is additive under the join operation, a ribbon graph $G$ admits a \jpb if and only if it is plane. 

The following lemma relates \pbs and \jpbs for plane graphs. This result will be useful later.
\begin{proposition}\label{p.v1r}
Let $G$ be a plane ribbon graph. A subset $A\subseteq E(G)$ defines a \pb if and only if it  defines a \jpbt.
\end{proposition}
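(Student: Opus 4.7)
The plan is to prove the equivalence by exploiting one key observation: in a plane ribbon graph, every $1$-sum is in fact a join. I may assume throughout that $G$ is connected, since both definitions apply componentwise. The key observation follows from the standard fact that $g(P\oplus Q)\geq g(P)+g(Q)$, with equality precisely when the edge-ends of $P$ and of $Q$ at the sum vertex are not interleaved (that is, when the $1$-sum is a join); interleaving creates a handle and strictly increases the genus. Since $g(G)=0$ forces $g(P)=g(Q)=0$ together with equality above, every $1$-sum in a plane ribbon graph must be a join. I will also use the fact that every ribbon subgraph of a plane ribbon graph is plane.

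For the direction \pbt\ $\Rightarrow$ \jpbt, I would take the defining sequence $G=H_1\oplus\cdots\oplus H_l$ of $1$-sums of components of $P_A$ and $Q_A$. Each partial sum $H_1\oplus\cdots\oplus H_k$ is a ribbon subgraph of $G$, hence plane, so by the key observation each $\oplus$ in this sequence is actually a $\vee$. This gives $G=H_1\vee\cdots\vee H_l$ with each $H_i$ plane and with $A$ equal to the union of the edges of those $H_i$ lying in $P_A$; this is exactly a \jpbt. The trivial cases $A=\emptyset$ and $A=E(G)$ are handled directly by taking $l=1$ and $H_1=G$.

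For the converse, suppose $A$ defines a \jpbt\ $G=H_1\vee\cdots\vee H_l$ with $A=\bigcup_{i\in I}E(H_i)$. Let $P^1,\dots,P^r$ be the components of $P_A$ and $Q^1,\dots,Q^s$ be those of $Q_A$; each is plane as a ribbon subgraph of $G$. I would form the bipartite intersection multigraph $T$ whose nodes are these components and which has one edge between two nodes for each vertex of $G$ lying in both. The core claim is that $T$ is a tree, which I would prove by induction on $l$: after rearranging the bracketing so that $H_l$ is the last factor joined (possible because the joins form a tree-like structure with a leaf), write $G=G'\vee H_l$ at a single vertex $v$, where $G'\cap H_l=\{v\}$. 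Applying the inductive hypothesis to $G'$ with $A\cap E(G')$, attaching $H_l$ at $v$ either merges $H_l$ into an existing same-color component of $G'$ (leaving $T$'s edge set unchanged) or makes $H_l$ a new component attached by a single new edge at $v$; in either case the tree property is preserved. Given that $T$ is a tree, a root-first traversal of $T$ yields a sequence of $1$-sums of the components, with each $1$-sum occurring at the distinct vertex of $G$ labelling the corresponding edge of $T$, and with each $1$-sum between a $P$-component and a $Q$-component by bipartiteness; this is the required \pbt.

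The main obstacle is establishing the tree property of $T$ in the converse direction. The inductive step requires a short case analysis according to whether $l\in I$ and to which, if any, components of $G'$ contain the join vertex $v$. In each case the decisive point is that $H_l$ meets $G'$ only at $v$, so attaching $H_l$ cannot close any new cycle in $T$.
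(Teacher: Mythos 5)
Your forward direction rests on a false ``key observation.'' It is not true that in a plane ribbon graph every $1$-sum is a join, nor that $g(P\oplus Q)=g(P)+g(Q)$ holds precisely when the ends at the sum vertex are non-interleaved. Counterexample: let $G$ be the orientable ribbon graph with a single vertex $v$ and four loops $p_1,q_1,p_2,q_2$ whose ends occur in the cyclic order $p_1p_1q_1q_1p_2p_2q_2q_2$ around $v$. No two loops interlace, so $G$ is plane. Put $P=G|_{\{p_1,p_2\}}$ and $Q=G|_{\{q_1,q_2\}}$: both are connected plane ribbon subgraphs, and $G=P\oplus Q$ is a $1$-sum at $v$, so $A=\{p_1,p_2\}$ defines a \pb of length $2$. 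The $P$-ends and $Q$-ends interleave in the cyclic pattern $P,Q,P,Q$ (take one end of each of $p_1,q_1,p_2,q_2$), yet $g(G)=0=g(P)+g(Q)$ and $G$ is \emph{not} the join $P\vee Q$, since no arc on the boundary of $v$ contains all $P$-ends and no $Q$-ends. Hence the conclusion you draw --- that the $1$-summands of the \pb themselves serve as join-summands, $G=H_1\vee\cdots\vee H_l$ with $A$ the union of the $P_A$-side summands --- is unobtainable in general: in this example the required \jpb exists but only via the finer decomposition of $G$ into four single-loop join-summands, with $A=E(H_1)\cup E(H_3)$. Even the inequality $g(P\oplus Q)\geq g(P)+g(Q)$ fails: shuffling two interlaced pairs as $p_1q_1p_2q_2q_1p_1q_2p_2$ gives $g(P)=g(Q)=1$ but $g(G)=1$ (the interlacement matrix has rank $2$).

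What \emph{is} true, and what the paper's proof uses, is a weaker local statement: if $G$ is plane and a $1$-sum occurs at $v$, then the boundary of $v$ splits into two arcs yielding \emph{some} join $G=K\vee J$ at $v$, where $K$ and $J$ need not be the $1$-summands and may each contain edges of both $P_A$ and $Q_A$; the paper then inducts on the number of edges, with $A\cap E(K)$ and $A\cap E(J)$ defining \pbs of the pieces. Your forward direction could be repaired along those lines, but not via the asserted genus identity. Your converse direction, by contrast, is essentially correct: the bipartite intersection multigraph $T$ of the components of $P_A$ and $Q_A$ is indeed a forest (no rebracketing is needed, since the paper's join sequences are left-associated, so $G=(H_1\vee\cdots\vee H_{l-1})\vee H_l$ already), and your decisive point --- that $H_l$ meets $H_1\vee\cdots\vee H_{l-1}$ in a single vertex, so no cycle or multiple edge can be created in $T$ --- is the right one. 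This is in fact more detailed than the paper's own converse, which merely observes that joins of plane graphs are plane and asserts the resulting \pbt.
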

\begin{proof} It is enough to prove the proposition for  connected ribbon graphs, so assume that $G$ is connected.

We prove necessity by induction on the number of edges. If $|E(G)|=0$ or $1$ the result is trivial. Suppose that the assertion holds for all ribbon graphs with fewer than $k$ edges. Now suppose that $|E(G)|=k$ and $A\subseteq E(G)$ defines a \pbt. If $A$ defines a trivial \pb the result is obvious. If  $A$ defines a non-trivial \pbt, 
suppose that a $1$-sum occurs at a vertex $v$ in the \pbt. Since $G$ is  plane and a $1$-sum  occurs at $v$ it follows that the boundary of $v$ can be partitioned into two arcs $p$ and $q$ which have the properties that an edge of $G$ meets $p$ and an edge meets $q$; and that any path between a half-edge of $G$ that is incident to $p$ and a half-edge that is incident to $q$ must pass though $v$. It follows from this that we can write $G=K\vee J$ with the join occurring at $v$. Furthermore, $A\cap E(K)$ defines a \pb of $K$, and  $A\cap E(J)$ defines a \pb of $J$. By the inductive hypothesis,  $A\cap E(K)$ defines a \jpb of $K$, and  $A\cap E(J)$ defines a \jpb of $J$, and it follows that $A$ defines a \jpb of $G$.

Conversely, let  $G=H_1\vee \cdots \vee H_l$ and $A= \bigcup_{i\in I} E(H_i)$. Then, since each  $H_i$ is plane and the join of plane ribbon graphs is plane, setting 
$P_A$ to be the spanning ribbon subgraph of $G$ induced by $A$, and $Q_A$  to be the ribbon subgraph induced by  $E(G)\backslash A$ determines the required  \pbt. 
\end{proof}

\subsection{Partial duals of $1$-sums}
We are especially interested in  connections between $1$-sums and partial duals. In general, partial duality does not preserve $1$-sums. However, in this paper we will see that  $1$-sums and partial duals are closely related. This subsection contains two key  lemmas on partial duals of $1$-sums.

\begin{lemma}\label{l.pchar3}
Let $G$ be a ribbon graph such that $G=P\oplus Q$, where $Q$ is a plane ribbon graph. Then the genus of $ G^{E(Q)}$ is equal to  the genus of $P$.
Moreover, if the $1$-sum occurs at $v$, then  every vertex in $V(P) \backslash  v$ is also a vertex of $G^{E(Q)}$.
\end{lemma}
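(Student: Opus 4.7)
The plan is to establish the first claim via an Euler characteristic computation, and to obtain the second claim as a direct by-product of the vertex count. The two key inputs are (i) the definition of partial duality, which says that the vertices of $G^{E(Q)}$ are in bijection with the boundary components of the spanning ribbon subgraph $(V(G),E(Q))$, and (ii) Proposition~\ref{p.pd2}, which yields $(G^{E(Q)})^{*}=G^{E(P)}$ and therefore puts the faces of $G^{E(Q)}$ in bijection with the boundary components of $(V(G),E(P))$.

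Because $P\cap Q=\{v\}$, the edges in $E(Q)$ are attached only to vertices in $V(Q)$, so the spanning subgraph $(V(G),E(Q))$ is just $Q$ together with the isolated vertices in $V(P)\setminus v$, and symmetrically for $(V(G),E(P))$. Writing $b(\cdot)$ for the number of boundary components of a ribbon graph, this gives
\[
|V(G^{E(Q)})| = b(Q)+|V(P)|-1, \qquad |F(G^{E(Q)})| = b(P)+|V(Q)|-1,
\]
while $|E(G^{E(Q)})| = |E(P)|+|E(Q)|$. Substituting into $|V|-|E|+|F|=2-2g$ applied to the connected ribbon graph $G^{E(Q)}$, and using $b(Q)=2-|V(Q)|+|E(Q)|$ (since $Q$ is plane and connected), the contributions from $Q$ cancel and the remaining terms reduce to $|V(P)|-|E(P)|+b(P)=2-2g(P)$. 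This yields $g(G^{E(Q)}) = g(P)$.

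For the second claim, each $u\in V(P)\setminus v$ is isolated in $(V(G),E(Q))$, so its vertex-disc boundary appears as its own boundary component of the spanning subgraph, carrying only the arrows of those $P$-edges incident to $u$ and arranged in the same cyclic order as at $u$ in $P$. Hence $u$ re-emerges as a vertex of $G^{E(Q)}$. The main technical nuisance I anticipate is the bookkeeping when $P$ or $Q$ fails to be connected: the $1$-sum occurs at a single vertex $v$, so any component of $P$ or $Q$ not containing $v$ is a full disjoint component of $G$; by Proposition~\ref{p.pd2}, partial duality acts disjointly on these, and the extra components of $Q$ stay plane after dualizing, contributing zero genus to both sides. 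This reduces the general case to the connected one handled above.
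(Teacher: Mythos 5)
Your proof is correct, but it takes a genuinely different route from the paper's. You argue by counting: the vertices of $G^{E(Q)}$ are the boundary components of the spanning ribbon subgraph $(V(G),E(Q))$, which, because $P\cap Q=\{v\}$, is just $Q$ together with the isolated vertices $V(P)\backslash v$; the faces of $G^{E(Q)}$ are the vertices of $(G^{E(Q)})^{*}=G^{E(Q)\Delta E(G)}=G^{E(P)}$ by Proposition~\ref{p.pd2}, hence the boundary components of $(V(G),E(P))$; substituting into Euler's formula and using planarity of $Q$ makes the $Q$-terms cancel, leaving $|V(P)|-|E(P)|+b(P)=2-2g(P)$. The paper instead argues topologically: it embeds $Q$ in $S^{2}$ with the $1$-sum vertex as the southern hemisphere, forms $Q^{*}$ in the northern hemisphere, deletes $v_P$ from a cellular embedding $P\subset\Sigma$, and glues the disc carrying $Q^{*}$ into its place, producing an explicit cellular embedding of $G^{E(Q)}$ in the \emph{same} surface $\Sigma$ as $P$, from which both the genus equality and the survival of the vertices $V(P)\backslash v$ are read off at once. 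Your computation is shorter and purely combinatorial, and your arrow-presentation justification of the ``moreover'' clause (each $u\in V(P)\backslash v$ is an isolated disc of $(V(G),E(Q))$ whose boundary circle carries exactly the arrows of its incident $P$-edges in the same cyclic order) is exactly the right argument; the paper's construction buys more, namely an explicit re-embedding whose cut-and-reglue picture recurs later in the paper (compare the proof of Lemma~\ref{t.sg1}). One step you should make explicit: applying $|V|-|E|+|F|=2-2g$ to $G^{E(Q)}$ assumes $G^{E(Q)}$ is connected when $G$ is, i.e.\ that partial duality preserves the number of connected components; this is true but is not among the properties listed in Proposition~\ref{p.pd2}, so it needs a citation to \cite{Ch1}. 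Note also that your formula uses orientability, which is harmless since the paper restricts attention to orientable ribbon graphs; your reduction of the disconnected case to the connected one via disjoint action of partial duality is sound.
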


\begin{proof} Without loss of generality, assume that $G$ is connected.
Suppose that the $1$-sum occurs at the vertex $v$. Let $v_P$ and $v_Q$ denote the copies of $v$ in $P$ and $Q$, respectively.  $G  =P\oplus Q$ is obtained from $P$ and $Q$ by identifying  $v_P$ and $v_Q$. Let $\phi:\thinspace v_Q\rightarrow v_P$ denote the identifying map, and let $\phi|_{\partial}$, denote the restriction of $\phi$ to the boundary. 

We begin by  cellularly embedding $P$ into a surface $\Sigma$, and  $Q$ into the $2$-sphere, $S^2$, in such a way that $v_Q$ consists of the southern hemisphere, and  the equator of $S^2$ is the boundary of $v_Q$.  (For an example of these embeddings see  Figure~\ref{f.suff}.)

The partial dual $ G^{E(Q)}=(P\oplus Q)^{E(Q)}$ can be formed from the embeddings of $P$ and $Q$ in the following way. 
\begin{enumerate}
\item Form the dual $Q^*\subset S^2$ of $Q\subset S^2$. (Note that $Q^*$ lies entirely in the northern hemisphere of $S^2$.)
\item Delete the vertex $v_P\subseteq P\subset \Sigma$. Also delete the southern hemisphere of $S^2$ to obtain an embedding of $Q^*$ in a disc $\mathfrak{D}$. \item  Identify the boundaries of $\mathfrak{D}$ and $\Sigma \bs v_P$ using   $\phi|_{\partial}$.  (Note the boundaries of  $\Sigma \bs v_P$ and $v_P$ are equal, and that the boundaries of  $\mathfrak{D}$ and $v_Q$ are equal.)
\end{enumerate} 
 (For an example of the formation of the partial dual in this way see Figure~\ref{f.suff}.)
 This results in a cellular embedding of $ G^{E(Q)}=(P\oplus Q)^{E(Q)}$ in $\Sigma$.
Finally, since $P$ and $G^{E(Q)}$ can both be cellularly embedded in  $\Sigma$, they must be of the same genus. It is obvious from this construction that   every vertex in $V(P) \backslash  v$ is also in $G^{E(Q)}$.
\end{proof}

\begin{figure}
\begin{center}
\begin{tabular}{ccc}
 \includegraphics[height=3cm]{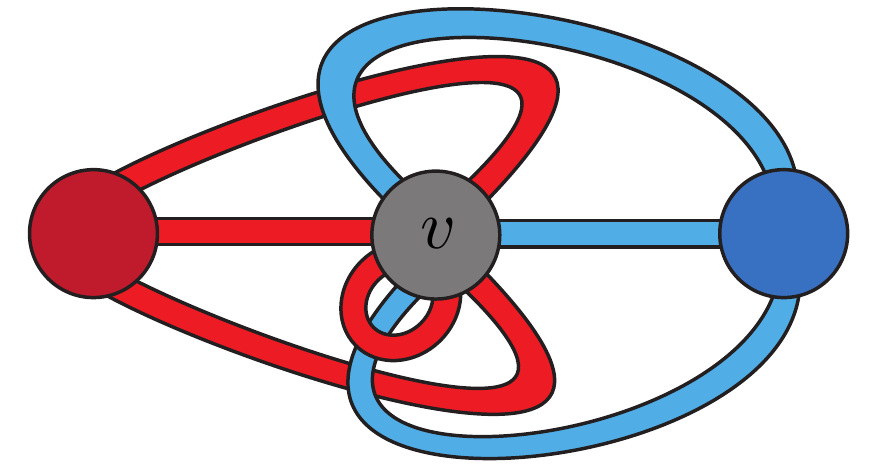}
&& \includegraphics[height=3cm]{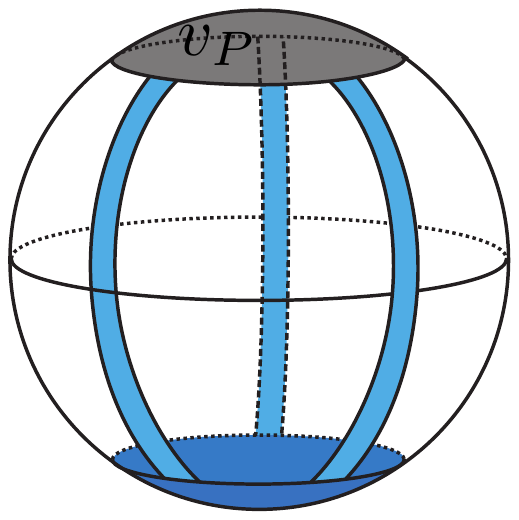}
\\
 $G=P\oplus Q$  && $P$
 \\
 &&\\
\includegraphics[height=3cm]{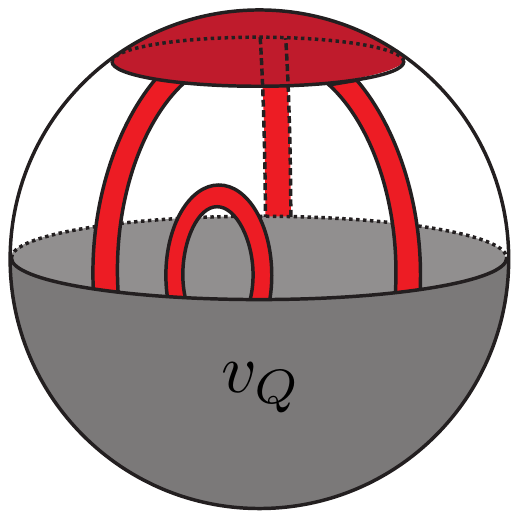} &&  \includegraphics[height=3cm]{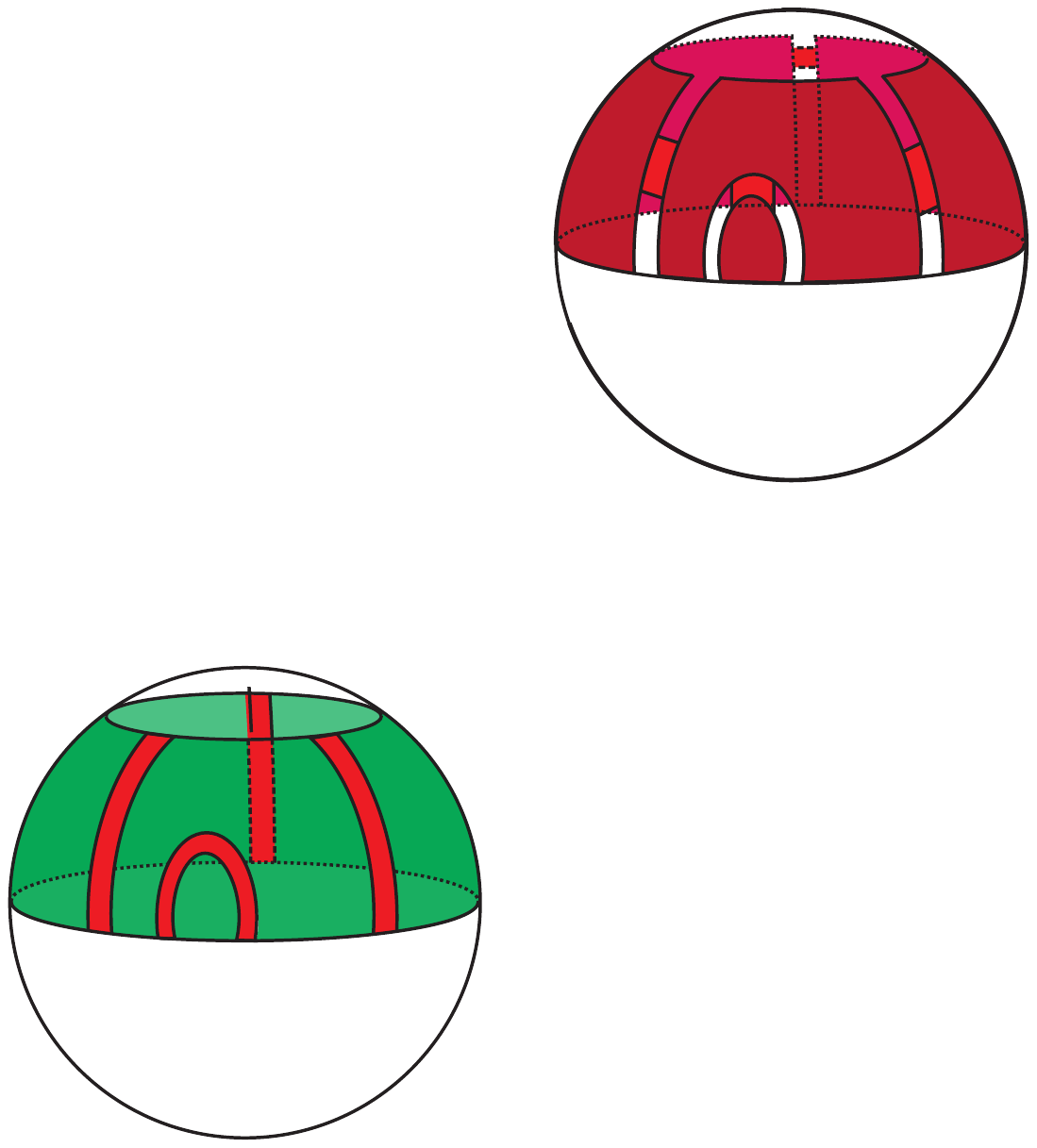}
\\
 $Q$  && $Q^*$
 \\
 &&
 \\
 \includegraphics[height=3cm]{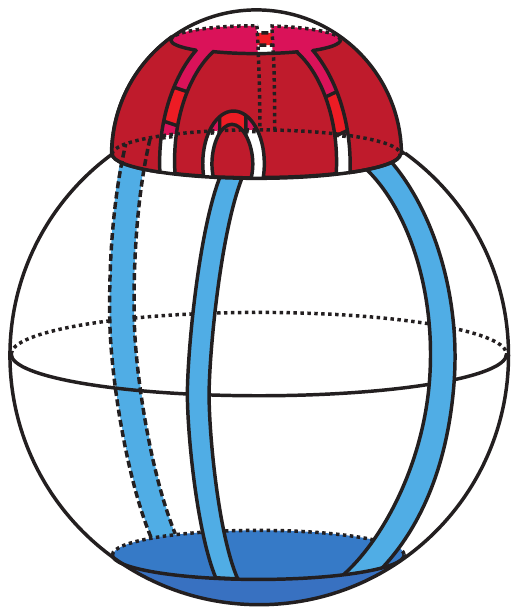}
&&\includegraphics[height=3cm]{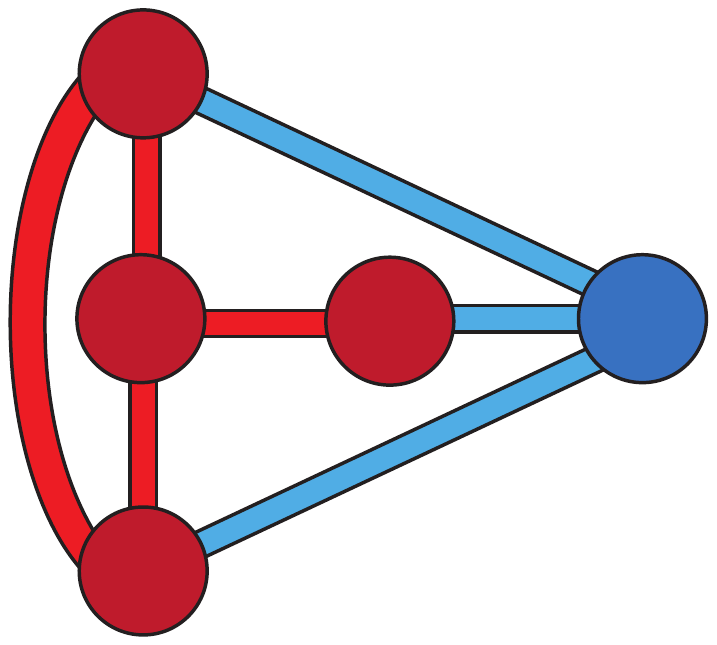} 
\\
 $G^{E(Q)}$  && Redrawing $G^{E(Q)}$
\end{tabular}
\end{center}
\caption{An example of the construction used in the proof of Lemma~\ref{l.pchar3}.}
\label{f.suff}
\end{figure}

Although, in general, partial duality does not preserve $1$-sums, it does  preserve joins.
To see this, suppose  $G=P\vee Q$ with the join occurring at the vertex $v$.   
The boundary of $v$ can be partitioned into two arcs $p$ and $q$ such that the edges of $P$ intersect $v$ on $p$, and the edges of $Q$ intersect $v$ on $q$. Let $a$ and $b$ be the two points on  the boundary of $v$ where $p$ and $q$ meet.

In the formation of the partial dual $G^A$, following Definition~\ref{d.pd}, there is a boundary cycle $\alpha$ of $(V(G),A)$ that contains both $a$ and $b$ and this is the only boundary cycle of $(V(G),A)$ that can contain marking arrows that have labels belonging to both $E(P)$ and $E(Q)$. Moreover, the points $a$ and $b$ partition $\alpha$ into two arcs $p'$ and $q'$, with $p'$ containing no  $E(Q)$-labelled arrows, and $q'$ containing no  $E(P)$-labelled arrows. 
The boundary cycle $\alpha$ corresponds to a vertex $v'$ of $G^A$ and $p'$ corresponds to an arc on a vertex of $P$. It is then clear  that $v'$ and $p'$ determine an expression of  $G^A$ as the join $P^{E(P)\cap A}\vee Q^{E(Q)\cap A}$. 
This discussion  is summarized by the  following lemma. 
\begin{lemma}\label{sum5}
Suppose $G$ is a ribbon graph that can be expressed as the join $G=P \vee Q$, and that $A\subseteq E(G)$. Then 
\[  G^A=  ( P \vee Q)^A = P^{A'}\vee Q^{A''},   \]
where $A':=A\cap E(P)$, $A'':=A\cap E(Q)$, and the joins act naturally with respect to partial duality.
\end{lemma}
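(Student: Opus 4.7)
The plan is to prove Lemma~\ref{sum5} directly from the arrow-presentation formulation of partial duality given in Definition~\ref{d.pd}, by tracking how the boundary cycles of the spanning ribbon subgraph $(V(G),A)$ decompose when $G$ is itself a join. Most of the geometric argument is already sketched in the paragraph preceding the lemma, so the proof amounts to a careful verification of the claims made there.

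\textbf{Setup.} Let $v$ be the vertex at which the join $G=P\vee Q$ occurs, and let $p, q$ be the two arcs of $\partial v$ on which the edges of $P$ and of $Q$ respectively attach, with common endpoints $a,b$. Arbitrarily orient and label the edges of $G$; this also orients and labels the edges of $P$ and $Q$. Writing $A=A'\sqcup A''$ with $A'\subseteq E(P)$ and $A''\subseteq E(Q)$, the spanning ribbon subgraph $(V(G),A)$ is itself a join of $(V(P),A')$ and $(V(Q),A'')$ at $v$ along the same arcs $p, q$.

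\textbf{Classification of boundary cycles.} The next step is to classify the boundary cycles of $(V(G),A)$ according to whether they involve arrows labelled by edges of $P$, of $Q$, or of both. Any boundary cycle that does not pass through either $a$ or $b$ stays entirely on one side of $v$ and therefore carries arrows from only one of $E(P)$ or $E(Q)$. Since $a$ and $b$ are the only points of $\partial v$ where a boundary traversal can switch sides, any cycle meeting one of them must meet the other. This singles out a distinguished cycle $\alpha$ passing through both $a$ and $b$, split by them into a $P$-arc $p'$ (carrying only $E(P)$-labelled arrows) and a $Q$-arc $q'$ (carrying only $E(Q)$-labelled arrows).

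\textbf{Assembly into a join.} Forming $G^A$ from the resulting arrow presentation, the boundary cycles involving only $E(P)$-labels, together with the arc $p'$ viewed as an arc on a single vertex disc, give exactly the arrow presentation of $P^{A'}$; symmetrically, the cycles involving only $E(Q)$-labels, together with $q'$, give the arrow presentation of $Q^{A''}$. Since $p'$ and $q'$ lie on the common boundary cycle $\alpha$, they correspond to a single vertex $v'$ of $G^A$, and they meet precisely at the two points descended from $a$ and $b$. Thus $v'$ has the defining property of a join vertex: all edges of $G^A$ with labels in $A'\subseteq E(P)$ attach along $p'$, while all edges with labels in $A''\subseteq E(Q)$ attach along $q'$. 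It follows that $G^A=P^{A'}\vee Q^{A''}$, with the join occurring at $v'$.

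\textbf{Main obstacle.} The delicate point is justifying that $\alpha$ really is a single boundary cycle, rather than two, i.e.\ that a boundary traversal entering $v$ at $a$ from the $p$-side continues across $v$ and exits at $b$ via the $q$-side before returning to close up at $a$ from the $Q$-side. This follows because the two short boundary arcs of $v$ separating $p$ from $q$ (one near $a$, one near $b$) carry no edge attachments of $(V(G),A)$, so the boundary traversal is forced to pass smoothly from one side to the other at exactly those points. Once this is spelled out by a small local case analysis at $a$ and $b$, the rest of the argument is a direct reading-off of arrow presentations, and the natural compatibility of the joins with partial duality is automatic.
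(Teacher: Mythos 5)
Your proposal is correct and follows essentially the same route as the paper: the paper's proof of Lemma~\ref{sum5} \emph{is} the discussion preceding it, which identifies the distinguished boundary cycle $\alpha$ of $(V(G),A)$ through the transition points $a$ and $b$, splits it into arcs $p'$ and $q'$ carrying only $E(P)$- and $E(Q)$-labelled arrows respectively, and reads off the join structure of $G^A$ at the resulting vertex $v'$. Your added verification that $\alpha$ is a single cycle (via the attachment-free transition arcs at $a$ and $b$) just makes explicit a step the paper leaves implicit.
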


\section{Partial duals of plane graphs and \pbst}\label{s.char}

In this section we characterize partial duals of plane graphs in terms of \pbst, which is the  first of our main results. We then go on to determine how all of the \pbs of a ribbon graph into two plane graphs are related to one another.

\subsection{A characterization of the partial duals of a plane graph}

\begin{theorem}\label{t.pchar1}
Let $G$ be a ribbon graph and $A\subseteq E(G)$. Then $G^A$ is a plane ribbon graph if and only if $A$ defines a   \pb of $G$.
\end{theorem}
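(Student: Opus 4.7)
The plan is to prove the equivalence by induction, with Lemma~\ref{l.pchar3} (genus of a partial dual at a plane summand) and Lemma~\ref{sum5} (partial duals commute with joins) as the central tools.

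For the sufficiency direction ($A$ defines a \pb $\Rightarrow G^A$ is plane), I will induct on the length $l$ of the \pbt. The base case $l=1$ (trivial biseparation) is immediate: then $G$ itself is plane (since the relevant $P_A$ or $Q_A$ equals $G$), and $G^A\in\{G,G^*\}$ is plane. The case $l=2$ is the heart of the argument: writing $G=P_A\oplus Q_A$ (a single $1$-sum at one vertex, with both summands plane), Lemma~\ref{l.pchar3} applied with the plane summand $P_A$ yields $g(G^{E(P_A)})=g(Q_A)=0$, so $G^A=G^{E(P_A)}$ is plane. For the inductive step with $l>2$, I peel off a leaf component $H$ of the bipartite tree of $1$-sums; WLOG $H$ is a component of $P_A$, so $G=G'\oplus H$ at a vertex $v$ and $E(H)\subseteq A$. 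The geometric construction in the proof of Lemma~\ref{l.pchar3} identifies $G^{E(H)}$ with the cellular embedding obtained from $G'$ by excising $v$ and gluing in a disc containing $H^*$ along $\phi|_\partial$. Setting $A':=A\setminus E(H)\subseteq E(G')$, one has $G^A=(G^{E(H)})^{A'}$, and a careful tracking shows that $A'$ defines a \pb of $G^{E(H)}$ of length $l-1$ (with $H^*$ replacing the deleted $v$ and being joined to the $Q_A$-component previously meeting $v$); the inductive hypothesis then gives $(G^{E(H)})^{A'}=G^A$ plane.

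For the necessity direction ($G^A$ plane $\Rightarrow A$ defines a \pbt), I plan to work with $H:=G^A$, which is plane, so that $G=H^A$. First I will establish that $G|_A$ and $G|_{A^c}$ are plane: this follows by treating $H|_A$ (a plane subgraph of a plane graph) as a $1$-summand of $H$ together with a spanning vertex complement, and then reapplying Lemma~\ref{l.pchar3} in reverse to relate $G|_A$ to the dual of $H|_A$. Then I will extract the structural $1$-sum tree condition. For each prime join component $J$ of $H$, Lemma~\ref{sum5} gives $G=\bigvee_{i}J_i^{A\cap E(J_i)}$. Within each prime piece, the fact that the partial dual stays plane forces, via a contrapositive of Lemma~\ref{l.pchar3}, that any two $A$-edges and $A^c$-edges meeting at a shared vertex must arise from a $1$-sum between a plane $A$-piece and a plane $A^c$-piece at that single vertex; two components of the same colour cannot share a vertex, for otherwise the resulting ribbon graph would have positive genus contradicting the planarity of $H$.

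The hardest step will be in the necessity direction: rigorously arguing that the shared-vertex structure inherited from the planar embedding of $H=G^A$ is exactly a bipartite tree of $1$-sums at distinct vertices alternating between $P_A$-components and $Q_A$-components, with no two components of the same colour meeting. This reduces to showing that if two $P_A$-components (or two $Q_A$-components) shared a vertex, one could exhibit a subribbon graph in $G$ whose partial dual at the corresponding edge set has positive genus, contradicting that $G^A$ is plane; isolating this contradiction will require analyzing the local interleaving of $A$- and $A^c$-arcs on the vertex disc and applying Lemma~\ref{l.pchar3} to an appropriately chosen $1$-sum.
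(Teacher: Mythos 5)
Your sufficiency direction coincides with the paper's proof: induction on the length of the \pbt, with the trivial case and Lemma~\ref{l.pchar3} handling lengths $1$ and $2$, and the inductive step peeling off the final $1$-summand $H_l$ (arranging $E(H_l)\subseteq A$ via $(G^A)^\ast=G^{E(G)\backslash A}$ and the fact that $A$ defines a \pb if and only if $A^c$ does), so that $A\backslash E(H_l)$ defines a \pb of $G^{E(H_l)}$ of length $l-1$. That half is sound and essentially verbatim the paper's argument.

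The necessity direction, however, has genuine gaps. First, your derivation that $G|_A$ and $G|_{A^c}$ are plane misapplies Lemma~\ref{l.pchar3}: that lemma requires a genuine $1$-sum decomposition $G=P\oplus Q$ meeting in a single vertex, and $H|_A$ is in general \emph{not} a $1$-summand of $H=G^A$ (adjoining ``a spanning vertex complement'' does not make it one). What your step actually needs is an identity of the form $G^A|_A\cong (G|_A)^\ast$, which is true but is neither stated nor proved in this paper. Second, and more seriously, the ``hardest step'' you defer is the entire content of the theorem, and the plan you sketch for it would not work as stated: the scenario you propose to exclude --- two components of $G|_A$ (or of $G|_{A^c}$) sharing a vertex --- is vacuously impossible, since subgraphs sharing a vertex lie in the same component. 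The genuine obstructions are different: a component of $G|_A$ and one of $G|_{A^c}$ meeting in \emph{two or more} vertices, and a cycle in the incidence structure of components and shared vertices, either of which prevents writing $G$ as a sequence of $1$-sums satisfying Definition~\ref{d.d}. Your proposed tool cannot rule these out: a ``contrapositive of Lemma~\ref{l.pchar3}'' presupposes exactly the $1$-sum structure being established, and passing from a bad local configuration to ``a ribbon subgraph whose partial dual has positive genus'' would require a genus-monotonicity statement for partial duals of subgraphs that is nowhere available here (this is the machinery of \cite{Mo6}, not of this paper). The paper avoids all of this with a direct geometric construction: it cellularly embeds $G^A$ in the plane, realizes the arrow presentation of $G=(G^A)^A$ as a family of disjoint closed plane curves joined by labelling lines, and reads the \pb off from the nesting (direct inclusion) structure of those curves --- the plane embedding automatically delivers the alternation, the distinctness of the $1$-sum vertices, and the sequencing. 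Your vertex-interlacement analysis is, incidentally, close to the paper's Lemma~\ref{sw2}, but there it serves the uniqueness result (Theorem~\ref{t.pbs}) \emph{after} Theorem~\ref{t.pchar1} is in hand, not the existence argument you need.
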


\begin{figure}
\begin{center}
\begin{tabular}{ccc}
 \includegraphics[width=3.5cm]{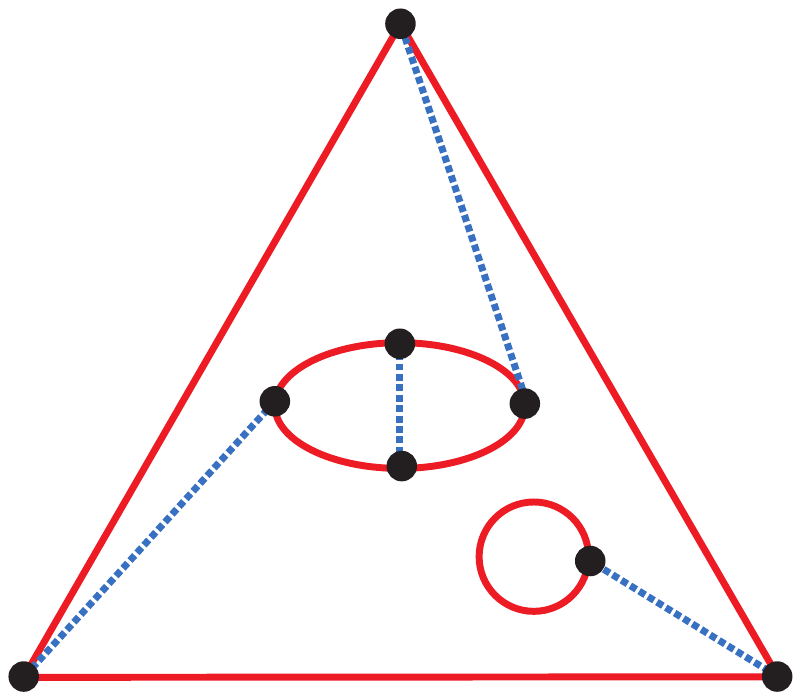} && \includegraphics[width=3.5cm]{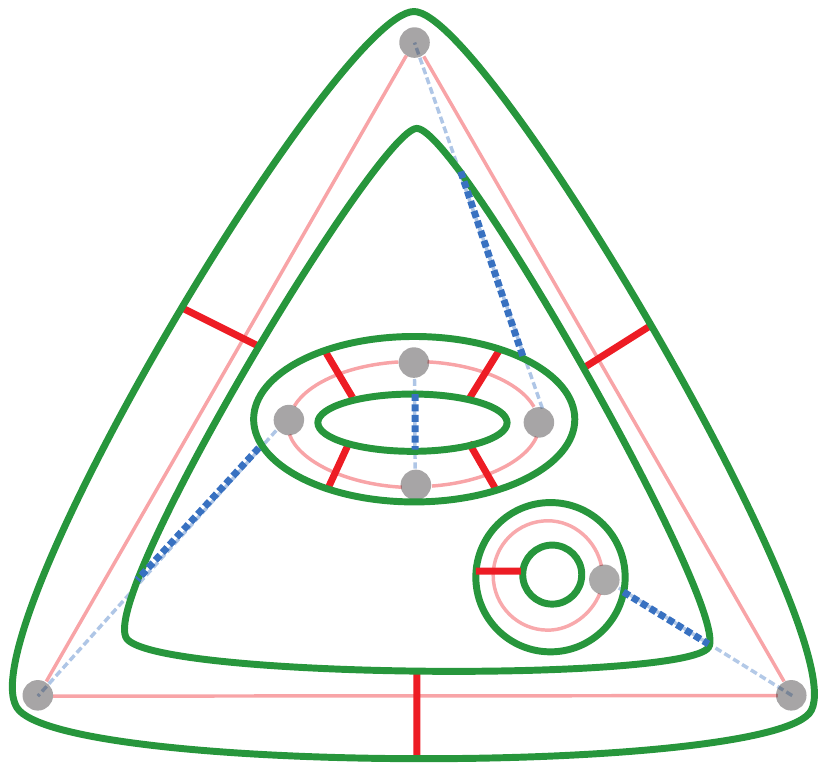} \\
  $G$. && The boundaries of $(V(G),A)$ . 
\end{tabular}

\begin{tabular}{c}
\\
\includegraphics[height=3.5cm]{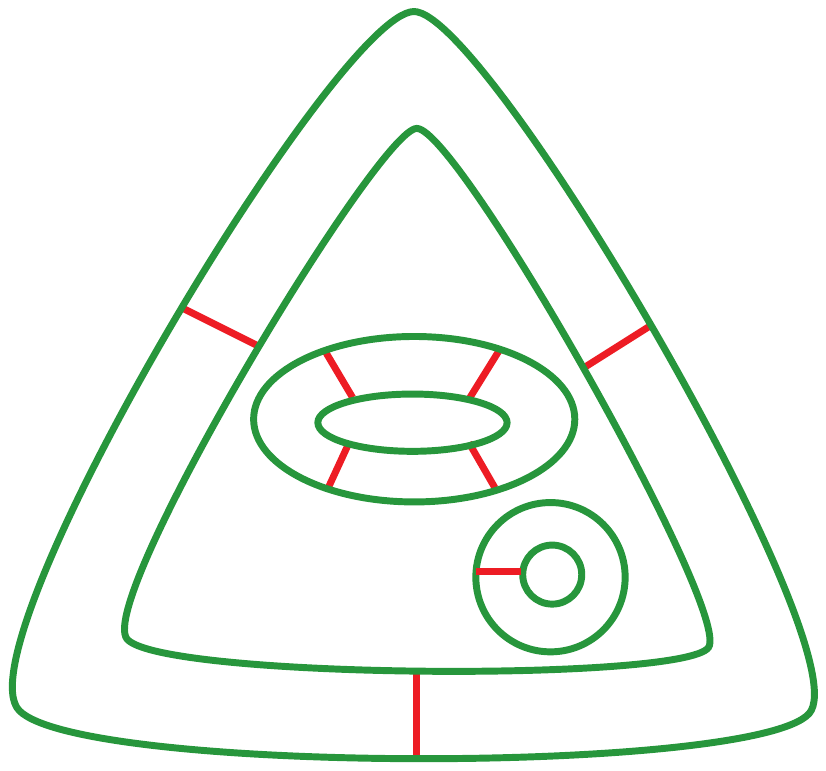} \raisebox{15mm}{\includegraphics[width=1cm]{arrow} } \includegraphics[height=3cm]{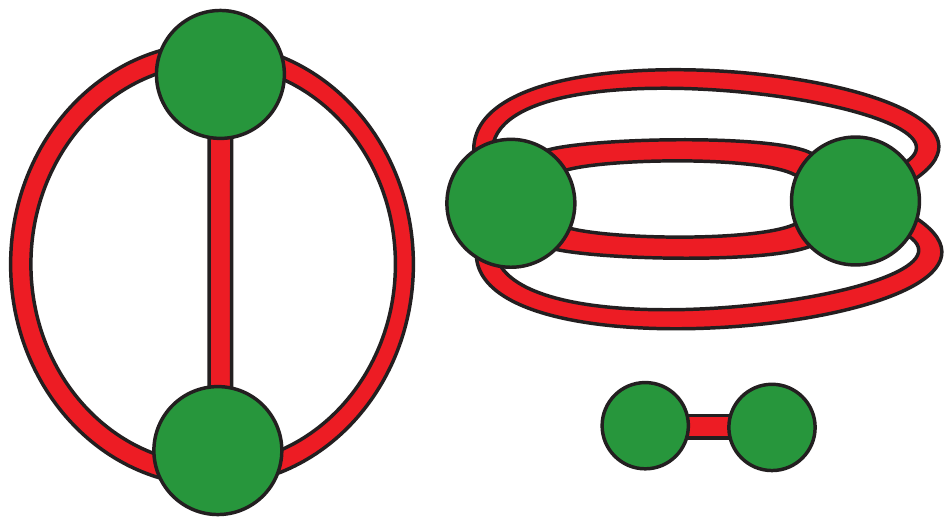}\\
  The $1$-summands arising from $A$. . 
\end{tabular}

\begin{tabular}{c}
\\
 \includegraphics[height=3.5cm]{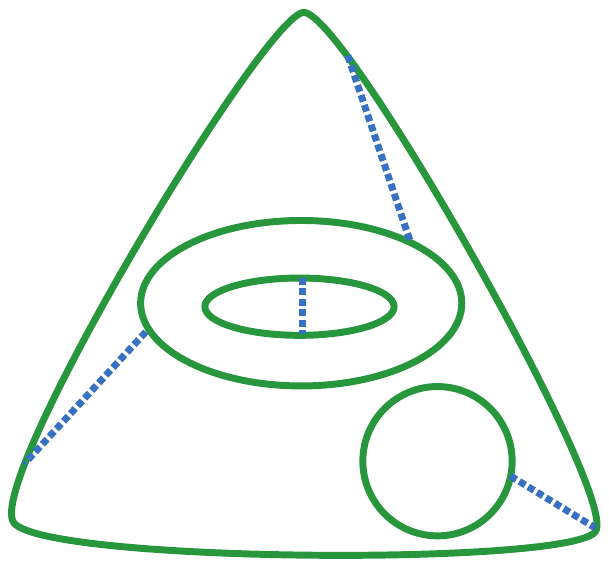} \raisebox{15mm}{\includegraphics[width=1cm]{arrow} }  \includegraphics[height=3cm]{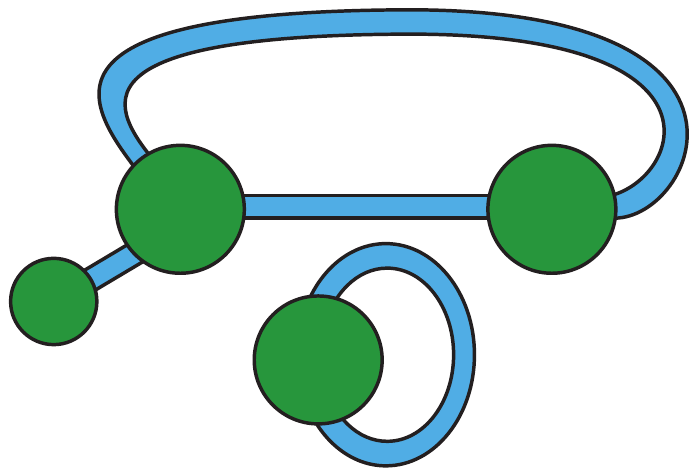} \\
   The $1$-summands arising from $E(G)\backslash A$. 
\end{tabular}
\begin{tabular}{c}
\\
  \includegraphics[width=8cm]{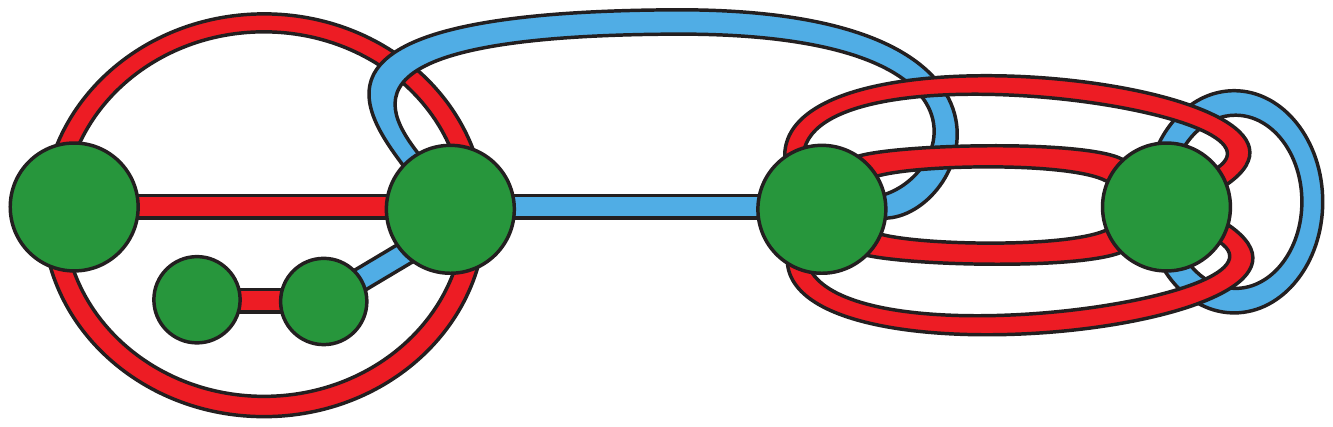}\\
  The partial dual $G^A$. 
\end{tabular}

\end{center}
\caption{An example of a construction used in the proof of Theorem~\ref{t.pchar1}.}
\label{f.c}
\end{figure}

\begin{proof} Without loss of generality, assume that $G$ is connected.

First, suppose that $G^A$ is a plane ribbon graph. Cellularly embed $G^A$ in the plane and form an arrow presentation for $(G^A)^A=G$ by taking the set of closed curves that follow the boundary components of the spanning subgraph $(V(G),A)$, and marking them with labelled arrows in the  way  described in Definition~\ref{d.pd}. This results in a set of marked non-intersecting closed plane curves, each of which corresponds to a vertex of $G$. Denote this set of marked plane curves by $\mathcal{S}$.   
Each labelled arrow in $\mathcal{S}$ touches an edge of $G$ and is labelled by that edge. Whenever a pair of arrows touches the same edge draw a line through that edge that connects  the two arrows (see Figure~\ref{f.c}). We call these lines the {\em labelling lines}.

Each closed plane curve divides the plane into a bounded  interior and an unbounded exterior region. If $c$ and $c'$ are closed plane curves, then we say that $c'$ is {\em included} in $c$ if $c'$ is in the interior of $c$, and that  $c'$ is {\em directly included} in $c$ if it is included in $c$ and there is no other closed plane curve  that is included in $c$ and includes $c'$. 
Furthermore, we say that a labelling line is  {\em directly included} in $c$ if it lies in the region of the plane bounded by $c$ and the curves that are directly included in $c$. 

Let $S_0$ be the set of marked plane curves obtained by taking all curves in $\mathcal{S}$ that meet the unbounded region of the plane and removing all arrows that are not connected by a labelling line that lies in the unbounded region.
If $S_0=\mathcal{S}$, then letting $Q_A$ be the ribbon subgraph corresponding to the arrow presentation $S_0$ and $P_A=\emptyset$ gives the required \pb of $G$. 

Otherwise, let $c_1, \ldots, c_r$ denote the marked plane curves in  $\mathcal{S}$. For each $i=1,\ldots, r$, let $S_i$  denote set of marked plane curves obtained by taking  $c_i$ and all curves directly included in it, and then removing all arrows that are not connected by a labelling line directly included in $c_i$.  

Let $\{S_{\iota_1}, \ldots , S_{\iota_s}\}$ be the set of marked curves obtained by deleting any sets from $\{S_{1}, \ldots , S_{r}\}$ that contain curves without marking arrows on them. 
Each $S_i$, for $i\in \{0, \iota_1, \ldots , \iota_r\}$,  is an arrow presentation that is equivalent to a plane ribbon subgraph of $G$.
It is clear from the construction that  $G$ can be written as sequence of $1$-sums of  $S_0,S_{\iota_1}, \ldots , S_{\iota_s}$.  Furthermore, for each $i$ and $j$,  $S_i$ and $S_j$ can either be disjoint or intersect in a single vertex; all edges of $S_i$ are either in $A$ or they are all in $E(G)\backslash A$; and  if $S_j$ shares a vertex with $S_i$, then exactly one of $S_i$ and $S_j$ will have all its edges in $A$ and the other will have all its edges in $E(G)\backslash A$. Thus, letting $P_A$ consist of all of the $S_i$ with edges in $A$, and letting $Q_A$ consist of the remaining $S_i$ gives the required \pbt.
(An example of the above argument is given in Figure~\ref{f.c}.)

Conversely, suppose that $A$ determines a \pb of $G$. 
 We will prove that $G^A$ is plane by induction on the length of a \pbt.
 If $A$ determines a \pb of length $1$ the result is trivial, and 
  if $A$ determines a \pb of length $2$ then $G^A$ is plane by Lemma~\ref{l.pchar3}.

Now suppose that the assertion holds for all ribbon graphs and edge sets that define a \pb of length less than $l$.

Suppose that $G$ is a ribbon graph, $A\subseteq E(G)$, and
$ H_1 \oplus H_2 \oplus \cdots \oplus H_l  $
is the sequence of $1$-sums in the \pb determined by $A$. Suppose also that $H_l$ and $H_i$ share a vertex.
Without loss of generality, let $E(H_l)\subseteq A$. (We may assume this  since $G^A$ is plane if and only if $G^{E(G)\backslash A}=(G^A)^*$ is, and since $A$ determines a \pb if and only if $E(G)\backslash A$ does.) Then 
\begin{multline*}
G^{E(H_l)} = (H_1 \oplus H_2 \oplus \cdots \oplus H_{l-1} \oplus H_l )^{E(H_l)} \\
  = (H_1 \oplus H_2 \oplus \cdots  \oplus (H_i \oplus H_l)  \oplus \cdots       \oplus H_{l-1} )^{E(H_l)} \\
   = (H_1 \oplus H_2 \oplus \cdots  \oplus (H_i \oplus H_l)^{E(H_l)}  \oplus \cdots       \oplus H_{l-1} ),
\end{multline*}
where the second and third equalities use the facts that  $H_l$ is the last term in the sequence of $1$-sums, and that, by Lemma~\ref{l.pchar3}, if the $1$-sum $H_i \oplus H_l$ occurs at $v$ then all other vertices of $H_i$ are also vertices of $ (H_i \oplus H_l)^{E(H_l)}$.
Also by Lemma~\ref{l.pchar3}, $(H_i \oplus H_l)^{E(H_l)}$ is plane so  the above expression gives  \pb of $G^{E(H_l)}$  of  length $l-1$. Moreover, this \pb is determined by $A\backslash  E(H_l) \subseteq E(G^{E(H_l)})$. By the inductive hypothesis, it then follows that $  (G^{E(H_l)})^{A\backslash E(H_l) }= G^A $  is plane, as required.
\end{proof}

We conclude this section by noting a few immediate corollaries of Theorem~\ref{t.pchar1}.
\begin{corollary}
An embedded graph $G$ is a partial dual of a plane graph if and only if there exists a \pb of $G$.
\end{corollary}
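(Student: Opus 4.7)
The plan is to deduce this corollary directly from Theorem~\ref{t.pchar1} by simply existentially quantifying over the subset $A \subseteq E(G)$ on both sides. Theorem~\ref{t.pchar1} states an ``$A$-by-$A$'' equivalence: $G^A$ is plane if and only if $A$ defines a \pb of $G$. The corollary is the same statement with the $A$ suppressed, so no new geometric content is required; the work is purely to unwind definitions and apply the involutivity of partial duality.

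For the forward direction, I would start with a partial dual $G$ of a plane graph. By definition this means there exist a plane ribbon graph $H$ and a subset $A \subseteq E(H)$ such that $H^A = G$. Applying Proposition~\ref{p.pd2}(\ref{p.pd2.1})--(\ref{p.pd2.3}), I get $G^A = (H^A)^A = H^{A \Delta A} = H^\emptyset = H$, which is plane. Using the natural identification of $E(G)$ with $E(H)$ under partial duality, I can view $A$ as a subset of $E(G)$. Theorem~\ref{t.pchar1} then tells me that $A$ defines a \pb of $G$, so $G$ admits a \pbt.

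For the converse, suppose $G$ admits a \pbt, witnessed by some $A \subseteq E(G)$. By Theorem~\ref{t.pchar1}, $G^A$ is a plane ribbon graph. Then Proposition~\ref{p.pd2}(\ref{p.pd2.3}) gives $(G^A)^A = G^{A \Delta A} = G^\emptyset = G$, so $G$ is exhibited as the partial dual (with respect to $A$) of the plane graph $G^A$. The identification of ribbon graphs with embedded graphs from Section~\ref{s.emb} lets me phrase the whole argument in either language interchangeably.

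There is no real obstacle here: the corollary is a bookkeeping consequence of Theorem~\ref{t.pchar1} together with the fact that partial duality is an involution on any fixed edge subset. The only small care needed is to remember that $G$ and $G^A$ share their edge set under the canonical bijection, so that the same subset $A$ can be read on either side of the partial-dual operation.
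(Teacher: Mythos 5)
Your proof is correct and matches the paper's intent exactly: the paper states this as an immediate corollary of Theorem~\ref{t.pchar1}, with the involutivity of partial duality (Proposition~\ref{p.pd2}) supplying the translation between ``$G$ is a partial dual of a plane graph'' and ``$G^A$ is plane for some $A$.'' Your careful handling of the edge-set identification between $G$ and $G^A$ is exactly the right bookkeeping, and nothing further is needed.
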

\begin{corollary}\label{c.pchar3} Let $k\geq 2$.
If a ribbon graph $G$ contains a $k$-connected, non-plane ribbon subgraph, then $G$ is not a partial dual of a plane graph.  
\end{corollary}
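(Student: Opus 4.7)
The plan is to argue contrapositively via Theorem~\ref{t.pchar1}. Assume that $G$ is a partial dual of a plane graph; then by Theorem~\ref{t.pchar1}, $G$ admits a \pbt, and hence by Definition~\ref{d.d} can be written as a sequence of $1$-sums $G = H_1 \oplus H_2 \oplus \cdots \oplus H_l$ where each $H_i$ is a plane ribbon graph. It then suffices to show that every $k$-connected ribbon subgraph $H$ of $G$ (with $k \geq 2$) must be plane.

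The central step is to prove by induction on $l$ that any such $H$ is contained entirely within a single summand $H_i$. The base case $l=1$ is immediate. For the inductive step, consider the final $1$-sum in the sequence, say $G = G' \oplus H_l$ occurring at a vertex $v$, where $G' := H_1 \oplus \cdots \oplus H_{l-1}$. If $H$ contained edges from both $G'$ and $H_l$, then since $G' \cap H_l = \{v\}$, every path in $H$ joining such an $H_l$-edge to such a $G'$-edge would have to pass through $v$. This would make $v$ a cut vertex of $H$, contradicting the hypothesis that $H$ is at least $2$-connected. Hence either $H \subseteq G'$, in which case the inductive hypothesis applies, or $H \subseteq H_l$ directly.

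Once $H \subseteq H_i$ has been established, observe that the cellular embedding of the plane ribbon graph $H_i$ into $S^2$ restricts to an embedding of the ribbon subgraph $H$ into $S^2$, so $H$ is itself plane. This contradicts the assumption that $H$ is non-plane, completing the proof of the contrapositive and hence of the corollary.

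The main obstacle will be making the cut-vertex argument rigorous in the presence of potentially degenerate configurations at the $1$-sum vertex $v$ — for example, when all edges of $H$ on one side of the $1$-sum are incident to $v$. Careful use of the hypothesis that $H$ is $k$-connected for $k \geq 2$, together with the restriction in Definition~\ref{d.d} that each $1$-sum involves components of $P_A$ and of $Q_A$ meeting only at their shared separating vertex, should be enough to rule these edge cases out.
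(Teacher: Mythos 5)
Your proposal is correct and is essentially the paper's own (implicit) argument: the paper states this as an immediate corollary of Theorem~\ref{t.pchar1}, the point being exactly that a \pb expresses $G$ as a sequence of $1$-sums of plane ribbon graphs, that a $k$-connected subgraph with $k\geq 2$ must lie inside a single $1$-summand, and that a ribbon subgraph of a plane ribbon graph is plane. Regarding the degenerate case you flag (all edges of $H$ on one side being loops at the $1$-sum vertex $v$): this is settled by reading $k$-connectivity in the separation sense the paper itself invokes --- it notes that a $1$-sum vertex is a \emph{separating vertex, but not necessarily a cut vertex}, and since a loop at $v$ yields a $1$-separation of $H$ at $v$, no such $H$ can be $2$-connected, so the case cannot arise.
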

\begin{corollary}\label{c.pchar5}
Partial duals of plane graphs are planar.
\end{corollary}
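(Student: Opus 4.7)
The plan is to leverage Theorem~\ref{t.pchar1} to reduce the corollary to a classical property of planar abstract graphs, namely that planarity is preserved under $1$-sums. First I want to stress that ``planar'' here refers to the underlying abstract graph admitting an embedding in the plane, not to the ribbon graph itself being plane as an embedded graph (which need not hold, since partial duality can change the genus). Suppose $G$ is a partial dual of a plane graph, so there exists $A\subseteq E(G)$ such that $G^A$ is plane. Then by Theorem~\ref{t.pchar1}, $A$ defines a \pbt\ of $G$.

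Next I would unpack Definition~\ref{d.d}: the \pbt\ presents each connected component of $G$ as a sequence of $1$-sums
\[
G = H_1 \oplus H_2 \oplus \cdots \oplus H_l,
\]
where each $H_i$ is a plane ribbon subgraph of $G$. Since a plane ribbon graph has a planar underlying abstract graph, each $H_i$ is planar in the abstract-graph sense.

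The remaining step is to verify that the class of planar abstract graphs is closed under $1$-sums. I would argue as follows: given planar graphs $K$ and $J$ and a vertex $v \in V(K) \cap V(J)$ at which the $1$-sum is formed, choose planar embeddings of $K$ and $J$ for which $v$ lies on the outer face (this can always be arranged by selecting an appropriate face to be the outer one); then glue the two embeddings along $v$ in a small neighbourhood of that vertex to obtain a planar embedding of $K \oplus J$. An easy induction on $l$, using the identity $H_1 \oplus \cdots \oplus H_l = (H_1 \oplus \cdots \oplus H_{l-1}) \oplus H_l$, then shows that the underlying abstract graph of $G$ is planar.

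The proof is essentially a routine reassembly, so I do not expect a genuine obstacle. The only care needed is (i) to distinguish ``plane'' from ``planar'' and (ii) to observe that although the \pbt\ structure is constructed for the ribbon graph $G$, the $1$-sum operation on ribbon graphs agrees with the $1$-sum on underlying abstract graphs, which is what allows us to pass from the topological decomposition provided by Theorem~\ref{t.pchar1} to the purely combinatorial statement about planarity.
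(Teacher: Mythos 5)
Your proof is correct and follows exactly the route the paper intends: the corollary is stated there as an immediate consequence of Theorem~\ref{t.pchar1}, with the implicit argument being precisely your reduction to a \pb into plane (hence planar) $1$-summands together with the classical fact that planarity of abstract graphs is preserved under $1$-sums. Your care in distinguishing ``plane'' from ``planar'' and in noting that ribbon-graph $1$-sums agree with abstract $1$-sums is exactly the right bookkeeping, so nothing is missing.
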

\begin{corollary}\label{c.pchar4}
If a graph contains a $K_5$- or $K_{3,3}$-minor then no embedding of it   is  a partial dual of a plane graph. 
\end{corollary}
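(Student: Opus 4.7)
The plan is to derive Corollary~\ref{c.pchar4} as a short consequence of Corollary~\ref{c.pchar5} combined with a classical theorem of graph minor theory, namely Wagner's theorem (equivalently, Kuratowski's theorem). Since the hypothesis concerns the underlying abstract graph (containing a $K_5$- or $K_{3,3}$-minor), and the conclusion is about all possible embeddings, I want to reduce the statement to a purely abstract planarity condition on the underlying graph and then invoke Corollary~\ref{c.pchar5}.

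Concretely, suppose that an abstract graph $\Gamma$ contains $K_5$ or $K_{3,3}$ as a minor. I would first recall Wagner's theorem, which states that a graph is planar if and only if it contains neither $K_5$ nor $K_{3,3}$ as a minor. Hence $\Gamma$ is non-planar. Now let $G$ be any embedded graph (equivalently, any ribbon graph) whose underlying abstract graph is $\Gamma$. I then argue by contradiction: if $G$ were a partial dual of a plane graph, then Corollary~\ref{c.pchar5} would force $G$ to be planar, meaning its underlying abstract graph admits an embedding in the sphere. But this underlying graph is $\Gamma$, which we have just observed is non-planar, a contradiction. Therefore no embedding of $\Gamma$ can be a partial dual of a plane graph.

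There is essentially no obstacle to overcome here, as all the substantive work has already been carried out in Theorem~\ref{t.pchar1} and its consequence Corollary~\ref{c.pchar5}: the characterization of partial duals of plane graphs via plane-biseparations is what guarantees that a ribbon graph admitting such a decomposition is built from plane ribbon graphs by $1$-sums, and $1$-sums of planar abstract graphs are planar, so the underlying graph of any partial dual of a plane graph must itself be planar. With that in hand, the present corollary is just the contrapositive of this planarity statement, translated via Wagner's theorem into the language of forbidden minors.
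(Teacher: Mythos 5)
Your proposal is correct and follows essentially the same route the paper intends: the paper presents Corollary~\ref{c.pchar4} as an immediate consequence of Theorem~\ref{t.pchar1} via Corollary~\ref{c.pchar5} (a partial dual of a plane graph decomposes by a plane-biseparation into $1$-sums of plane ribbon graphs, so its underlying abstract graph is planar), with the forbidden-minor formulation obtained exactly as you do, by applying Wagner's (minor form of Kuratowski's) theorem in the contrapositive. No gap; your additional remark explaining why $1$-sums of planar graphs are planar is precisely the implicit content of Corollary~\ref{c.pchar5}.
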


\subsection{ Relating \pbs of $G$ }
By Theorem~\ref{t.pchar1}, we see that understanding  partial duals of  plane graphs is equivalent to understanding \pbst. In this subsection we determine how all \pbs of a ribbon graph  are related to each other.

In general, a ribbon graph can admit many \pbst.
However, it turns out that any two \pbs of a ribbon graph  are related in a very simple way. Roughly speaking, the only choice that one can make in the construction of a  \pb is whether to place a  join-summand in  $P_A$ or in $Q_A$. In this subsection we will make this statement precise and relate all of the different \pbs that a ribbon graph $G$ admits.

We begin by defining  an operation on a subset of edges of a ribbon graph  that will allow us to relate all of the \pbs  that it admits.

\begin{definition}
For  $r\geq 1$, suppose that   $G=K_1 \vee K_2 \vee \cdots \vee K_r$, is a ribbon graph.  Let  $A\subseteq E(G)$ and $A'=A\Delta E(K_i)$, for some $i$. We say that $A$ and $A'$ are 
related by {\em toggling a  join-summand}. (See Figure~\ref{f.t}.)
\end{definition}

\begin{figure}


\includegraphics[height=30mm]{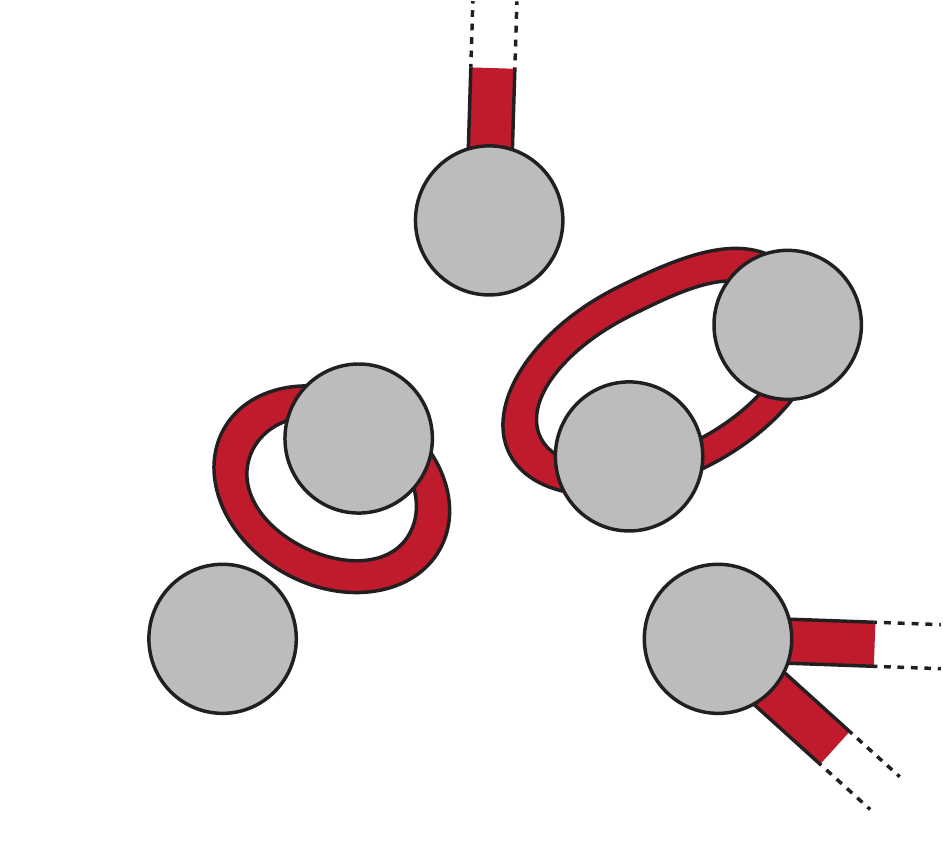}
\raisebox{12mm}{\rotatebox{180}{\includegraphics[width=1cm]{arrow}}}
\includegraphics[height=30mm]{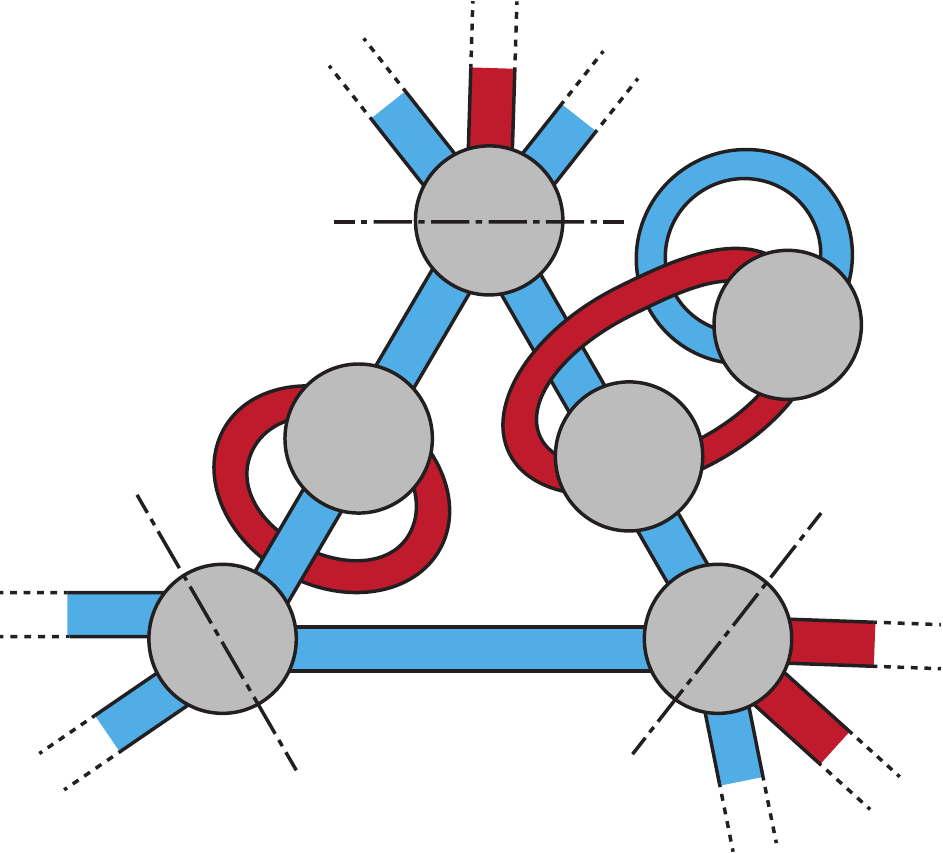}

~\hspace{4cm}
\raisebox{10mm}{\includegraphics[width=1cm]{doublearrow}}
\includegraphics[height=30mm]{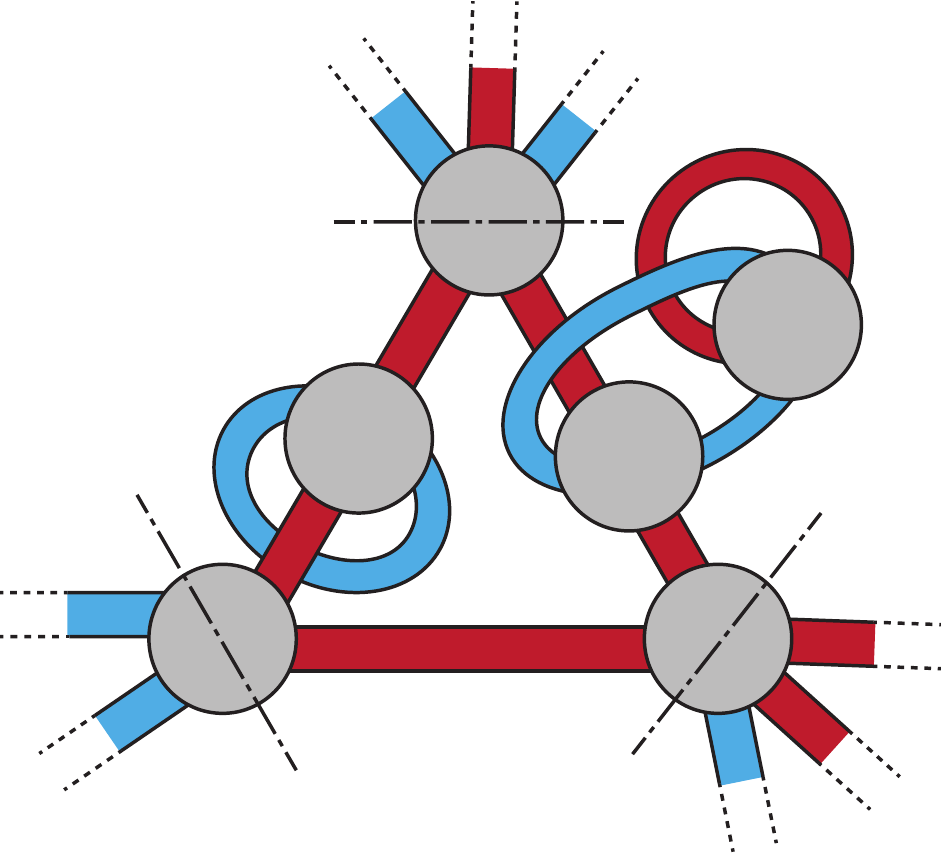}
\raisebox{10mm}{\includegraphics[width=1cm]{arrow}}
\includegraphics[height=30mm]{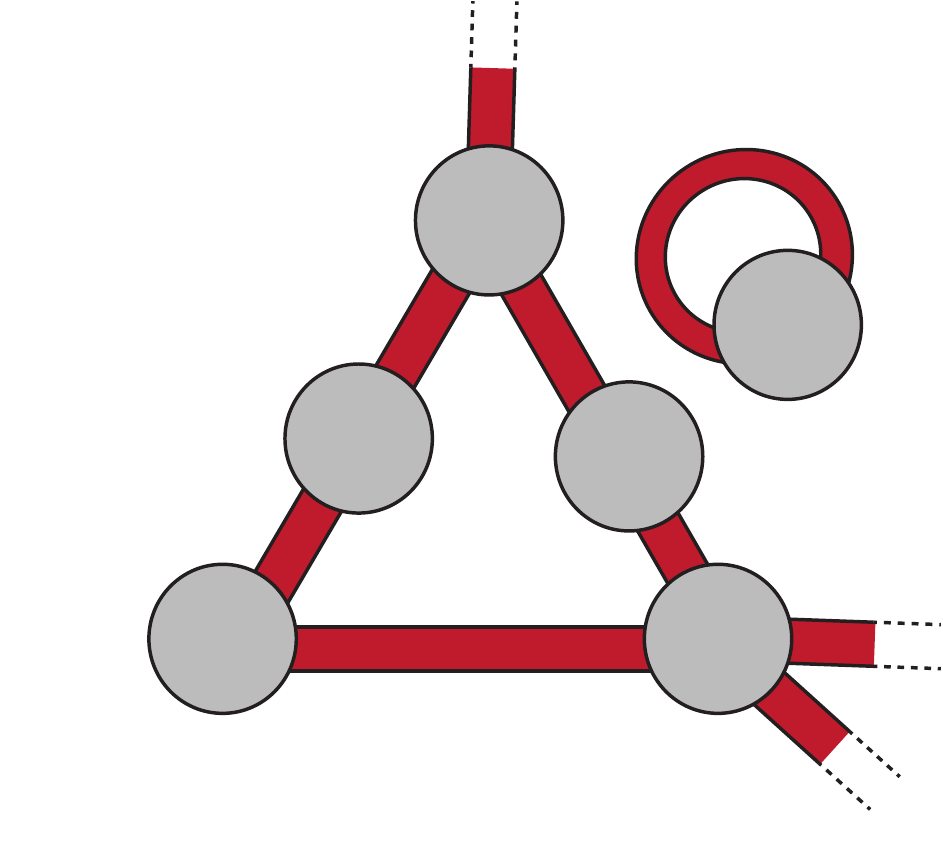}
\caption{Toggling a  join-summand, and the  ribbon subgraphs induced by $A$ and $A'$.
}
\label{f.t}
\end{figure}

The following theorem provides a simple move that relates all of the \pbs admitted by a ribbon graph.
\begin{theorem}\label{t.pbs}
Suppose that  $G$ is  a ribbon graph and that $A, B\subseteq E(G)$ both define \pbst. Then $A$ and $B$ are related by toggling join-summands. 
\end{theorem}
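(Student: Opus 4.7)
The plan is to reduce the problem to a join decomposition of the plane ribbon graph $G^A$, and then transport that decomposition back to $G$ through Lemma~\ref{sum5}. All the necessary ingredients are already available: Theorem~\ref{t.pchar1} (which detects plane partial duals), Proposition~\ref{p.v1r} (which converts plane-biseparations of a plane graph into plane-join-biseparations), and Lemma~\ref{sum5} (which states that partial duality acts componentwise on joins).

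First, I would observe that since $A$ and $B$ both define plane-biseparations, Theorem~\ref{t.pchar1} gives that $G^A$ and $G^B$ are both plane. Set $C := A \Delta B$. By Proposition~\ref{p.pd2}(3),
\[
(G^A)^C \;=\; G^{A \Delta C} \;=\; G^{A \Delta A \Delta B} \;=\; G^B,
\]
which is plane. Thus Theorem~\ref{t.pchar1} applied to $G^A$ shows that $C \subseteq E(G^A)$ defines a plane-biseparation of $G^A$. Because $G^A$ is itself plane, Proposition~\ref{p.v1r} upgrades this to a plane-join-biseparation of $G^A$: there exist a join decomposition $G^A = H_1 \vee H_2 \vee \cdots \vee H_l$ with each $H_i$ plane, and a subset $I \subseteq \{1,\ldots,l\}$, such that $C = \bigcup_{i \in I} E(H_i)$.

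Next, I would pull this join decomposition back to $G$. Iterating Lemma~\ref{sum5} (which is a straightforward induction on $l$) gives
\[
G \;=\; (G^A)^A \;=\; H_1^{A_1} \vee H_2^{A_2} \vee \cdots \vee H_l^{A_l},
\]
where $A_i := A \cap E(H_i)$. Setting $K_i := H_i^{A_i}$, this is a genuine join decomposition $G = K_1 \vee K_2 \vee \cdots \vee K_l$, and since partial duality preserves edge sets we have $E(K_i) = E(H_i)$; in particular $C = \bigcup_{i \in I} E(K_i)$.

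Finally, using the fixed join decomposition $G = K_1 \vee \cdots \vee K_l$, I would toggle the join-summands $K_i$ with $i \in I$ one at a time. Each step is a legitimate toggling operation, and the cumulative effect on $A$ is
\[
A \;\longmapsto\; A \Delta \bigcup_{i \in I} E(K_i) \;=\; A \Delta C \;=\; B,
\]
as required. The one step that is easy to overlook---and, I expect, the only real obstacle---is to recognize that one should cross over to the plane graph $G^A$ so that Proposition~\ref{p.v1r} becomes applicable; once this move is made, Lemma~\ref{sum5} carries the resulting join decomposition back to $G$ at no extra cost, and the theorem follows without further work.
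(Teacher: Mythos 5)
Your argument is correct, and it takes a genuinely different route from the paper's. The paper proves Theorem~\ref{t.pbs} structurally: it reduces to connected $G$, invokes the unique factorization of $G$ into prime join-summands (Proposition~\ref{sw1}), and proves the key Lemma~\ref{sw2} --- a prime connected ribbon graph admits either no \pb or exactly two, complementary ones --- by a local analysis of interlacing blocks of half-edges at each vertex; every \pb is then an independent binary choice on each prime factor, and any two such choices differ by toggling. You instead conjugate by a partial dual: since $(G^A)^{A\Delta B}=G^B$ is plane, Theorem~\ref{t.pchar1} applied to $G^A$ shows that $C=A\Delta B$ defines a \pb of the \emph{plane} graph $G^A$; Proposition~\ref{p.v1r} upgrades this to a \jpbt, and iterating Lemma~\ref{sum5} transports the join decomposition back to $G=(G^A)^A$ with edge sets unchanged, so toggling the summands carrying $C$ takes $A$ to $A\Delta C=B$ (the successive symmetric differences combine to a single one because distinct join-summands have disjoint edge sets). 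All the ingredients you use precede Theorem~\ref{t.pbs} in the paper and are independent of it, so there is no circularity. The trade-off: the paper's route yields strictly more information --- a parametrization of \emph{all} \pbs of $G$ by two choices per prime join-summand, from which toggle-equivalence falls out --- whereas your route is shorter, bypasses the delicate interlacement analysis of Lemma~\ref{sw2} entirely, and anticipates the duality mechanism the paper itself deploys later for Lemma~\ref{sum6} and Theorem~\ref{t.sim}. Two small points to make explicit in a write-up: reduce to connected $G$ at the outset, since both duality and \pbs act componentwise and the statements of Proposition~\ref{p.v1r} and Definition~\ref{d.js} are cleanest there; and note that the pulled-back summands $K_i=H_i^{A_i}$ need not be plane, which is harmless because the definition of toggling imposes no planarity on the join-summands (indeed each intermediate set still defines a \pbt, since each toggle merely replaces $H_i$ by $H_i^*$ in the plane partial dual).
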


Example~\ref{e.deco4} gave a \pb of a ribbon graph $G$. Two other \pbs of this ribbon graph are shown in Figure~\ref{f.e7}. It is easily checked that that the edge sets defining the three \pbs are related by  toggling join-summands.

\begin{figure}
\begin{center}
\begin{tabular}{c|c}
 \includegraphics[height=6cm]{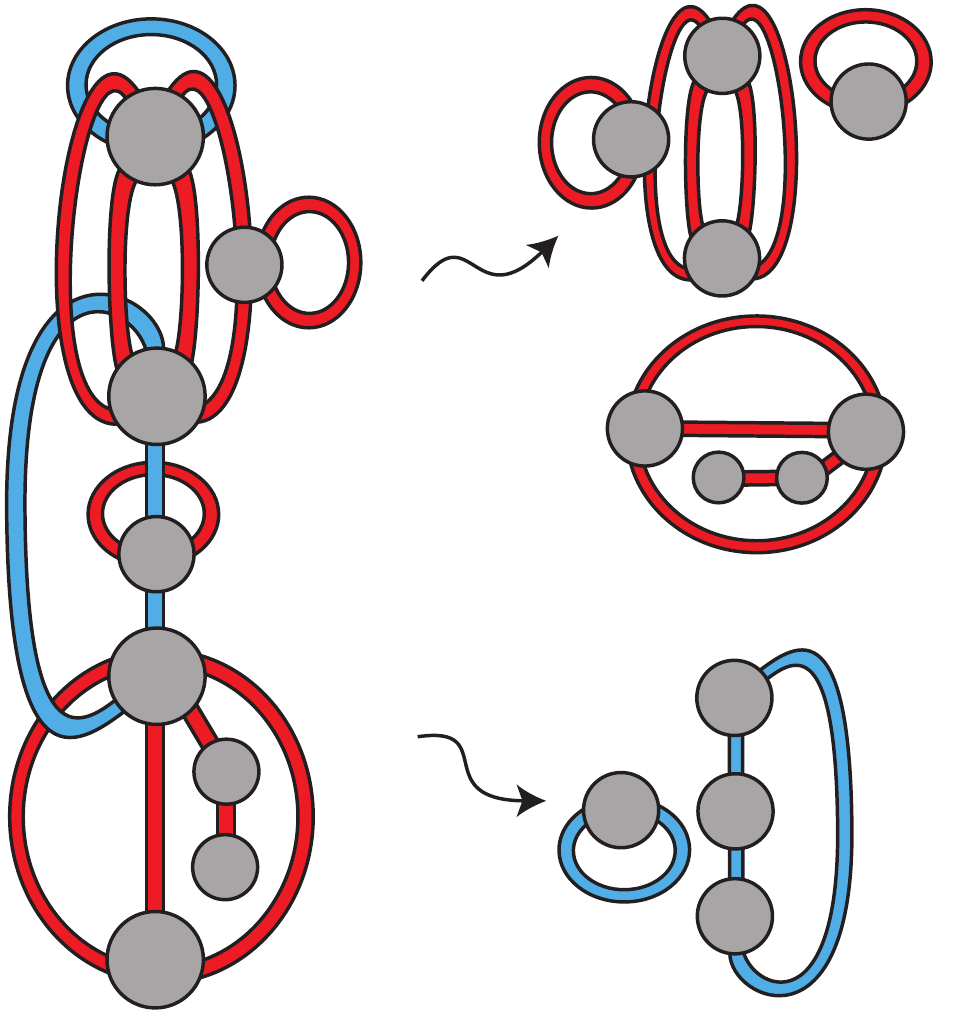}  \hspace{5mm}& \hspace{5mm}
\includegraphics[height=6cm]{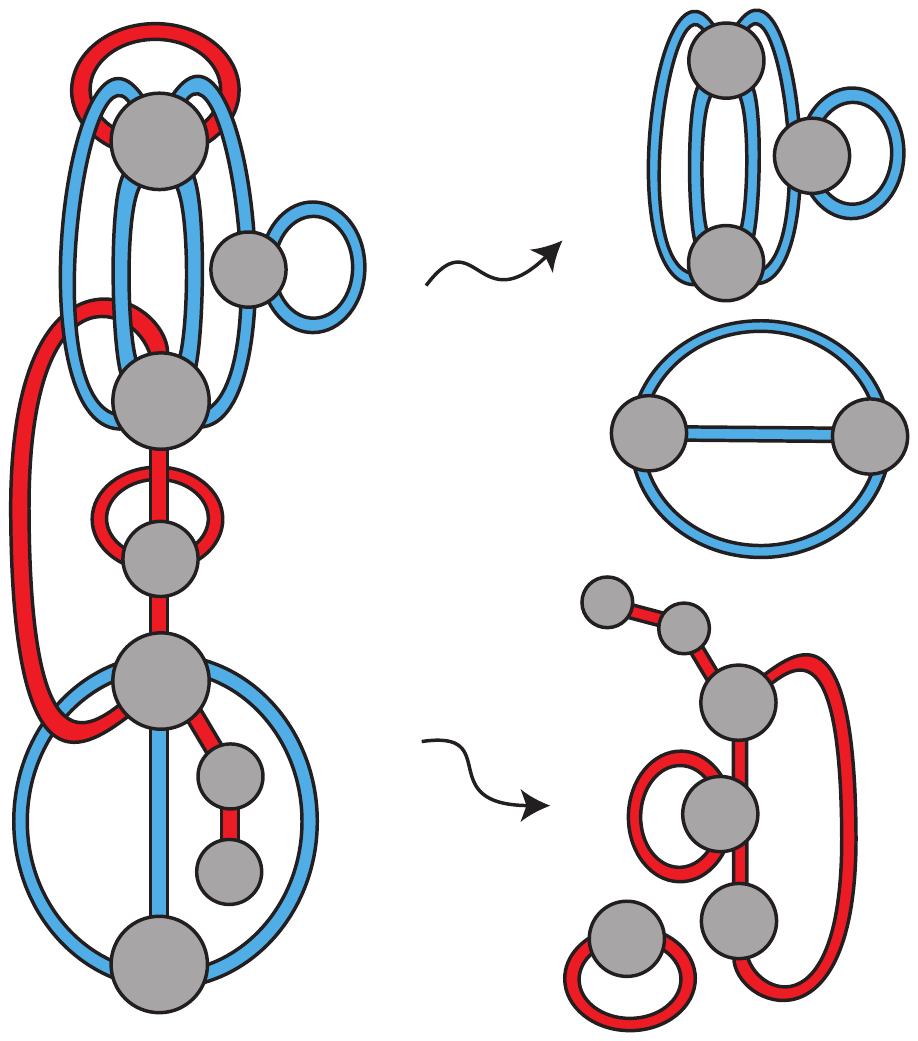}
\end{tabular}
\end{center}
\caption{Two \pbs of a ribbon graph.}
\label{f.e7}
\end{figure}

We give  some preliminary results needed to prove  Theorem~\ref{t.pbs}.

\begin{definition}
A ribbon graph is  {\em prime} if it can not be expressed as the join of two  ribbon graphs.
\end{definition}

The following result states that every ribbon graph admits a unique factorization into   prime ribbon subgraphs. 

\begin{proposition}\label{sw1}
Every ribbon graph $G$ can be written as a sequence of join-sums  
$ G= H_1 \vee \cdots \vee H_l $, 
where  each $H_i$ is prime, and $l\geq 1$. 
 Moreover, any other expression of $G$ as joins of prime join-summands can only differ from this by the order of the join-summands in the sequence.
\end{proposition}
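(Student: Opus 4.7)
The plan is to prove existence by induction on $|E(G)|$ and uniqueness via a common refinement lemma for join decompositions. For existence, an edgeless ribbon graph cannot be written as a non-trivial join (each summand of a join is required to be non-trivial), so its prime factorization is itself. For the inductive step, either $G$ is prime (take $\ell = 1$), or $G = P \vee Q$ with both summands non-trivial; since the join partitions the edge set, $|E(P)|, |E(Q)| < |E(G)|$, and applying the inductive hypothesis to both summands and substituting yields the desired factorization of $G$.

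For uniqueness, the core ingredient I would establish is a common refinement lemma: if $G = P \vee Q = P' \vee Q'$ are two non-trivial join decompositions, then either the two decompositions coincide as unordered pairs, or there is a finer decomposition $G = H_1 \vee \cdots \vee H_r$ with $r \in \{3,4\}$ such that each of the two given decompositions is obtained by grouping together contiguous $H_i$. I would prove this lemma by case analysis on the join vertices $v$ and $v'$. When $v \neq v'$, I would argue that $v'$ lies strictly inside one of $P$ or $Q$, say $Q$; the join condition at $v'$ then restricts to a join decomposition $Q = Q_1 \vee Q_2$ (since $P$ is unaffected by any structure outside $v$), and the three-part refinement $G = P \vee Q_1 \vee Q_2$ follows. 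When $v = v'$, the two decompositions correspond to two pairs of ``cut points'' in the cyclic arrangement of half-edges around $v$, and the four (or three, when the pairs share a point) resulting arcs determine the summands $H_i$.

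With the common refinement lemma in hand, uniqueness follows by a standard exchange argument. Given two prime factorizations $G = H_1 \vee \cdots \vee H_\ell = K_1 \vee \cdots \vee K_m$, the lemma applied to $G = H_1 \vee (H_2 \vee \cdots \vee H_\ell)$ and $G = K_1 \vee (K_2 \vee \cdots \vee K_m)$ produces a common refinement. Since any strict refinement of $H_1$ or $K_1$ would contradict primality, $H_1$ must coincide with some $K_j$; rearranging the $K_i$'s using the associativity of join at distinct vertices allows one to cancel the matching factor and induct on $\ell$.

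The main obstacle I expect is the $v = v'$ case of the common refinement lemma, specifically verifying that the four cyclic arcs of half-edges around $v$ really do extend to ribbon subgraphs that pairwise meet only at $v$. The approach I have in mind is: for any two half-edges $e, e'$ at $v$ lying in distinct arcs, a path in $G \setminus v$ connecting them would, together with a suitable boundary arc of $v$, produce a connection between the two pieces of one of the two given joins, contradicting one of the original join conditions. An alternative route that may streamline the bookkeeping is to invoke Lemma~\ref{sum5} to transport the problem through partial duality, where the join structure can be read off directly from the arrow presentation.
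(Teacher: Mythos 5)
The paper gives no argument to compare against here: it dismisses the proposition with ``the straightforward proof of this proposition is omitted.'' So your proposal must be judged on its own. Its core --- existence by induction on edges, and uniqueness via the partition of the half-edges at a vertex into blocks (two half-edges in a block when a path avoiding the vertex connects them), with each block forced to lie in a single boundary arc of each join --- is essentially the same combinatorial engine the paper does write out in its proof of Lemma~\ref{sw2}, so your route is in the intended spirit, and your common refinement lemma is correct and provable as you sketch, including the $v=v'$ case (the two pairs of cut points cut the cyclic order of half-edge ends into at most four segments, each a union of blocks, and both decompositions are contiguous groupings). However, there are two genuine problems. First, the launch of your exchange argument is illegal in general: you apply the refinement lemma to $G = H_1 \vee (H_2 \vee \cdots \vee H_\ell)$, but a sequence of joins is bracketed the other way, $(\cdots((H_1 \vee H_2)\vee H_3)\cdots)\vee H_\ell$, and $H_3,\ldots,H_\ell$ may attach at vertices of $H_1$; then $H_2\cup\cdots\cup H_\ell$ meets $H_1$ in more than one vertex and is not a join-summand at all --- this is exactly the paper's warning that only \emph{some} reorderings of summands in a sequence of $1$-sums are possible. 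The fix is to peel off the \emph{last} factor instead: $G = (H_1 \vee \cdots \vee H_{\ell-1}) \vee H_\ell$ is always a legitimate two-part join, so run the exchange on the final summands of the two factorizations and induct on the prefixes.

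Second, connectivity. As literally stated for all ribbon graphs, uniqueness \emph{fails}, so no proof can deliver it: take $G = A \sqcup B$ with $A$, $B$ prime and connected, each with an edge. Then $G = (A \sqcup \{w\}) \vee B$ with the join at $w \in V(B)$ (the summand $A \sqcup \{w\}$ has no edges at $w$, so the arc condition is vacuous), and also $G = A \vee (B \sqcup \{u\})$ with $u \in V(A)$; these yield prime factorizations differing by more than order, and if edgeless two-vertex subgraphs count as non-trivial, the attachment vertex of the edgeless ``bridge'' piece is non-canonical, giving yet more factorizations. Your proof quietly depends on connectivity in two places: the edge-count induction stalls if a non-trivial join-summand is edgeless (possible only in the disconnected setting), and your $v \neq v'$ case uses that $P$ is connected to conclude $P$ lies wholly inside one part of the decomposition at $v'$. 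All of this evaporates once you first reduce to connected $G$ --- where every $1$-summand is connected and a non-trivial connected summand contains an edge --- and the restriction is harmless for the paper, which invokes the proposition only after reducing to connected ribbon graphs in the proof of Theorem~\ref{t.pbs}. So: state and prove the result for connected $G$ (or explicitly treat components separately without claiming uniqueness of the cross-component gluing), peel joins from the right in the exchange step, and your argument goes through.
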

The straight forward proof of this proposition is omitted.

We also need the following lemma.
\begin{lemma}\label{sw2}
Let $G$ be a prime, connected ribbon graph. Then either $G$ does not admit a \pbt, or it  admits exactly two. 
\end{lemma}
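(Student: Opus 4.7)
The plan is to show that if $A$ and $B$ both define \pbs of $G$ then $B=A$ or $B=E(G)\backslash A$; combined with the fact from the remark after Definition~\ref{d.d} that $E(G)\backslash A$ defines a \pb of $G$ whenever $A$ does, this gives exactly two \pbst (they are distinct whenever $E(G)\neq\emptyset$, the degenerate edgeless case being trivial).

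I would first invoke Theorem~\ref{t.pchar1}: both $G^A$ and $G^B$ are plane ribbon graphs. Since $(G^A)^{A\Delta B}=G^B$ is plane, a second application of Theorem~\ref{t.pchar1} gives that $A\Delta B$ defines a \pb of the plane ribbon graph $G^A$. Because $G^A$ is plane, Proposition~\ref{p.v1r} then upgrades this to a \jpb of $G^A$, so I can write $G^A=K_1\vee\cdots\vee K_m$ with each $K_j$ plane and $A\Delta B=\bigcup_{j\in J}E(K_j)$ for some $J\subseteq\{1,\ldots,m\}$. Iterating Lemma~\ref{sum5} across this join decomposition (by grouping $G^A=K_1\vee(K_2\vee\cdots\vee K_m)$ and inducting on $m$) yields
\[ G \;=\; (G^A)^A \;=\; K_1^{A\cap E(K_1)}\vee\cdots\vee K_m^{A\cap E(K_m)}. \]
If $m\geq 2$ this is a non-trivial join decomposition of $G$, contradicting primality. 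Hence $m=1$, forcing $A\Delta B\in\{\emptyset,E(G)\}$, and so $B\in\{A,\,E(G)\backslash A\}$, which gives the upper bound of two.

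The only delicate point is checking that each $K_j^{A\cap E(K_j)}$ inherits non-triviality from $K_j$, so that the displayed decomposition of $G$ is genuinely non-trivial whenever $m\geq 2$; this is immediate because partial duality preserves the edge set of each summand. Beyond this minor bookkeeping, every step is a direct consequence of Theorem~\ref{t.pchar1}, Proposition~\ref{p.v1r}, and Lemma~\ref{sum5}, so I do not anticipate any significant obstacle; the essential idea is just to transport the problem to the plane graph $G^A$, where \pbst become joins by Proposition~\ref{p.v1r}, and then use Lemma~\ref{sum5} to pull the join structure back to $G$ where primality kills it.
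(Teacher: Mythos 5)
Your proof is correct, but it takes a genuinely different route from the paper's. The paper's proof is local and self-contained: at each vertex it partitions the incident half-edges into blocks (two half-edges lie in one block when some path avoiding that vertex joins them), observes that in any \pb each block is monochromatic while interlacing blocks must receive opposite colours, and uses primality to show every block interlaces another; assigning a single edge to $A$ or to $E(G)\bs A$ then propagates around each vertex and, by connectedness, across all of $G$, so the whole \pb is determined by one such choice, giving exactly two. You instead argue globally through partial duality: transport to the plane ribbon graph $G^A$ via Theorem~\ref{t.pchar1}, convert the second \pb into a \jpb of $G^A$ via Proposition~\ref{p.v1r}, and pull the join structure back through Lemma~\ref{sum5}, where primality of $G$ forces $m=1$ and hence $A\Delta B\in\{\emptyset,E(G)\}$. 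Your dependencies are sound---Theorem~\ref{t.pchar1}, Proposition~\ref{p.v1r} and Lemma~\ref{sum5} are all proved before and independently of Lemma~\ref{sw2}, so there is no circularity---and the non-triviality point you flag is indeed harmless since $E\bigl(K_j^{A\cap E(K_j)}\bigr)=E(K_j)$. As for what each approach buys: the paper's block/interlacement analysis is elementary (no duality machinery) and explains structurally \emph{why} a \pb of a prime graph is rigid, whereas yours is shorter given the earlier results and is actually stronger---run without the primality hypothesis, your computation exhibits $B$ as obtained from $A$ by toggling the join-summands $K_j^{A\cap E(K_j)}$ of $G$, which is Theorem~\ref{t.pbs} itself, bypassing the prime factorization (Proposition~\ref{sw1}) and Lemma~\ref{sw2} entirely. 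One pedantic caveat, common to both proofs: when $E(G)=\emptyset$ the two defining sets $A$ and $E(G)\bs A$ coincide and $G$ admits only one (trivial) \pbt; you at least flag this degenerate case, which the paper passes over silently.
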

\begin{proof}

Suppose that $G$ admits a \pbt.  We will show that the assignment of any edge to either $A$ or $E(G)\bs A$  completely determines a \pb of $G$, and that this \pb is defined by $A$.

At each vertex $v_i$, partition the set of incident half-edges into blocks $\mathcal{A}_{i,1}, \mathcal{A}_{i,2}, \mathcal{A}_{i,3}, \ldots$ according to the following rules: place two half-edges 
 in the same block if and only if there is a path in $G$ between the two half-edges that does not pass through the vertex $v_i$.  

We will now show that the blocks at $v_i$ give rise to exactly two possible assignments of the incident edges to the sets  $A$ and $E(G)\bs A$ and that these assignments are complementary.

If there is only one block $\mathcal{A}_{i,1}$  at $v_i$ then, between  every pair of half-edges in $\mathcal{A}_{i,1}$, there is a path in $G$ that does not pass through $v_i$. It then follows that there can not be a $1$-sum occurring at $v_i$ in any \pb (as $v$ would not be  a separating vertex).  

Now suppose that the partition at $v_i$ contains more than one block and these blocks are
$\mathcal{A}_1, \cdots , \mathcal{A}_d$. Arbitrarily choose one of the cyclic orders of the half-edges incident to $v_i$.
We say that two blocks $\mathcal{A}_p$ and $\mathcal{A}_q$ {\em interlace} each other if there are half-edges $e, e'\in \mathcal{A}_p$ and $f, f'\in \mathcal{A}_q$ such that we meet the edges in the order $e,f,e',f'$ when travelling round the vertex $v_i$ with respect to the cyclic order. 

Observe that: 
\begin{itemize}
\item Every block $\mathcal{A}_p$ interlaces at least one other block $\mathcal{A}_q$.    (Otherwise $\mathcal{A}_p$ defines a join-summand and so $G$ is not prime.)
\item If $B$ is a set of blocks and $\overline{B}$ is the complementary set of blocks, then a block in $B$ interlaces a block in $\overline{B}$. (Otherwise $B$ and $\overline{B}$ define a join and so $G$ is not prime.)
\item In any \pb all of the half-edges in a block must belong to $A$ or they all belong to $E(G)\bs A$. (Since there is a path in $G$ that does not pass through $v_i$ between  every pair of half-edges in a block.)
\item In any \pb  the half-edges in interlacing  blocks must  belong to different sets $A$ or  $E(G)\bs A$. (Otherwise, since $G$ is prime, the \pb would contain a non-plane orientable $1$-summand.) 
\end{itemize}
From these observations it follows that assigning any edge incident to $v_i$ to either $A$ or to $E(G)\bs A$  determines a unique assignment of every edge that is incident to $v_i$ to either $A$ or to $E(G)\bs A$. Thus the $1$-sum at $v_i$ in the \pb is determined by the assignment of a single edge to  $A$ or $E(G)\bs A$. 

From the two cases above, and since $G$ is connected, the assignment of any edge $e$ to $A$ will  determine a unique \pbt, and the assignment of  $e$ to $E(G)\bs A$ will  determine a unique \pbt, and the result follows. 
\end{proof}

We can now prove the main result of this subsection.
\begin{proof}[Proof of Theorem~\ref{t.pbs}]
It is enough to prove the theorem for connected ribbon graphs, so assume that $G$ is connected.
Suppose that $A$ defines a \pb of $G$. 
By Proposition~\ref{sw1}, $G$ admits a unique prime factorization: $G=H_1 \vee \cdots \vee H_r$, for some $r\geq 1$.

Every \pb of $G$ is uniquely determined by choosing a \pb of $H_i$  for each $i$. Also choosing a \pb for each subgraph $H_i$ results in a \pb of $G$.
By Lemma~\ref{sw2}, each $H_i$ admits exactly two \pbs   and these are related by toggling  the edges of $H_i$  that are in $A$, and those that are not in $A $. 
Thus any  \pb can be obtained from any other one by   toggling  join-summands.
\end{proof}

We note the following corollary of Theorem~\ref{t.pbs}.
\begin{corollary}
For  $r\geq 1$, let  $G=K_1 \vee K_2 \vee \cdots \vee K_r$ be a ribbon graph. Suppose that  $A\subseteq E(G)$ defines a \pb of $G$.  Then   $A'=A\Delta E(K_i)$ also defines a \pb of $G$,  for each $i$.
\end{corollary}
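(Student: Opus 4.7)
The plan is to derive the corollary directly from Theorem~\ref{t.pchar1} together with Lemma~\ref{sum5}, rather than to argue combinatorially from the definition of a \pbt. The key is to transport everything to the partial dual, where the join structure behaves nicely and where plane-biseparations are characterized simply by planarity.

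First I would invoke Theorem~\ref{t.pchar1} to translate the hypothesis: since $A$ defines a \pb of $G$, the ribbon graph $G^A$ is plane. Next, applying Lemma~\ref{sum5} iteratively to the join decomposition $G = K_1 \vee \cdots \vee K_r$ with the edge set $A$ gives
\[
G^A \;=\; K_1^{A_1} \vee K_2^{A_2} \vee \cdots \vee K_r^{A_r},
\qquad \text{where } A_j := A \cap E(K_j).
\]
Because genus is additive under the join operation and $G^A$ is plane, every factor $K_j^{A_j}$ must itself be plane.

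Now I would compute $G^{A'}$ using Proposition~\ref{p.pd2}(3): $G^{A'} = G^{A \Delta E(K_i)} = (G^A)^{E(K_i)}$. Applying Lemma~\ref{sum5} once more to the join decomposition of $G^A$ above, and using that $E(K_i)$ meets the edge set of $K_j^{A_j}$ only when $j=i$, yields
\[
G^{A'} \;=\; K_1^{A_1} \vee \cdots \vee \bigl(K_i^{A_i}\bigr)^{E(K_i)} \vee \cdots \vee K_r^{A_r}.
\]
The modified middle factor $(K_i^{A_i})^{E(K_i)}$ is by definition the (full) geometric dual of $K_i^{A_i}$, and the geometric dual of a plane ribbon graph is plane. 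Hence every factor of the join expression for $G^{A'}$ is plane.

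Finally, since the join of plane ribbon graphs is plane (again by additivity of genus under join), $G^{A'}$ itself is plane. By Theorem~\ref{t.pchar1}, this is exactly the statement that $A' = A \Delta E(K_i)$ defines a \pb of $G$, completing the proof. The argument is essentially mechanical once one has the characterization in Theorem~\ref{t.pchar1} and the compatibility of partial duality with joins in Lemma~\ref{sum5}; there is no genuine obstacle, and the whole proof amounts to verifying that toggling a single join-summand in $A$ corresponds on the dual side to replacing one plane factor by its geometric dual.
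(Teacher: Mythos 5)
Your proof is correct, but it takes a genuinely different route from the paper. The paper offers this statement as a corollary of Theorem~\ref{t.pbs}, so its justification runs through the structural machinery of that theorem's proof: the unique prime factorization of Proposition~\ref{sw1}, and Lemma~\ref{sw2}, which shows a prime connected ribbon graph admits exactly two (complementary) \pbst; a \pb of $G$ amounts to a choice of \pb on each prime factor, and toggling $E(K_i)$ simply swaps the choice on every prime factor inside $K_i$, hence yields another \pbt. You instead transport the whole question to the partial dual: Theorem~\ref{t.pchar1} converts ``$A$ defines a \pbt'' into ``$G^A$ is plane,'' the iterated Lemma~\ref{sum5} gives $G^A = K_1^{A_1}\vee\cdots\vee K_r^{A_r}$ so that additivity of genus under joins forces each factor plane, and then $G^{A\Delta E(K_i)} = (G^A)^{E(K_i)}$ (Proposition~\ref{p.pd2}) just replaces the $i$th factor by its geometric dual $(K_i^{A_i})^*$, which is again plane. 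Each step checks out, including the use of both directions of Theorem~\ref{t.pchar1} and the identification $(K_i^{A_i})^{E(K_i)} = (K_i^{A_i})^*$ via Proposition~\ref{p.pd2}(2). What your approach buys: it bypasses the prime-factorization and uniqueness arguments entirely, and it makes visible that toggling a join-summand on the \pb side is precisely the dual-of-a-join-summand move of Definition~\ref{d.djs} on the partial-dual side (your computation is essentially the one underlying Lemma~\ref{sum6}). What the paper's route buys is more information: Theorem~\ref{t.pbs} shows toggling generates \emph{all} \pbs of $G$, of which this corollary is only the closure half.
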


\section{ Partially dual plane graphs }\label{s.join}
In this section we examine partially dual plane graphs, answering Question~\ref{q3'}.  We characterize plane partial duals 
and give a simple local move on ribbon graphs that relates them. This local move will be a key result in  the solutions to our motivating question \ref{q3}, which we answer in the next section.

The first result in this section is  the restriction of the characterization of partial duals given in Theorem~\ref{t.pchar1} to  plane graphs.  
\begin{theorem}\label{t.v1}
Let $G$ be a plane ribbon graph and $A\subseteq E(G)$. Then $G^A$ is also a plane ribbon graph if and only if $A$ defines a  \jpb of $G$.
\end{theorem}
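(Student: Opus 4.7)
The plan is to combine two results already established in the paper: Theorem~\ref{t.pchar1}, which characterizes when $G^A$ is plane in terms of a \pbt, and Proposition~\ref{p.v1r}, which shows that for a plane ribbon graph the notions of \pb and \jpb coincide. Since the theorem hypothesizes that $G$ itself is plane, both tools apply and the conclusion should follow by chaining the two equivalences.

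First I would invoke Theorem~\ref{t.pchar1} to get that $G^A$ is plane if and only if $A$ defines a \pb of $G$. This part requires no new argument: it is a special case of the general characterization, and makes no use of the extra hypothesis that $G$ is plane. Second, I would apply Proposition~\ref{p.v1r}, which is precisely the statement that, under the hypothesis that $G$ is a plane ribbon graph, $A$ defines a \pb of $G$ if and only if $A$ defines a \jpb of $G$. Chaining the two biconditionals produces the desired equivalence between the planarity of $G^A$ and $A$ defining a \jpb of $G$.

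In other words, the proof is essentially a two-line deduction:
\[
G^A \text{ is plane} \;\Longleftrightarrow\; A \text{ defines a \pb of } G \;\Longleftrightarrow\; A \text{ defines a \jpb of } G,
\]
where the first equivalence is Theorem~\ref{t.pchar1} and the second is Proposition~\ref{p.v1r}, valid because $G$ is plane. There is no genuine obstacle here since all the work has already been done in the preceding sections; the only thing worth double-checking is that the hypothesis of Proposition~\ref{p.v1r} is satisfied, which it is by assumption on $G$. Thus the theorem is really the ``plane restriction'' of Theorem~\ref{t.pchar1}, repackaged in the language of join-biseparations, and this is how I would present the proof.
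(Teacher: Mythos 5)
Your proposal is correct and is exactly the paper's own proof: Theorem~\ref{t.pchar1} gives the equivalence between planarity of $G^A$ and $A$ defining a plane-biseparation, and Proposition~\ref{p.v1r} (applicable because $G$ is plane) converts this to a plane-join-biseparation. Nothing further is needed.
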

\begin{proof}
By Theorem~\ref{t.pchar1}, $G^A$ is plane if and only if $A$ defines a  \pb of $G$. 
Since $G$ is plane, by Proposition~\ref{p.v1r}, this happens if and only if $A$ defines a   \jpb of $G$. 
\end{proof}

We will now define a move that relates all of the plane partial duals of a ribbon graph.  This move will play a key role in our applications to knot theory in Section~\ref{s.knots}. 

\begin{definition}\label{d.djs}
Let $G$ be a ribbon graph such that  $G=H_1\vee H_2$. We will say that the ribbon graph   $G^{E(H_2) } =  H_1\vee H_2^{ E(H_2)}=H_1 \vee H_2^*$ is obtained from $G$ by taking the {\em dual of  a join-summand}. The corresponding move on the set of ribbon graphs is called the  {\em dual of  a join-summand move}. Furthermore, we define an equivalence relation ``$\sim$'' on the set of ribbon graphs by setting $G\sim H$ if and only if there is a sequence of dual of  a join-summand   moves taking $G$ to $H$, or if $H=G$ or $G^*$.
\end{definition}

The following theorem shows that the dual of  a join-summand move provides a way to determine all of the plane partial duals of a plane graph. 
\begin{theorem}\label{t.sim}
Let $G$ and $H$ be plane graphs. Then $G$ and $H$ are partial duals if and only if $G\sim H$.
\end{theorem}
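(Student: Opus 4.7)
For the reverse direction, each elementary move defining $\sim$ is itself a partial duality: the move $G = H_1 \vee H_2 \to H_1 \vee H_2^*$ is exactly $G^{E(H_2)}$, and $G \sim G^*$ is $G^{E(G)}$. By Proposition~\ref{p.pd2}, compositions of partial duals are again partial duals (via symmetric differences), so $G \sim H$ implies $H = G^A$ for some $A\subseteq E(G)$.

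For the forward direction, I would proceed by induction on the length of a \jpbt. Assume without loss of generality that $G$ is connected. By Theorem~\ref{t.v1}, since both $G$ and $G^A = H$ are plane, $A$ defines a \jpb $G = H_1 \vee \cdots \vee H_l$ with each $H_i$ plane and $A = \bigcup_{i \in I} E(H_i)$ for some $I \subseteq \{1, \dots, l\}$. The base case $l = 1$ is immediate, since $G^A \in \{G, G^*\}$ and $G \sim G^*$ by definition.

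For $l \geq 2$, write $G = G' \vee H_l$ with $G' = H_1 \vee \cdots \vee H_{l-1}$, joined to $H_l$ at some vertex $v_l$. Setting $A' = A \cap E(G')$ and $A'' = A \cap E(H_l)$, Lemma~\ref{sum5} gives $G^A = (G')^{A'} \vee H_l^{A''}$. By induction, $G' \sim (G')^{A'}$ via a sequence of dual-of-join-summand moves (and possibly a full dual). I plan to translate each such move on $G'$ into a short sequence of moves on $G$: given a decomposition $G' = K_1 \vee K_2$, I reparenthesize $G$ as either $(K_1 \vee H_l) \vee K_2$ or $K_1 \vee (K_2 \vee H_l)$, depending on which side of $G'$ contains $v_l$. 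In the former case, a single dual-of-join-summand move dualizing $K_2$ gives $(K_1 \vee K_2^*) \vee H_l$ directly; in the latter, one move dualizes $(K_2 \vee H_l)$ to yield $K_1 \vee K_2^* \vee H_l^*$, and a second move dualizing $H_l^*$ restores it. A global dual on $G'$ translates to $G \sim G^* = (G')^* \vee H_l^*$ followed by dualizing $H_l^*$. After translating the entire sequence, $G \sim (G')^{A'} \vee H_l$; one last move dualizing $H_l$ (used precisely when $l \in I$) yields $G^A$.

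The main obstacle I anticipate is carefully justifying the reparenthesization of the join expressions, especially in the borderline case where $v_l$ coincides with the shared join-vertex of $K_1 \vee K_2$ and $G$ is really a three-way join at a single vertex. Here one must use the definition of join as a property about how the boundary of the join-vertex decomposes into arcs, and verify that the arc on the shared vertex can be partitioned to accommodate any of the three pairings of the summands as a two-fold join.
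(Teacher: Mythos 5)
Your argument is correct, but it is organized genuinely differently from the paper's. Both proofs share the same skeleton: sufficiency because each move is itself a partial dual (Definition~\ref{d.djs} plus Proposition~\ref{p.pd2}\,(\ref{p.pd2.3})), and necessity via Theorem~\ref{t.v1} to extract a \jpb $G=H_1\vee\cdots\vee H_l$ and Lemma~\ref{sum5} to distribute the partial dual over joins. But the paper does not induct on $l$. Instead, setting $A_i=A\cap E(H_i)$ and $B_i=\bigcup_{j\geq i}A_j$, it writes $G^A$ as a telescoping composition of ``tail'' duals $(\cdots(G^{B_{\iota_1}})^{B_{\iota_2}}\cdots)^{B_{\iota_r}}$, where the $\iota_j$ are the positions at which the sequence $(A_1,\ldots,A_l)$ toggles between empty and non-empty, and handles each tail dual with Lemma~\ref{sum6}. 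The proof of that lemma regroups the join expression as $H_1\vee\cdots\vee H_{l-1}\vee K_1\vee\cdots\vee K_p$, where the $K_i$ are the non-trivial connected components of $G-E(H_1\vee\cdots\vee H_{l-1})$, so that each dualized piece is a whole component hanging off the rest and hence a genuine join-summand dualizable by a single move. This component regrouping is exactly what spares the paper the reparenthesization bookkeeping your lifting argument requires: it never has to track where the join vertex $v_l$ sits relative to the decomposition being dualized. Your induction-plus-lifting is a viable replacement for Lemma~\ref{sum6}, at the cost of the arc analysis you identify; the paper's route is shorter precisely because it avoids that analysis.

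One caution about your closing remark: the claim that in the three-way join at a single shared vertex the boundary arcs ``accommodate any of the three pairings'' is false in general. The arc of $H_l$-attachments need only avoid the edge-endpoints of $G'$, so it may sit in a gap \emph{interior} to the span of $K_1$'s arc, giving the cyclic pattern (part of $K_1$, $H_l$, rest of $K_1$, $K_2$) on the boundary of the shared vertex; then $K_1\vee(K_2\vee H_l)$ is not a valid join decomposition, although $(K_1\vee H_l)\vee K_2$ is. What is true, and all your argument needs, is that at least one of your two pairings is always realizable: the $H_l$-block lies inside the span of $K_1$'s arc, inside the span of $K_2$'s arc, or between the two blocks, and in each case the pairing that keeps the remaining summand's attachments consecutive survives. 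So the correct case split is by the position of the $H_l$-arc in the cyclic order, not by ``which side of $G'$ contains $v_l$,'' which is ambiguous exactly in the borderline case. A similar (easy) check, via the naturality clause of Lemma~\ref{sum5}, is needed when you re-isolate $H_l^*$ as a join-summand of $K_1\vee K_2^*\vee H_l^*$ after your two-move trick.
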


To prove the theorem, we need  the following lemma.
\begin{lemma}\label{sum6}
Let $G$ be a ribbon graph, $G=H_1\vee \cdots \vee H_k$, and  $A=\bigcup_{i=l}^{k}  E(H_i)$. Then $G\sim G^A$. 
\end{lemma}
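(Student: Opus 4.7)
The plan is to combine Lemma~\ref{sum5} (which says that partial duality distributes over joins) with the definition of the dual of a join-summand move, applied iteratively across the join-summands $H_l, H_{l+1}, \ldots, H_k$.

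First I would compute $G^A$ explicitly as a join by invoking Lemma~\ref{sum5}. Since $A \cap E(H_i) = \emptyset$ for $i < l$ and $A \cap E(H_i) = E(H_i)$ for $i \geq l$, the lemma gives
\[
G^A = H_1 \vee \cdots \vee H_{l-1} \vee H_l^* \vee H_{l+1}^* \vee \cdots \vee H_k^*,
\]
where $H_i^* = H_i^{E(H_i)}$. So the target ribbon graph differs from $G$ exactly by dualizing each of the final $k - l + 1$ join-summands.

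Next I would build a chain of $\sim$-equivalences connecting $G$ to $G^A$ one summand at a time. Define, for $j = 0, 1, \ldots, k-l+1$, the intermediate ribbon graphs
\[
G_j := H_1 \vee \cdots \vee H_{k-j} \vee H_{k-j+1}^* \vee \cdots \vee H_k^*,
\]
so $G_0 = G$ and $G_{k-l+1} = G^A$. To pass from $G_j$ to $G_{j+1}$, I would regroup the join decomposition as $G_j = P_j \vee H_{k-j}$, where $P_j$ denotes the join of the remaining summands. Applying a single dual of a join-summand move to the $H_{k-j}$ factor replaces it by $H_{k-j}^*$, yielding $P_j \vee H_{k-j}^* = G_{j+1}$. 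Hence $G_j \sim G_{j+1}$ for each $j$, and by transitivity of $\sim$, we conclude $G = G_0 \sim G_{k-l+1} = G^A$.

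The only point that requires mild care is the regrouping step: we are applying the dual of a join-summand move to a factor that is not necessarily written last in the join sequence. This is justified by the fact that the join decomposition of a ribbon graph can be reordered (the prime decomposition in Proposition~\ref{sw1} is unique only up to the order of join-summands), so any of the $H_i$'s may be singled out as the ``second'' factor in a decomposition $G_j = P_j \vee H_i$. I do not expect any genuine obstacle; the proof is essentially a bookkeeping argument built on Lemma~\ref{sum5}.
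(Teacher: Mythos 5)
There is a genuine gap at your regrouping step. The dual of a join-summand move (Definition~\ref{d.djs}) requires expressing the ribbon graph as a join of \emph{two} factors, $G_j=P_j\vee H_{k-j}$, and such a decomposition need not exist: the uniqueness of the prime factorization in Proposition~\ref{sw1} is only up to the order of summands \emph{in a valid sequence}, and not every summand can be moved to the end of the sequence (the paper makes exactly this point for $1$-sums: ``only some reorderings of the $1$-summands in a sequence of $1$-sums are possible''). Concretely, take $G=H_1\vee H_2\vee H_3$ where $H_2$ is joined to $H_1$ at a vertex $u$ and $H_3$ is joined to $H_2$ at a different vertex $w$ of $H_2$, with $l=2$, so $A=E(H_2)\cup E(H_3)$. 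Your chain begins correctly by dualizing $H_3$, producing $G_1=H_1\vee H_2\vee H_3^*$. But now $H_2$ meets the union of the remaining summands in \emph{two} vertices (the join vertex with $H_1$ and the join vertex with $H_3^*$), so there is no decomposition $G_1=P_1\vee H_2$, and the move cannot be applied to $H_2$ alone. Your ``mild care'' remark is precisely where the argument breaks.

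The paper's proof circumvents this by grouping rather than dualizing summand-by-summand: it takes $K_1,\ldots,K_p$ to be the non-trivial connected components of $G-E(H_1\vee\cdots\vee H_{l-1})$, rewrites $G=H_1\vee\cdots\vee H_{l-1}\vee K_1\vee\cdots\vee K_p$ where, crucially, no joins involve two vertices of the $K_i$'s (each $K_i$ hangs off the core at a single vertex, and distinct $K_i$'s are vertex-disjoint), and then applies Lemma~\ref{sum5} to get $G^A=H_1\vee\cdots\vee H_{l-1}\vee K_1^*\vee\cdots\vee K_p^*$. Each dualization $K_i\mapsto K_i^*$ is then a legitimate single move, because each $K_i$ genuinely is a join-factor of the whole graph. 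In the counterexample above, this means performing one move on the component $K_1=H_2\vee H_3$ as a block, rather than two moves on $H_2$ and $H_3$ separately. Your computation of $G^A$ via Lemma~\ref{sum5} is correct as far as it goes, but to repair the proof you should replace your intermediate graphs $G_j$ by the component-wise chain, dualizing one $K_i$ at a time.
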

\begin{proof}
Let  $K_1, \ldots , K_p$ be the non-trivial connected components of $G-E\left(H_1\vee \cdots \vee H_{l-1} \right) $. Then $G$ can  be written as  
$ H_1\vee \cdots \vee H_{l-1} \vee K_1 \vee \cdots \vee K_p$, 
where no joins involve two vertices of the  $K_i$'s.   It follows that
\[G^A =  \left( H_1\vee \cdots \vee H_{l-1} \vee K_1 \vee \cdots \vee K_p  \right)^A = 
H_1\vee \cdots \vee H_{l-1} \vee K_1^* \vee \cdots \vee K_p^* , \]
where the second equality follows by Lemma~\ref{sum5}.
From this identity it follows that $G^{A} $ can be obtained by dual of  a join-summand moves which act on $K_1, \ldots , K_p$, and so $G\sim G^A$.
\end{proof}

\begin{proof}[Proof of Theorem~\ref{t.sim}.]
Sufficiency is obvious. To prove necessity, assume that $H=G^A$. Since both $G$ and $G^A$ are plane, by Theorem~\ref{t.v1}, $A$ defines a  \jpb of $G$. If $A=\emptyset$ or $E(G)$ the result is trivial, so assume that this is not the case. 
We then have that $G= H_1\vee \cdots \vee H_l $, for $l\geq 2$. 

Let $A_i  =   A\cap E(H_i) $.   In the sequence $(A_1, \ldots , A_l)$, suppose the first  non-empty set   occurs at position $\iota_1$  and, as we read along the sequence, toggles between being non-empty and empty  at positions $\iota_2$,  \ldots  ,$\iota_r$ in this order.
Letting $B_i=\cup_{j=i}^l A_j$, then, using  Proposition~\ref{p.pd2}, we can write 
\[G^A= (H_1\vee \cdots \vee H_l )^A =  ( \cdots ((H_1\vee \cdots \vee H_l )^{B_{\iota_1}})^{B_{\iota_2}} \,\cdots  )^{B_{\iota_r}},
\]
and it follows from Lemma~\ref{sum6} that $G\sim G^A$.
\end{proof}

\section{Relating the ribbon graphs of a link diagram}\label{s.knots}
In this final section we apply the graph theoretical results of Sections~\ref{s.char} and~\ref{s.join} to answer Questions~\ref{q2} and~\ref{q3}.

\subsection{Link diagrams with the same graph   }

Although there is a unique link diagram associated with a Tait graph, in general, a ribbon graph can have many plane partial duals, and therefore $ \calD(G)$ can contain more than one distinct link diagram, i.e. many distinct link diagrams can give rise to the same set of ribbon graphs.
In this subsection we determine how these link diagrams are related to each other. 
We will see that the summand flip move, defined below, relates all link diagrams that  are presented by the same ribbon graphs.

Let $D_1\subset S^2$ and $D_2\subset S^2$ be link diagrams. The {\em connected sum} $D_1\# D_2\subset S^2$ of   $D_1$ and $D_2$ is the link diagram formed by, for $i=1,2$, choosing a disc $\mathfrak{D}_i$ on $S^2$ that intersects $D_i$ in an arc $\alpha_i$, deleting the interior of each $\mathfrak{D}_i$, and identifying the boundaries of $S^2\bs  \mathfrak{D}_1$ and   $S^2\bs  \mathfrak{D}_2$ in such a way that each end point of $\alpha_1$ is identified with a distinct endpoint of $\alpha_2$. This process is illustrated in the following figure.

\begin{center}
\begin{tabular}{ccc}
\includegraphics[height=1.5cm]{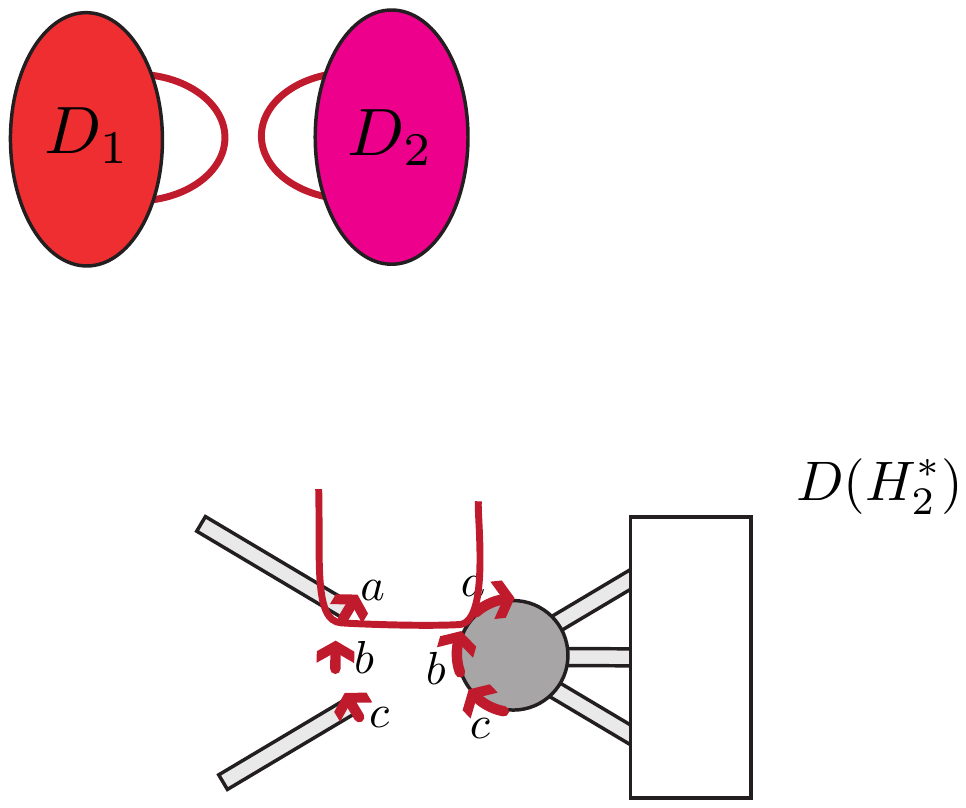} & \raisebox{7mm}{\includegraphics[width=1cm]{arrow} }  & \includegraphics[height=1.5cm]{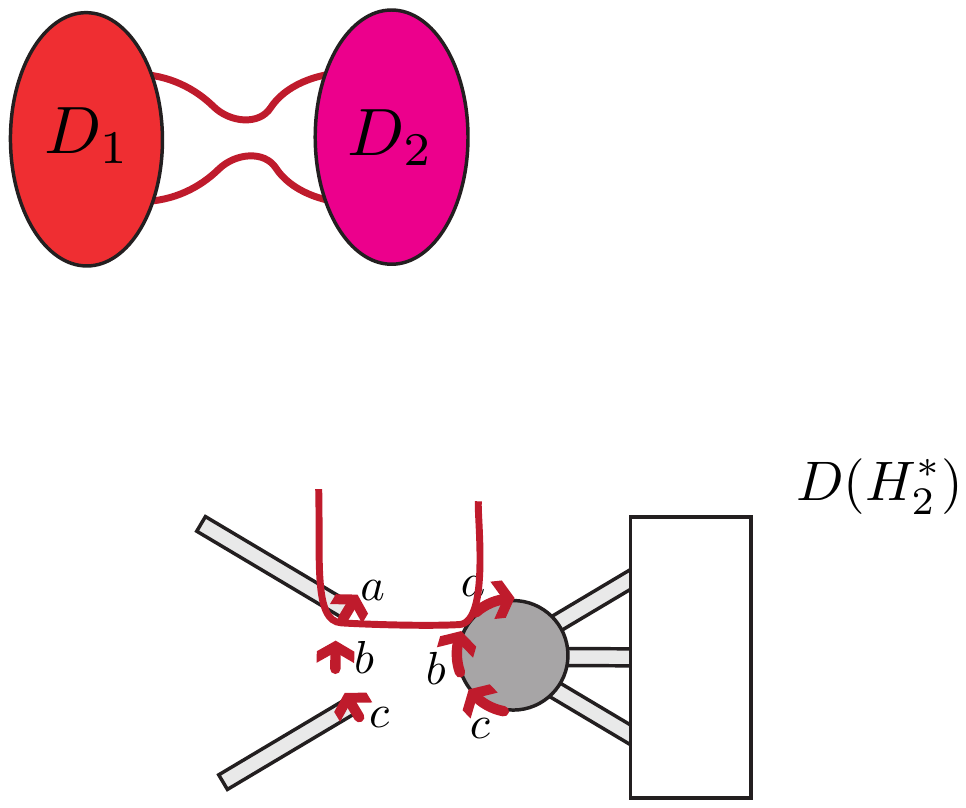} \\
 $D_1$ and $D_2$ && $D_1\# D_2$
\end{tabular}
\end{center}

For reference later, we record the following two  well-known (and obvious) results.
\begin{lemma}\label{sums}
Let $G$ be a plane graph such that $G=H_1\vee H_2$, then $D(H_1\vee H_2)=  D(H_1)\# D(H_2)$. Moreover, there is a natural correspondence between the  arc on $H_1\vee H_2$ that defines the join,  and the arcs on $D(H_1)$ and $D(H_2)$ used to form the connected sum.
\end{lemma}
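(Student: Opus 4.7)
The plan is to work directly from the construction of $D(G)$ given in Subsection~\ref{ss.med}. Suppose $G=H_1\vee H_2$ with the join occurring at a vertex $v$, and let $p,q$ be the two arcs on the boundary of $v$ that partition the edge-ends of $G$ so that every half-edge of $H_1$ meets $p$ and every half-edge of $H_2$ meets $q$. Let $a,b$ be the two points on $\partial v$ where $p$ and $q$ meet. I would first observe that, in the construction of $D(G)$, each crossing lies in (a small neighbourhood of) an edge of $G$, and the arcs connecting the crossings run along the boundaries of the vertices. Consequently, all crossings and connecting arcs coming from $H_1$ sit in a neighbourhood of $H_1$ that meets $\partial v$ only along $p$, and similarly for $H_2$ along $q$. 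At the vertex $v$ itself the diagram consists merely of two arcs: one that runs along $p$ (joining consecutive ends of $H_1$-crossings) and one that runs along $q$; these two arcs pass through small neighbourhoods of $a$ and $b$.

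Next I would cap off the two punctures of $G$ to get the sphere $S^2$, and interpret the join structure there: the arc $p$ together with a diameter of the two faces incident to $a$ and $b$ sweeps out a disc $\mathfrak{D}_1 \subset S^2$ containing exactly the part of the diagram coming from $H_1$; symmetrically $\mathfrak{D}_2$ contains the part coming from $H_2$. Up to ambient isotopy, $\mathfrak{D}_1$ intersects $D(G)$ in a single arc $\alpha_1$ (the two endpoints being the two intersections of $D(G)$ with $\partial \mathfrak{D}_1$ near $a$ and $b$), and similarly for $\alpha_2$. Comparing this with the construction of $D(H_1)$ and $D(H_2)$ applied to each join-summand separately (where capping off the extra disc left by removing $H_2$ produces exactly the arc $\alpha_1$ in $D(H_1)$, and symmetrically), I would identify $D(G)$ with the sphere obtained by gluing the discs $S^2 \setminus \mathfrak{D}_i$ together so that the endpoints of $\alpha_1$ match the endpoints of $\alpha_2$. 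This is exactly the definition of $D(H_1) \# D(H_2)$, proving the first statement.

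For the second statement, the natural correspondence is already visible in the argument above: the join arc of $H_1 \vee H_2$ (either $p$ or $q$, or equivalently the pair of points $\{a,b\}$) corresponds under the construction to the boundary of the disc $\mathfrak{D}_i$ used to perform the connected sum, and the two points $a,b$ on the boundary of $v$ correspond to the two endpoints of each of $\alpha_1$ and $\alpha_2$.

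The proof is essentially a bookkeeping exercise tracking how the link-diagram construction interacts with the decomposition of the vertex $v$. The only slightly delicate point is verifying that inserting the standard crossing pattern on each edge and then following vertex boundaries really does leave, in a neighbourhood of $v$, just the two non-crossing arcs along $p$ and $q$ (rather than producing additional crossings or linked pieces at $v$), so that the cut along a small arc from $a$ to $b$ genuinely separates $D(G)$ into the two summand diagrams. This is however immediate from the definition of $D$, since no edge of $G$ is incident to both $p$ and $q$, so no local configuration of the construction straddles the cut.
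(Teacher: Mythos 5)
The paper records Lemma~\ref{sums} \emph{without proof}: it is introduced with the words ``we record the following two well-known (and obvious) results,'' so there is no argument in the paper to compare yours against. Judged on its own merits, your proof is correct, and it is precisely the verification that the paper leaves implicit: crossings of $D(G)$ are local to the edges, the diagram meets the vertex $v$ only in boundary-parallel arcs, and since no edge attachment straddles the points $a$ and $b$, a chord across $v$ from $a$ to $b$, closed up through the complementary region, gives a circle meeting $D(G)$ in exactly two points and separating the $H_1$-tangle from the $H_2$-tangle; capping each side reproduces $D(H_1)$ and $D(H_2)$, with the trivial arc of each appearing along the portion of $\partial v$ vacated by the other summand. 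Three slips in your write-up deserve cleaning, though none is a genuine gap. First, ``cap off the two punctures'' should read ``cap off all the punctures'': a connected plane ribbon graph has $|E|-|V|+2$ boundary components, not two. Second, your discs $\mathfrak{D}_i$ play the role opposite to the paper's definition of connected sum: as you define it, $\mathfrak{D}_1$ contains the entire $H_1$-tangle, so it does \emph{not} meet the diagram in a single embedded arc (your sentence ``$\mathfrak{D}_1$ intersects $D(G)$ in a single arc $\alpha_1$'' is false as written); the trivial arc $\alpha_1$ lives in the cap disc of $D(H_1)$ — the region vacated by $H_2$ — which is the disc that must be deleted in the paper's definition of $D(H_1)\# D(H_2)$. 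Your parenthetical about ``capping off the extra disc left by removing $H_2$'' shows you understand this, so it is only the labelling that needs repair. Third, ``a diameter of the two faces incident to $a$ and $b$'' tacitly assumes the separating circle can be closed up through faces, and this deserves a sentence of justification: the faces incident to $a$ and to $b$ are in fact the \emph{same} face, because the boundary walk of the ribbon graph leaving $a$ into the $H_2$-side must return to $\partial v$ through $b$ (the join arc $p$ carries no $H_2$-attachments), and this is exactly what guarantees the closing arc exists without crossing any edge; alternatively one can take the boundary of a regular neighbourhood of the trimmed subsurface carrying $H_1$. With these adjustments your argument is a complete and correct proof of both assertions of the lemma, including the stated correspondence between the join arc and the arcs $\alpha_1,\alpha_2$ used to form the connected sum.
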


\begin{lemma}\label{l.diaofdual}
Let $G$  be a signed plane ribbon graph. Then 
$D(G) = D(G^* )$. 
\end{lemma}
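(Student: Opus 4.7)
The plan is to verify that $D(G)$ and $D(G^*)$ coincide literally as subsets of $S^2$, by checking a local symmetry at each edge. Recall from Subsection~\ref{ss.med} that $D(G)$ is built edge-by-edge: inside each edge-disc one draws a small crossing configuration of one of the two types (determined by the sign of the edge), and then the resulting arcs are joined by following the boundaries of the vertex-discs. The whole proof amounts to checking that this construction is deliberately symmetric under geometric duality.

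First I would fix the geometric picture. Since $G$ is plane, both $G$ and $G^*$ sit inside the same copy of $S^2$, with the vertex-discs of $G^*$ occupying the face-discs of $G$. The edge-disc of $e^*$ occupies the same region of $S^2$ as the edge-disc of $e$, but with a 90-degree rotation of its labelled boundary: the two arcs where $e^*$ meets its incident vertex-discs sit on the two arcs where $e$ used to meet its incident face-discs, and vice versa. At the same time, the definition of duality on a signed ribbon graph flips the sign of every edge, so the sign of $e^*$ is the opposite of the sign of $e$.

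The key step is then a direct inspection of the two local pictures in Subsection~\ref{ss.med}: rotating the positive crossing configuration by 90 degrees produces exactly the negative configuration (and vice versa). Consequently the geometric 90-degree rotation (coming from passing to the dual edge) is precisely cancelled by the combinatorial sign flip (coming from the definition of the dual of a signed ribbon graph). The crossing drawn on $e^*$ for $D(G^*)$ is therefore literally the same pair of transverse arcs inside the same edge-disc of $S^2$ as the crossing drawn on $e$ for $D(G)$, with the same over/under information.

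Finally, to pass from local agreement to global agreement, I would note that in both $D(G)$ and $D(G^*)$ the local crossings are joined by following boundaries of discs (vertex-discs in one case, face-discs in the other) that partition the complement of the edges in $S^2$. Since the vertex-discs of $G^*$ are exactly the face-discs of $G$, these joining patterns coincide, and the two diagrams agree globally. The only real obstacle is checking the 90-degree-plus-sign-flip symmetry of the two crossing pictures, which is immediate by inspection of the configurations displayed in the definition of $D(G)$.
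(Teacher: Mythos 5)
Your proof is correct. Note that the paper itself supplies no argument here: Lemma~\ref{l.diaofdual} is simply recorded as ``well-known (and obvious),'' so your write-up fills a gap rather than deviating from a given proof. The argument you give is the standard justification, and it is sound: the Tait-sign convention is designed exactly so that the $90$-degree rotation relating the edge-disc of $e$ to that of $e^*$ is cancelled by the sign flip built into the definition of the dual of a signed ribbon graph, and your global step is equivalent to the classical fact that $G$ and $G^*$ have the same embedded medial graph, since the vertex-discs of $G^*$ are the face-discs of $G$ and the connecting arcs between consecutive edge-ends run along segments shared by a vertex-disc boundary of $G$ and a vertex-disc boundary of $G^*$. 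Two minor points to keep in mind: the equality $D(G)=D(G^*)$ is as diagrams up to isotopy of $S^2$ (the paper's stated convention), since ``following the boundaries of the vertices'' involves choices within a neighbourhood; and for a disconnected plane ribbon graph the construction caps off each component separately, so one should invoke the fact that duality acts disjointly on components and apply your argument componentwise.
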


\begin{definition}\label{d.lflip}
Let $D_1$ and $D_2$ be link diagrams. Then we say that $D_1$ and $D_2$ are related by a  {\em summand flip} if and only if $D_2$ can be obtained from $D_1$ by the following process: orient $S^2$ and choose an disc $\mathfrak{D}$ in $S^2$ whose boundary intersects $D_1$ transversally in exactly two points $a$ and $b$. Cut out   $\mathfrak{D}$ and glue it back in  such a way that the orientations of $\mathfrak{D}$ and $S^2\bs \mathfrak{D}$ disagree and the points $a$ on the boundaries of $\mathfrak{D}$ and $S^2\bs  \mathfrak{D}$ are identified, and  the points $b$ on the boundaries of $\mathfrak{D}$ and $S^2\bs \mathfrak{D}$ are identified. See Figure~\ref{f.sf}.

We will say that two  link diagrams $D_1$ and $D_2$   are {\em related by summand-flips}, written $D_1\sim D_2$, if and only if there is a sequence of summand-flips taking $D_1$ to $D_2$.
\end{definition}

\begin{figure}
\begin{center}
\begin{tabular}{ccccc}
\includegraphics[height=2cm]{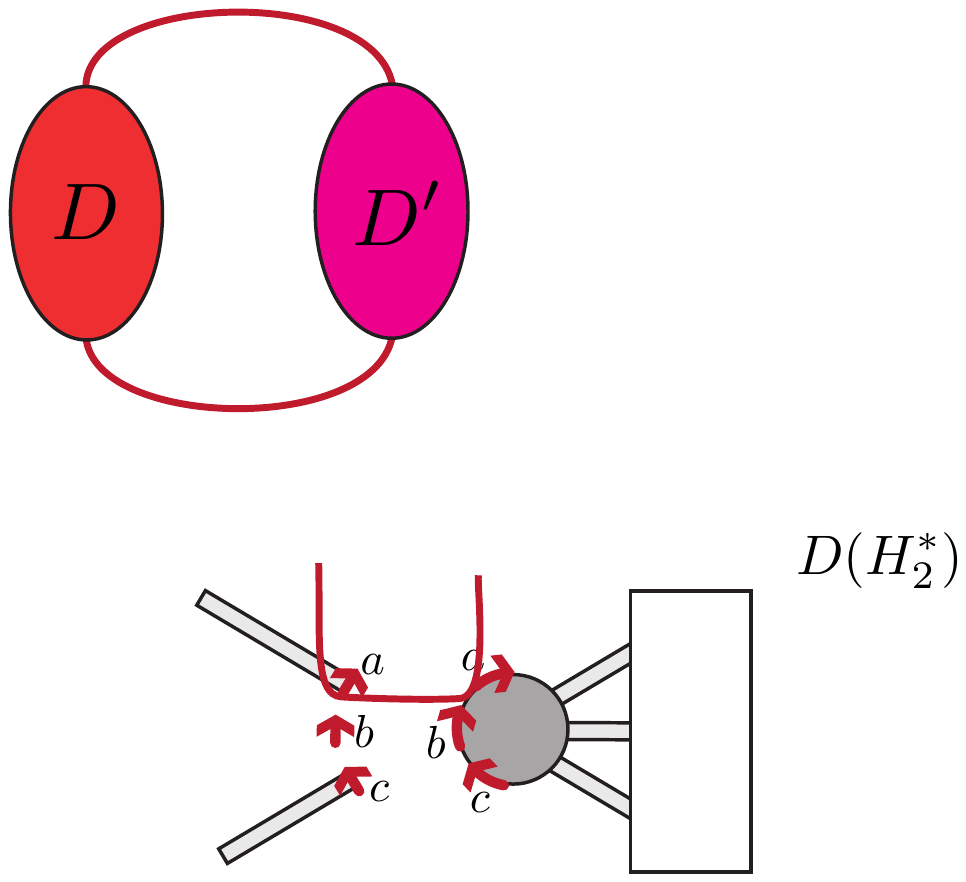} & \raisebox{7mm}{\includegraphics[width=1cm]{arrow} }  & \includegraphics[height=2cm]{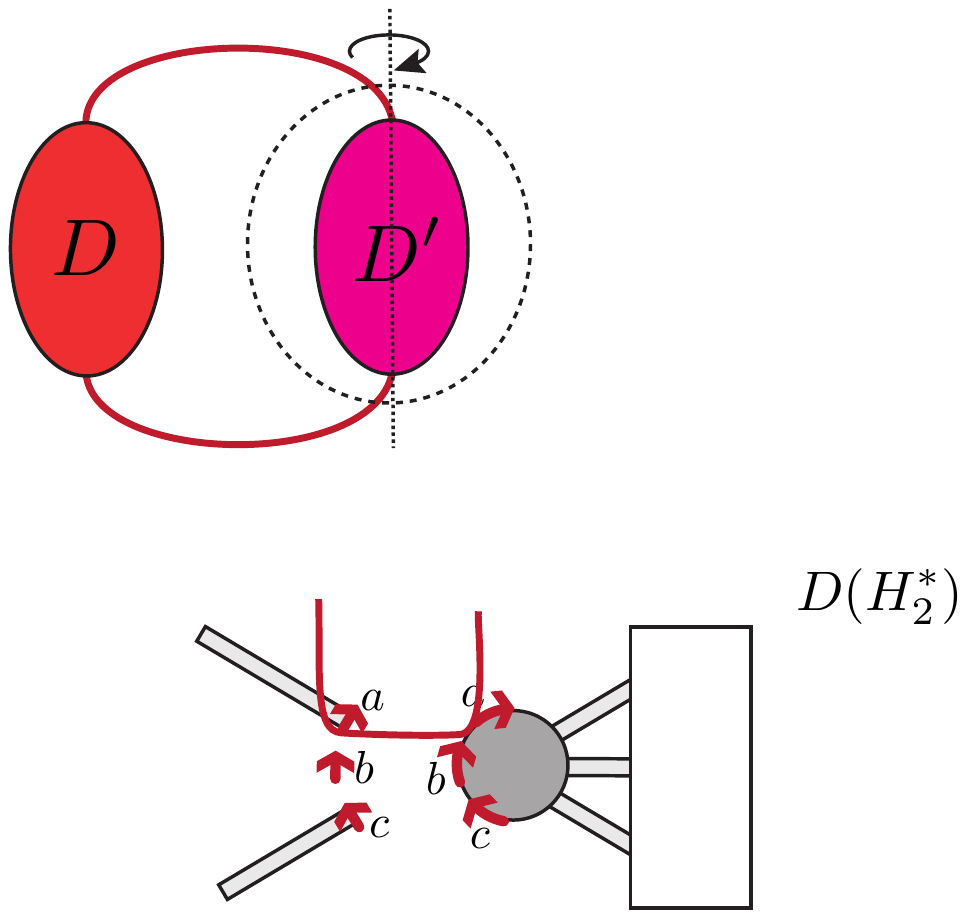}& \raisebox{7mm}{\includegraphics[width=1cm]{arrow} }  & \includegraphics[height=2cm]{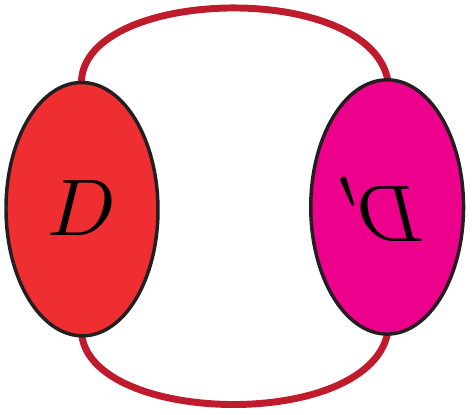} \\
 $D_1 = D\#D'$  &&  cut, flip and glue   && $ D_2$
\end{tabular}
\end{center}
\caption{A summand-flip.}
\label{f.sf}
\end{figure}

The following theorem is the main result of this section. It describes how link diagrams arising from the same ribbon graph are related. 
\begin{theorem}\label{t.diagrams}
Let $G$ be a signed ribbon graph and $D\in \mathcal{D}(G)$. Then $D'\in \mathcal{D}(G)$ if and only if $D\sim D'$.
\end{theorem}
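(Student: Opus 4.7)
My plan is to translate the equivalence relation $\sim$ on plane ribbon graphs (Theorem~\ref{t.sim}) into the equivalence relation $\sim$ on link diagrams (Definition~\ref{d.lflip}). First I would unpack the definitions: by \eqref{e.cald}, $D, D' \in \mathcal{D}(G)$ means $D = D(H)$ and $D' = D(H')$, where $H = G^A$ and $H' = G^B$ are plane ribbon graphs; and by Proposition~\ref{p.pd2}(3) these plane ribbon graphs are partial duals of each other via $H' = H^{A\Delta B}$. Applying Theorem~\ref{t.sim}, we get $H \sim H'$ in the sense of Definition~\ref{d.djs}, so $H$ and $H'$ are connected by a finite sequence of dual-of-a-join-summand moves, possibly combined with a single geometric dual step. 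By Lemma~\ref{l.diaofdual}, a full geometric dual does not change the associated link diagram, so it suffices to establish the following local claim.

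Local claim: if $H$ and $H'$ are plane ribbon graphs that differ by a single dual-of-a-join-summand move, say $H = K_1 \vee K_2$ and $H' = K_1 \vee K_2^*$, then $D(H)$ and $D(H')$ are related by a single summand-flip. To prove this, I would apply Lemma~\ref{sums} to write $D(H) = D(K_1) \# D(K_2)$ using the connecting arc $\alpha$ corresponding to the join, so that $D(K_2)$ lives inside some disc $\mathfrak{D} \subset S^2$ whose boundary meets the rest of $D(H)$ at the two endpoints of $\alpha$. Now consider $H' = K_1 \vee K_2^*$: embedding $K_2^*$ in the same disc $\mathfrak{D}$ via the usual ``vertex in each face'' construction inside $\mathfrak{D}$, the join vertex of $K_1 \vee K_2^*$ is the dual vertex corresponding to the outer face of $K_2 \subset \mathfrak{D}$. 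By Lemma~\ref{l.diaofdual}, $D(K_2^*) = D(K_2)$ as abstract link diagrams, but in the connected sum for $H'$ the disc $\mathfrak{D}$ is glued back to $S^2 \setminus \mathfrak{D}$ with its orientation reversed relative to the one coming from $K_2$; this is because geometric duality of a plane ribbon graph in $\mathfrak{D}$ interchanges vertex discs with face discs and, at the boundary arc $\alpha$, reflects the local picture across $\alpha$. The result is precisely $D(H)$ with the disc $\mathfrak{D}$ cut out, flipped and reglued—i.e., a summand-flip.

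For the converse direction (the ``if'' implication), suppose $D \in \mathcal{D}(G)$ and $D'$ is obtained from $D$ by a single summand-flip using a disc $\mathfrak{D}$. Take any Tait graph $T$ of $D$ with $T = G^A$ plane. The two crossings of $D$ on $\partial \mathfrak{D}$ correspond to two edges of $T$ incident to a common vertex that serves as a join vertex, so $T = K_1 \vee K_2$ with $K_2$ the part of $T$ inside $\mathfrak{D}$. Setting $T' := K_1 \vee K_2^*$, the local claim above (read in reverse) gives $D(T') = D'$. Since $T'$ is a partial dual of $T$, and $T$ is a partial dual of $G$, Proposition~\ref{p.pd2}(3) gives $T' = G^{B}$ for some $B \subseteq E(G)$, and thus $D' = D(G^B) \in \mathcal{D}(G)$.

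The main obstacle will be rigorously justifying the orientation-reversal step in the local claim. The construction of $D(G)$ from a plane ribbon graph is local at each edge—using the sign and the ambient orientation to choose the over-strand—so what requires care is checking that the sign-flipping rule of Definition~\ref{d.pd} applied to $K_2 \to K_2^*$, combined with the face-for-vertex swap, produces exactly the diagram $D(K_2)$ embedded in $\mathfrak{D}$ with reversed orientation. I would handle this by working in a fixed planar model: embed $K_2$ in $\mathfrak{D}$ with its join arc on $\partial\mathfrak{D}$, draw $D(K_2)$ explicitly, then draw $K_2^*$ (with its sign-flipped edges) in the same disc and verify edge-by-edge that the resulting link diagram inside $\mathfrak{D}$ is the reflection of $D(K_2)$ across $\alpha$, which is the content of the summand-flip.
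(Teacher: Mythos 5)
Your proposal is correct and takes essentially the same route as the paper: the reduction via Equation~\eqref{e.cald} and Theorem~\ref{t.sim} is exactly the paper's proof of Theorem~\ref{t.diagrams}, and your ``local claim'' is precisely the paper's Lemma~\ref{t.sg1}, which the paper proves by the same detach--dualize--reattach argument in $S^2$, using Lemma~\ref{sums}, Lemma~\ref{l.diaofdual} and uniqueness of plane embeddings to realize the orientation reversal as a single summand-flip. One harmless slip in your converse: $\partial\mathfrak{D}$ meets $D$ transversally in two points lying on arcs of the diagram (not at crossings), and it is these two points that exhibit a black face as a separating vertex of the Tait graph, yielding the join decomposition $T=K_1\vee K_2$ --- the reverse reading of Lemma~\ref{sums} --- exactly as you intend.
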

The proof of this result appears at the end of this subsection. It will follow from Lemma~\ref{t.sg1} below. Before giving the proof of Theorem~\ref{t.diagrams}, we note  a straight-forward, but  important, corollary. 
\begin{corollary}
Let $D,D'\in \mathcal{D}(G)$. Then the links corresponding to $D$ and $D'$ are isotopic.  
\end{corollary}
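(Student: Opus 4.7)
The plan is to invoke Theorem~\ref{t.diagrams} to reduce to the case of a single summand-flip, and then to realize such a move as an ambient isotopy of $S^3$. By Theorem~\ref{t.diagrams}, $D$ and $D'$ are related by a finite sequence of summand-flips, so induction on the length of the sequence reduces the corollary to showing that a single summand-flip $D_1 \to D_2$ preserves the link isotopy class.

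Suppose then that $D_2$ is obtained from $D_1$ by a summand-flip on a disc $\mathfrak{D} \subset S^2$ whose boundary meets $D_1$ transversally in two points $a, b$. View $S^2 \subset S^3$ as the projection sphere and thicken $\mathfrak{D}$ to a $3$-ball $B \subset S^3$ whose boundary sphere meets the link $L$ represented by $D_1$ transversally in exactly $\{a, b\}$. Then $L \cap B$ is a $2$-strand tangle whose projection recovers $D_1 \cap \mathfrak{D}$. Let $\rho$ be the $180^\circ$ rotation of $S^3$ about the axis through $a$ and $b$; since rotation is orientation-preserving, $\rho$ is an ambient isotopy, and by a standard bump-function damping argument it may be realized as an ambient isotopy of $S^3$ that acts as $\rho$ on $B$ and as the identity far from $B$. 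This damped isotopy fixes $L$ outside $B$, because $L$ meets $\partial B$ only at $a, b$, and these lie on the rotation axis and are thus fixed throughout.

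To conclude, I would identify the resulting link $\rho(L)$ with the link $L'$ represented by $D_2$. Choosing coordinates so that $\mathfrak{D}$ lies in the $xy$-plane with the line $ab$ coinciding with the $x$-axis, $\rho$ acts by $(x,y,z)\mapsto (x,-y,-z)$; hence the projection of $\rho(L)$ to $\mathfrak{D}$ is the reflection of $D_1 \cap \mathfrak{D}$ through $ab$, with the sign of $z$ negated so that over and under are swapped at every crossing inside $\mathfrak{D}$. This matches precisely the combinatorial effect of the summand-flip of Definition~\ref{d.lflip}: regluing $\mathfrak{D}$ with reversed orientation reflects the sub-diagram through $ab$, and the orientation reversal flips the preferred normal direction to $\mathfrak{D}$, thereby swapping the over/under convention at every crossing inside $\mathfrak{D}$. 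Thus $\rho(L) = L'$, and since $\rho$ is an ambient isotopy the two links are isotopic.

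The main obstacle I expect is the bookkeeping in this final identification, namely verifying carefully that the purely two-dimensional cut-flip-glue operation on $\mathfrak{D}$ produces the same decorated diagram as the three-dimensional $180^\circ$ rotation about $ab$. The subtle point is that reversing the orientation of $\mathfrak{D}$ implicitly swaps the over/under convention inside $\mathfrak{D}$, and once this is pinned down the orientation-preserving nature of $\rho$ immediately yields the required ambient isotopy.
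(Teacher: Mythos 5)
Your proof is correct and takes essentially the same route the paper intends: the paper states this corollary without proof as an immediate (``straight-forward'') consequence of Theorem~\ref{t.diagrams}, the implicit point being exactly that a summand-flip is realized by an ambient isotopy of $S^3$, namely a $180^\circ$ rotation of a ball about the axis through the two boundary points $a$ and $b$, damped to the identity outside the ball. Your write-up simply supplies the standard details the paper leaves unstated, and you correctly pin down the one genuine subtlety, that regluing $\mathfrak{D}$ with reversed orientation flips the preferred normal and hence swaps over/under inside $\mathfrak{D}$, which is what makes the rotated tangle's projection agree with the flipped diagram of Definition~\ref{d.lflip}.
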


Theorem~\ref{t.diagrams} will follow from  Lemma~\ref{t.sg1} which relates the dual of a join-summand move on a ribbon graph (see Definition~\ref{d.djs}) to the summand-flip move on a link diagram.
\begin{lemma}\label{t.sg1}
 Let $G$ and $G'$ be signed plane graphs. Then   $D(G)\sim D(G')$ if and only if $G\sim G'$.
\end{lemma}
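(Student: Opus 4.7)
My plan is to prove the biconditional by showing that each of the generators of the ribbon-graph equivalence $\sim$ (duality, and dual of a join-summand) corresponds on the diagram side to either equality or a single summand-flip, and conversely that every summand-flip on $D(G)$ arises from a dual-of-join-summand move on $G$ or $G^*$.

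For the \emph{forward direction} ($G\sim G'$ implies $D(G)\sim D(G')$), by Definition~\ref{d.djs} it suffices to treat (i) $G'=G^*$ and (ii) $G'=H_1\vee H_2^{\,*}$ where $G=H_1\vee H_2$. Case~(i) is immediate from Lemma~\ref{l.diaofdual}. For case~(ii), Lemma~\ref{sums} writes $D(G)=D(H_1)\#D(H_2)$ with connected-sum arcs determined by the join arc at the joining vertex $v$, and Lemma~\ref{sum5} gives $G^{E(H_2)}=H_1\vee H_2^{\,*}$, so by Lemma~\ref{sums} again $D(G^{E(H_2)})=D(H_1)\#D(H_2^{\,*})$. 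Lemma~\ref{l.diaofdual} tells us $D(H_2^{\,*})=D(H_2)$ as diagrams on $S^2$, but the join arc on $H_2^{\,*}$ (coming from a vertex of $H_2^{\,*}$, hence a face of $H_2$) differs from the original join arc on $H_2$. Thus the two diagrams consist of the same sub-diagram $D(H_2)$ glued into the rest along \emph{different} arcs; this is exactly the operation performed by a summand-flip on a disc $\mathfrak{D}$ containing $D(H_2)$.

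For the \emph{backward direction}, it suffices to handle a single summand-flip transforming $D(G)$ into some $D'$. Let $\mathfrak{D}$ be the flipping disc with $\partial \mathfrak{D}$ meeting $D(G)$ transversely at exactly two points $a,b$. Checkerboard-colour the faces of $D(G)$ so that black faces correspond to vertices of $G$ and white faces to vertices of $G^*$. Each of the two arcs of $\partial\mathfrak{D}\setminus\{a,b\}$ lies in a single face of $D(G)$, and because at $a$ (and at $b$) the two arcs exit on opposite sides of $D(G)$, the two arcs lie in faces of opposite colours; in particular both $a$ and $b$ border a common black face $v$ and a common white face $w$. Using $G\sim G^*$ if necessary, we may assume the black arc borders $v\in V(G)$. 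Then the edges of $G$ whose corresponding crossings of $D(G)$ lie inside $\mathfrak{D}$ induce a subgraph $H_2$ with $G=H_1\vee H_2$ at $v$, and the summand-flip produces exactly $D(H_1\vee H_2^{\,*})=D(G^{E(H_2)})$ by the analysis of the forward direction. Hence $G'=G^{E(H_2)}$ with $G\sim G'$.

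The main obstacle, appearing in both directions, is the geometric claim that reversing orientation of a disc containing $D(H_2)$ yields the diagram of $H_1\vee H_2^{\,*}$. The content here is that dualizing a plane join-summand replaces the arc of attachment on a vertex of $H_2$ by the arc of attachment on a vertex of $H_2^{\,*}$ (a face of $H_2$), and that, at the level of the link diagram, this re-attachment is achieved precisely by an orientation-reversing cut-and-reglue of the disc. I would verify this by working directly from Definition~\ref{d.pd} of partial duality and tracking how the boundary cycle of $(V(H_2),\emptyset)$ that supplies the join arc is replaced by the boundary cycle corresponding to the relevant face, together with the fact (immediate from the construction of $D(G)$ on each edge) that these two boundary cycles lie on opposite sides of the sub-diagram $D(H_2)\subset\mathfrak{D}$.
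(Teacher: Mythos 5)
Your proposal is correct and, on the forward direction, follows essentially the same route as the paper: reduce to a single move, write $G=H_1\vee H_2$, use Lemma~\ref{sum5} to get $G^{E(H_2)}=H_1\vee H_2^*$, Lemma~\ref{sums} to split the diagrams as connected sums, and Lemma~\ref{l.diaofdual} to identify $D(H_2^*)$ with $D(H_2)$, so that the two diagrams differ only in the arc of attachment. Two points of comparison are worth making. First, the crux you flag --- that re-attachment along the dual arc is realized at the diagram level by an orientation-reversing cut-and-reglue --- is precisely where the paper does its real work: it forms the partial dual by an explicit detach--dualize--reattach construction, embeds $G$ in $S^2$ with the join vertex $v$ as the southern hemisphere so that $H_2^*$ lands in the northern hemisphere with the marking arrows on the equator, observes that re-attaching $H_1$ forces it to be ``flipped over'' into the southern hemisphere, and then invokes the uniqueness of embeddings of plane graphs in $S^2$ to convert this combinatorial computation into a statement about the unique embedded graphs, hence about $D(G)$ and $D(G')$. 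Your plan of tracking boundary cycles through Definition~\ref{d.pd} would work, but note that the relevant object is the boundary component $\alpha$ of the spanning ribbon subgraph $(V(G),E(H_2))$ passing through the two points $a$ and $b$ (as in the discussion preceding Lemma~\ref{sum5}), not a boundary cycle of $(V(H_2),\emptyset)$; until that verification is carried out your argument is a sketch at its most delicate point. Also, the paper's flip acts on the $H_1$ side while yours acts on the $H_2$ side; these agree because the cut-and-reglue of Definition~\ref{d.lflip} is symmetric in $\mathfrak{D}$ and its complement. Second, your backward direction --- checkerboard colouring the flipping disc, noting that the two arcs of $\partial\mathfrak{D}\setminus\{a,b\}$ lie in faces of opposite colours bordering a common black face $v$, and letting the crossings inside $\mathfrak{D}$ induce $H_2$ with $G=H_1\vee H_2$ at $v$ --- is spelled out more explicitly than in the paper, which reads its single-move correspondence bidirectionally from the same construction and figure; your version is sound, and since black faces are by construction vertices of $G$, the appeal to $G\sim G^*$ there is harmless but unnecessary.
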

\begin{proof}
It is enough to show that $G$ and $G'$ are related by a single dual of a join-summand move if and only if $D(G)$ and $ D(G')$ are related by a single summand-flip.  
Suppose that  $G$ and $G'$ are related by a single dual of a join-summand move. Then we can write $G=H_1\vee H_2$ and $ G'=H_1\vee H_2^* = (H_1\vee H_2)^{E(H_2)}$. 
If $H_1\cap H_2=v$, then 
we can form the partial dual $ (H_1\vee H_2)^{E(H_2)}$ in the following way: \begin{enumerate}
\item detach $E(H_1)\cup (V(H_1)\backslash v)$ from the vertex $v$ by `cutting' along the  intersections of $H_1$ and $v$. Record the position of the detached edges by using labelled marking arrows in such a way that  $H_1\vee H_2$ can be recovered by identifying  the arrows  of the same label on $H_2$ and $E(H_1)\cup (V(H_1)\backslash v)$. 
\item Form the dual $H_2^*$ of $H_2$, retaining the labelled arrows on the boundary. 
\item Attach $E(H_1)\cup (V(H_1)\backslash v)$ to $H_2^*$ by identifying the arrows of the same label. The resulting ribbon graph is $(H_1\vee H_2)^{E(H_2)}$.
\end{enumerate}
This process is illustrated in   Figure~\ref{sg1f1}.

\begin{figure}
\begin{center}
\hspace{-3cm}\includegraphics[height=20mm]{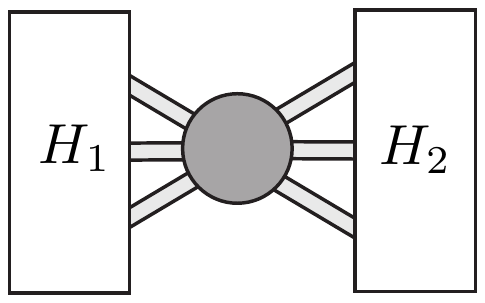} \raisebox{9mm}{\includegraphics[width=1cm]{arrow} } 
\includegraphics[height=20mm]{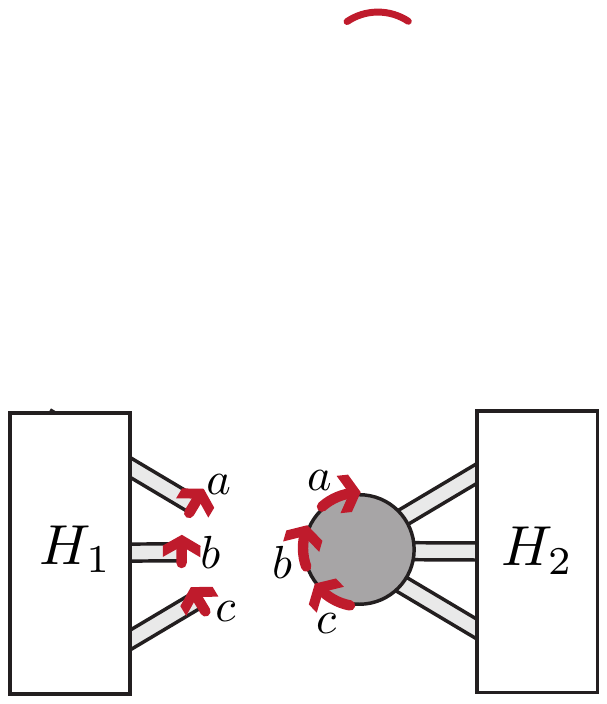} 

\vspace{5mm}

\hspace{3cm}\raisebox{9mm}{\includegraphics[width=1cm]{arrow} } 
\includegraphics[height=20mm]{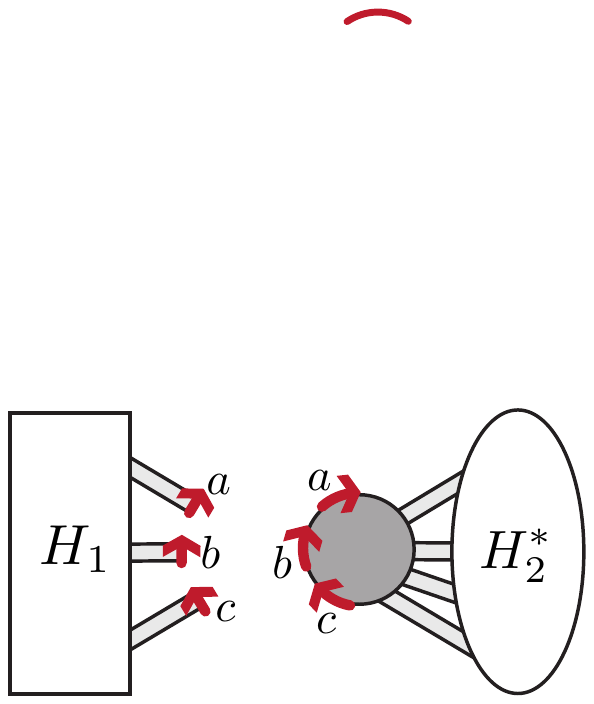}  \raisebox{9mm}{\includegraphics[width=1cm]{arrow} } 
\includegraphics[height=20mm]{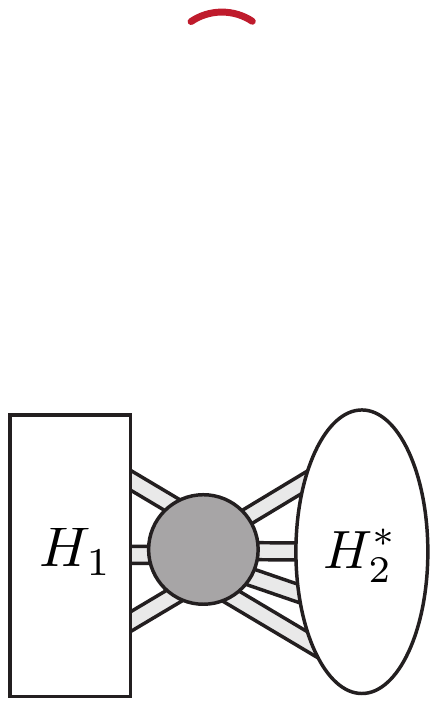}
\end{center}
\caption{Forming $(H_1\vee H_2)^{E(H_2)}$ as  in the proof of Lemma~\ref{t.sg1}.}
\label{sg1f1}
\end{figure}

  $G=H_1\vee H_2$ is a plane graph so it admits a unique embedding in the sphere $S^2$. We consider how the above construction of the partial dual $(H_1\vee H_2)^{E(H_2)}$ acts on the embedded ribbon graph $ H_1\vee H_2 \subset S^2$. Start off with an embedding of $H_1\vee H_2$ in $S^2$ where the vertex $v$ forms the southern hemisphere. Detaching $E(H_1)\cup (V(H_1)\backslash v)$ gives an embedding of $H_2$ in $S^2$ with marking arrows recording how $H_1$ was attached. Since $v\subset H_2\subset S^2$ is the southern hemisphere, the dual $H^*\subset S^2$ is embedded in the northern hemisphere with the marking arrows sitting on the equator. Finally, to attach and embed  $H_1$ (in order to obtain an embedding of   $H_1\vee H_2^* =(H_1\vee H_2)^{E(H_2)}$ in $S^2$) we ``flip over'' $H_1$ and embed it in the  southern hemisphere using the identifying arrows. The resulting embedding of $(H_1\vee H_2)^{E(H_2)}$ is obtained from the embedding of $H_1\vee H_2$ by a single dual of  a join-summand move acting on $H_1$.
The argument is illustrated in   Figure~\ref{sg1f2}.

\begin{figure}
\begin{center}
\hspace{-2cm}\includegraphics[height=30mm]{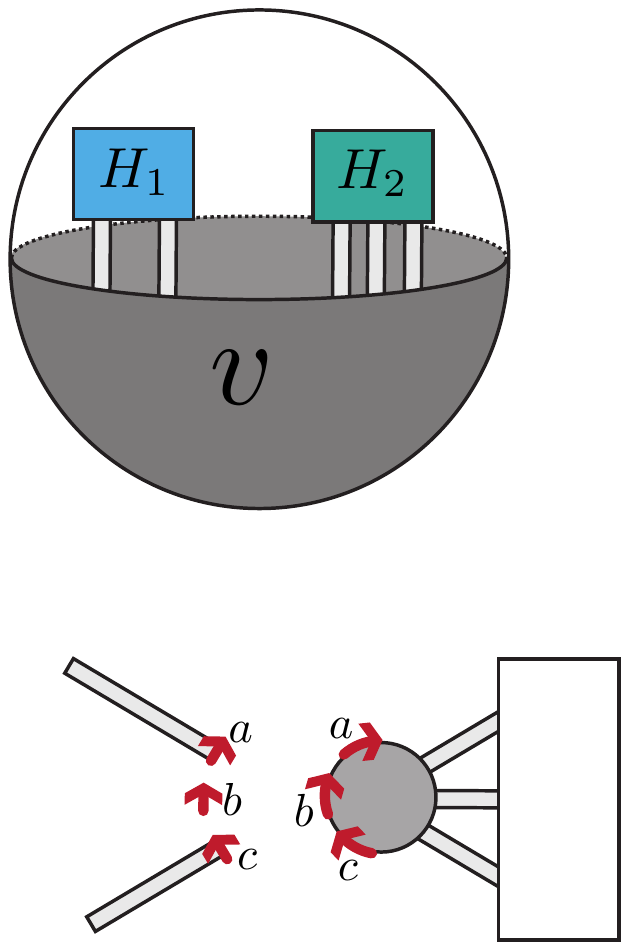}  \raisebox{8mm}{\includegraphics[width=1.5cm]{arrow} }
\includegraphics[height=30mm]{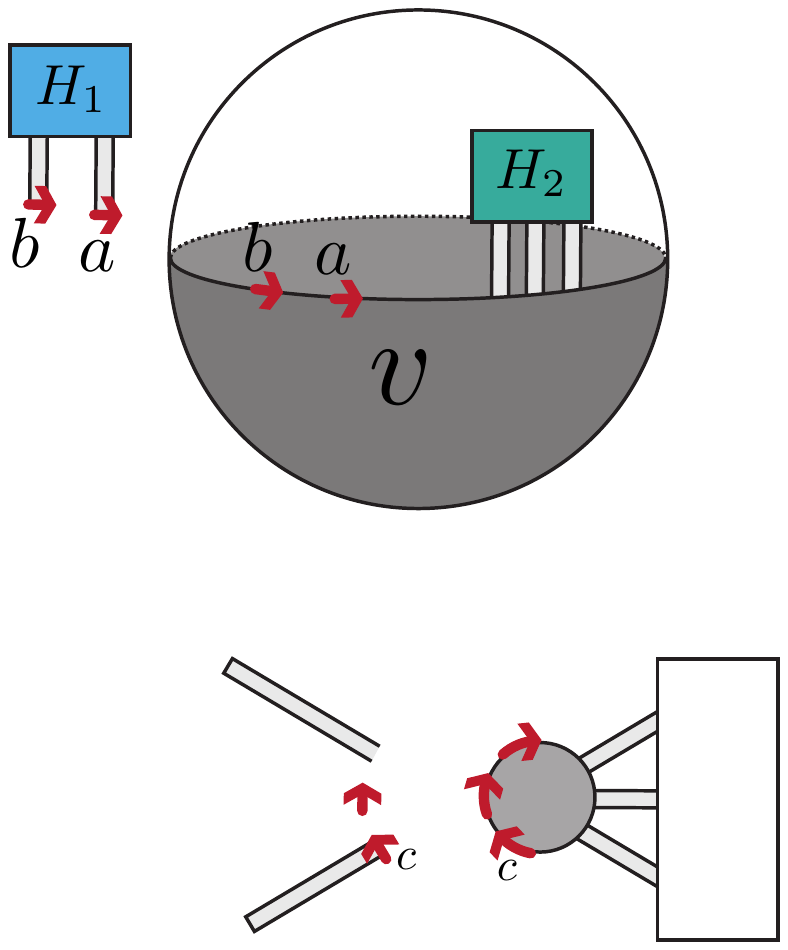}  

\vspace{5mm}

\hspace{2cm}\raisebox{8mm}{\includegraphics[width=1.5cm]{arrow} }
\includegraphics[height=30mm]{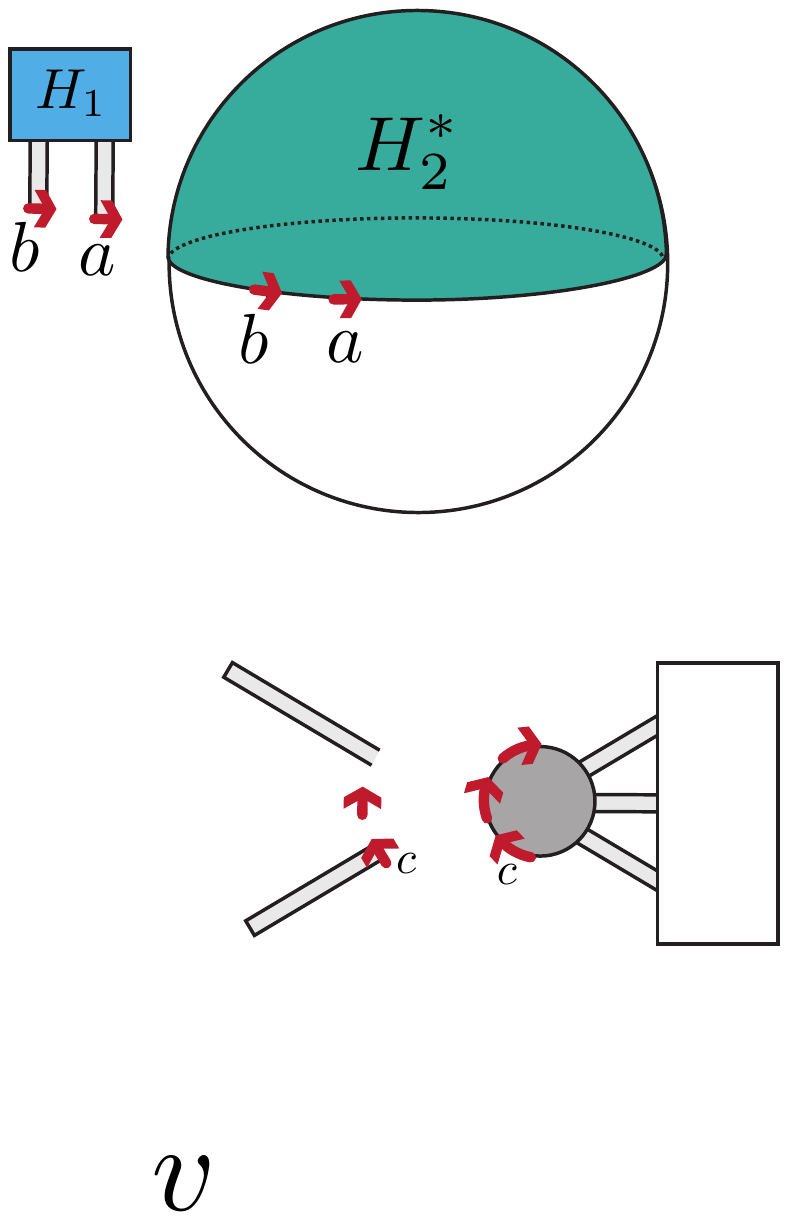}  \raisebox{8mm}{\includegraphics[width=1.5cm]{arrow} }
\includegraphics[height=30mm]{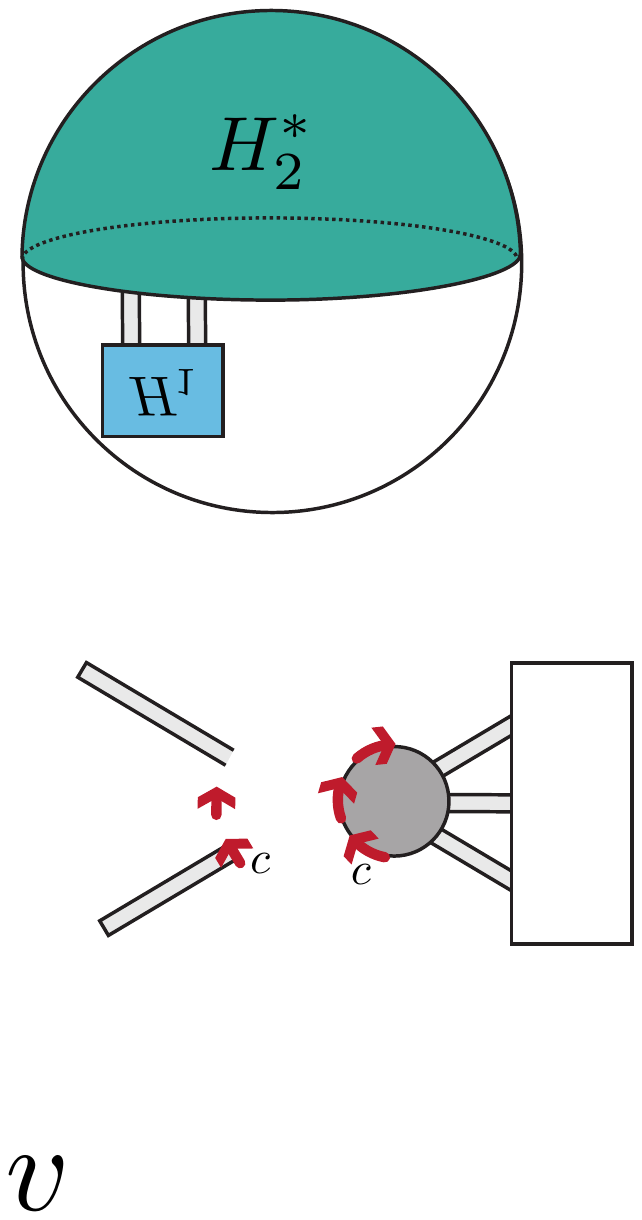}
\end{center}
\caption{Embedding $(H_1\vee H_2)^{E(H_2)}$ as  in the proof of Lemma~\ref{t.sg1}.}
\label{sg1f2}
\end{figure}

Since the embeddings of $G=H_1\vee H_2$ and $ G'=H_1\vee H_2^* $  in $S^2$ are unique, it follows that the embeddings of $G$ and $G'$ are related as in Figure~\ref{sg1f2}. The lemma then follows by considering the corresponding link diagrams $D(G)$ and $D(G')$ as illustrated in Figure~\ref{f.sg1}.
Since the link diagrams $D(H_2^*)$ and  $D(H_2)$ are equal by Lemma~\ref{l.diaofdual}, it follows that  
$G$ and $G'$ are related by a single dual of  a join-summand move if and only if $D(G)$ and $ D(G')$ are related by a single summand-flip. This completes the proof of the lemma.
\end{proof}

\begin{figure}
\begin{center}
\begin{tabular}{ccc}
  \includegraphics[height=30mm]{ld5}  &
\raisebox{10mm}{  \begin{tabular}{c} \includegraphics[width=3cm]{doublearrow}   \\  dual of  a join-summand  \end{tabular}}
&
\includegraphics[height=30mm]{ld8} 
\\   
\raisebox{3mm}{\rotatebox{90}{  \small{$D(H_1\vee H_2)$}}  } \includegraphics[height=30mm]{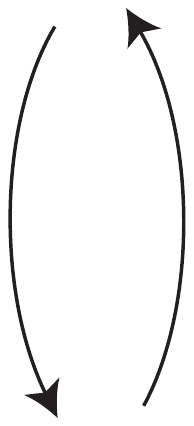}  \rotatebox{90}{  \small{$\T(D(H_1)\# D(H_2))$}  }&
&
\raisebox{3mm}{\rotatebox{90}{  \small{$D(H_1\vee H_2^*)$}}  } \includegraphics[height=30mm]{arrowtwo}  \rotatebox{90}{  \small{$\T(D(H_1)\# D(H_2^*))$}  }
 \\
 \includegraphics[height=30mm]{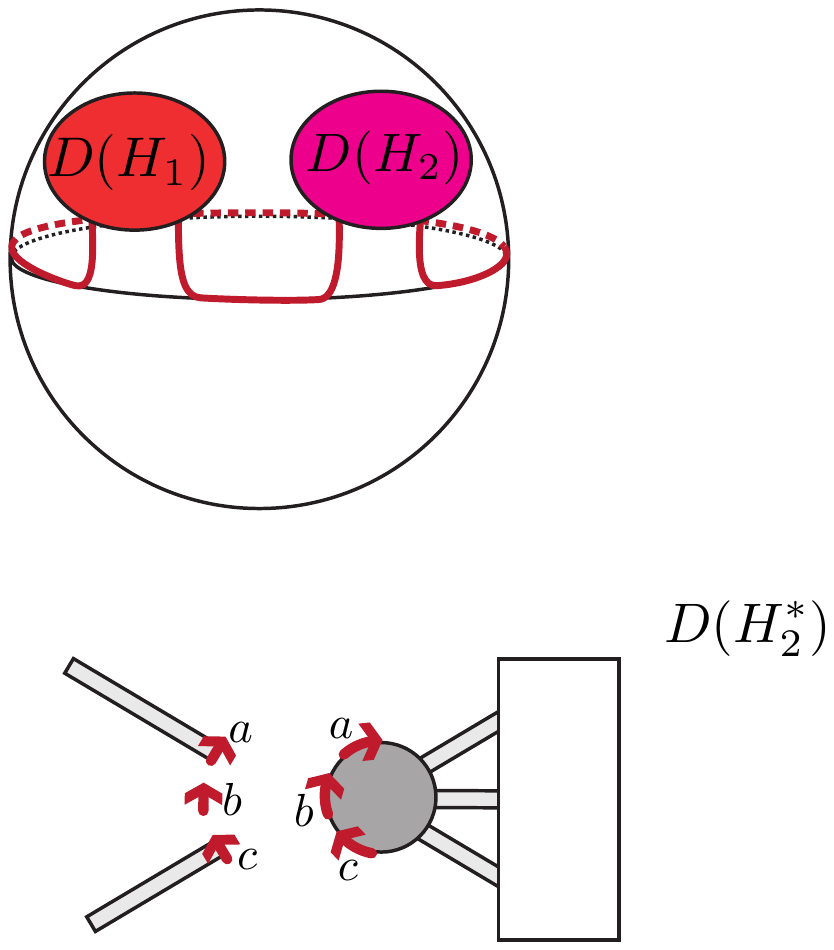}  &
\raisebox{10mm}{  \begin{tabular}{c} \includegraphics[width=3cm]{doublearrow}   \\  flipping a summand  \end{tabular}}
&
\includegraphics[height=30mm]{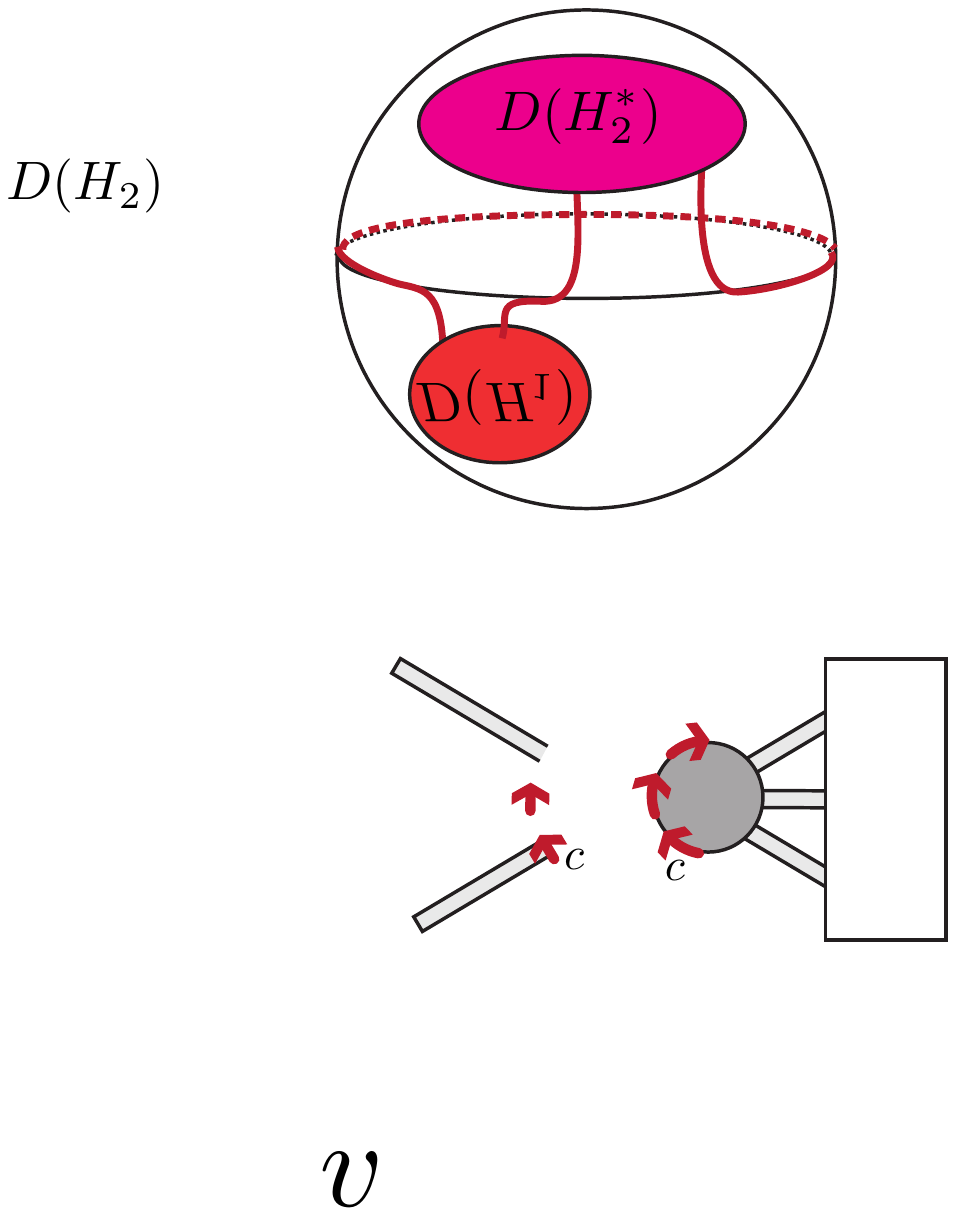} .

\end{tabular}
\end{center}
\caption{A figure used in the proof of Lemma~\ref{t.sg1}.}
\label{f.sg1}
\end{figure}

\begin{remark}
Lemma~\ref{t.sg1} {\em does not} say that the medial ribbon graphs of $G$ and   $G'$ are equal. (See \cite{EMM} for details on the relations between medial graphs and partial duals.) To see why this is, notice that in the proof of Lemma~\ref{t.sg1} the interior and the exterior of the vertex $v$, where the join occurs, are ``switched''. This switching may add half-twists to the edges. Consequently, twists may be added to the medial graphs. For link diagrams this switching is not a problem since links  are one dimensional.  In particular, this means that Lemma~\ref{t.sg1}
 does not contradict the well-known fact that the medial graphs of $G$ and $H$ are equal if and only if $H=G$ or $H=G^*$.
\end{remark}

\begin{proof}[Proof of Theorem~\ref{t.diagrams}.]
By Equation~\eqref{e.cald}, $D,D'\in \mathcal{D}(G)$ if and only if $D=D(H)$ and $D'=D(H^A)$, where $H$ is a partial dual of $G$, and $H$ and $H^A$ are both plane graphs.  By Theorem~\ref{t.sim},  this happens if and only if $H\sim H^A$. 
Finally, by Lemma~\ref{t.sg1},  $H\sim H^A$ if and only if $D=D(H)\sim D(H^A)=D'$.
\end{proof}

\subsection{Answering our motivating questions  }
Recall our motivating questions:
\begin{enumerate}
\renewcommand{\theenumi}{Q\arabic{enumi}}
\item How are the ribbon graphs of a link diagram related?
\item   Which ribbon graphs arise as ribbon graphs of a link diagram?
\item  What is the relation between link diagrams that are presented by  the same signed ribbon graph?  
\end{enumerate}
These questions ask for the extensions of the  well-known, basic properties of Tait graphs, \eqref{t1}-\eqref{t3}, to the ribbon graphs of a link digram.

We will now summarize the solutions to \eqref{q1}-\eqref{q3}.

\noindent \underline{Question \eqref{q1}:}
This question was answered in \cite{Mo2} and by Chmutov  in \cite{Ch1}, and comes from Proposition~\ref{p.pd3}: the ribbon graphs of a link diagram are all partial duals of each other.

\noindent \underline{Question \eqref{q2}:}
Corollary~\ref{c.pd6} provides a graph theoretical formulation of Question~\ref{q2}:
which ribbon graphs are the partial duals of plane graphs?  By applying Theorem~\ref{t.pchar1} we get the following answer.
\begin{theorem}\label{t.pchar2}
A  (signed) ribbon graph $G$ presents a link diagram if and only if it admits a \pbt.
\end{theorem}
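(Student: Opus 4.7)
The proof is essentially a one-line combination of two earlier results, so my plan is simply to stitch them together. First I would invoke Corollary~\ref{c.pd6}, which reformulates the knot-theoretic condition ``$G$ presents a link diagram'' as the purely graph-theoretic condition ``$G$ is a partial dual of a plane graph.'' In symbols, $G$ presents a link diagram if and only if there exists $A\subseteq E(G)$ such that $G^A$ is plane. (For the signed case, one uses the same Corollary, noting that the bijection between signed plane graphs and link diagrams via Tait's construction is the content of Property~\ref{t2}, and signs play no role in whether a partial dual is plane.)

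Next I would apply Theorem~\ref{t.pchar1}, which asserts that $G^A$ is a plane ribbon graph if and only if $A$ defines a plane-biseparation of $G$. Combining this equivalence with the previous one, an $A\subseteq E(G)$ with $G^A$ plane exists if and only if some $A\subseteq E(G)$ defines a plane-biseparation, which is by definition what it means for $G$ to admit a plane-biseparation.

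There is no real obstacle here: both input theorems have already been proved, and the argument is a formal chain of biconditionals. The only point worth being slightly careful about is the signed/unsigned distinction, since Theorem~\ref{t.pchar1} is stated for (unsigned) ribbon graphs whereas Theorem~\ref{t.pchar2} is phrased for signed ribbon graphs. This is handled by observing that plane-biseparations are defined purely in terms of the underlying edge set and the topology of the induced ribbon subgraphs, so they are blind to signs; and that the map from signed plane graphs to link diagrams in Subsection~\ref{ss.med} is a bijection, so signs on $G$ and on its partial duals present no obstruction to the equivalence. Thus the theorem follows immediately.
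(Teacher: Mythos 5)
Your proposal is correct and is exactly the paper's own argument: the paper derives Theorem~\ref{t.pchar2} by combining Corollary~\ref{c.pd6} with Theorem~\ref{t.pchar1}, just as you do. Your extra remark on the signed case is sound but already absorbed into Corollary~\ref{c.pd6}, which is stated for both signed and unsigned ribbon graphs.
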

 This result can also be used to deduce other structural results about the set of ribbon graphs that present links.
\begin{corollary}Let $k\geq 2$.
If a ribbon graph $G$ contains a $k$-connected, non-plane ribbon subgraph, then $G$ is not the ribbon graph of a link diagram.  
\end{corollary}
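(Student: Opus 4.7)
The plan is to prove this by contrapositive, using Theorem~\ref{t.pchar2} to convert the hypothesis ``$G$ presents a link diagram'' into the structural statement that $G$ admits a \pbt, and then showing that this structural statement is incompatible with containing a $k$-connected non-plane ribbon subgraph. In fact the result is essentially the link-theoretic repackaging of Corollary~\ref{c.pchar3}, since a ribbon graph presents a link diagram precisely when it is a partial dual of a plane graph (Corollary~\ref{c.pd6}); nevertheless I would outline a direct argument using the new characterization, because that is the most natural setting.

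Suppose for contradiction that $G$ presents a link diagram and contains a $k$-connected non-plane ribbon subgraph $H$ with $k \geq 2$. By Theorem~\ref{t.pchar2}, $G$ admits a \pb defined by some $A\subseteq E(G)$. If the \pb is trivial there is nothing to check, so assume it is non-trivial, giving a sequence of $1$-sums $G = H_1\oplus H_2\oplus\cdots\oplus H_l$ in which every $H_i$ is a plane ribbon subgraph of $G$. The key step is to show that all edges of $H$ lie in a single summand $H_i$. Suppose instead that $H$ has edges $e \in E(H)\cap E(H_i)$ and $e' \in E(H)\cap E(H_j)$ with $i \neq j$. Because the summands pairwise meet in at most one vertex and together cover $G$, any path in $G$ joining $e$ to $e'$ must pass through some vertex $v$ at which a $1$-sum occurs and which separates $E(H_i)$ from $E(H_j)$ in the underlying abstract graph of $G$. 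In particular, any such path that lies entirely in $H$ must also pass through $v$, forcing $v \in V(H)$; but then removing $v$ from $H$ disconnects $e$ from $e'$, making $v$ a cut vertex of $H$. This contradicts $H$ being $2$-connected (which follows from $k \geq 2$).

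Having placed $H$ inside a single $H_i$ as a ribbon subgraph, I would conclude by observing that deleting vertices and edges from a ribbon graph can only preserve or decrease its genus (a direct consequence of the Euler-characteristic formula $2-2g = |V|-|E|+f$, since deleting an edge decreases $|E|$ by one and changes $f$ by $\pm 1$). Therefore $H$ is plane, contradicting our hypothesis that $H$ is non-plane. The main obstacle is the middle step: carefully verifying that a shared vertex of two $1$-summands really does act as a cut vertex of $H$ whenever $H$ straddles them. This is the one place where the geometric/topological structure of the \pb must be converted into a purely combinatorial statement about the abstract graph underlying $H$, and it is what makes the hypothesis ``$k$-connected'' with $k\geq 2$ essential.
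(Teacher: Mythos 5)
Your proof is correct and is essentially the paper's own argument: the paper states this corollary as an immediate consequence of Theorem~\ref{t.pchar2} (it is the link-diagram restatement of Corollary~\ref{c.pchar3}), and the intended reasoning is precisely yours---a $2$-connected subgraph cannot straddle distinct $1$-summands of a plane-biseparation because the $1$-sum vertices are separating, so it lies inside a single plane summand and is therefore plane, since taking ribbon subgraphs cannot increase genus. One small point to polish: when $e$ or $e'$ is incident to the separating vertex $v$, you should phrase the contradiction as ``$H-v$ is disconnected,'' arguing with the other endpoints of $e$ and $e'$ (which exist because a $2$-connected graph has no loops), rather than saying that removing $v$ ``disconnects $e$ from $e'$,'' since removing $v$ deletes those incident edges altogether.
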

\begin{corollary}
If a graph contains a $K_5$- or $K_{3,3}$-minor then no embedding of it  presents a link diagram.
\end{corollary}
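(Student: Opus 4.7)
The plan is to obtain this corollary as an immediate consequence of two already-established results in the excerpt, with essentially no new work required. The two ingredients are Corollary~\ref{c.pd6}, which identifies the ribbon graphs that present link diagrams with the partial duals of plane graphs, and Corollary~\ref{c.pchar4}, which states that no embedding of a graph containing a $K_5$- or $K_{3,3}$-minor can be a partial dual of a plane graph. Putting these together is the whole argument.

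In more detail, I would first appeal to Corollary~\ref{c.pd6} to translate the hypothesis about presenting a link diagram into the purely graph-theoretic statement that the ribbon graph in question is a partial dual of a plane graph. Then I would invoke Corollary~\ref{c.pchar4}: if the underlying abstract graph of an embedding contains a $K_5$- or $K_{3,3}$-minor, then that embedding cannot arise as the partial dual of a plane graph. Chaining the two equivalences/implications together, no such embedding can present a link diagram.

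I expect no real obstacle here, since all the structural work has already been done: Theorem~\ref{t.pchar1} characterizes partial duals of plane graphs via \pbst, Corollary~\ref{c.pchar5} derives planarity of such partial duals from Theorem~\ref{t.pchar1}, and Corollary~\ref{c.pchar4} then follows from Kuratowski's (equivalently Wagner's) theorem applied to Corollary~\ref{c.pchar5}. Thus the proof can be written in essentially a single sentence: combine Corollaries~\ref{c.pd6} and~\ref{c.pchar4}. Alternatively, one could be completely self-contained by noting that a ribbon graph presenting a link diagram is a partial dual of a plane graph (Corollary~\ref{c.pd6}), hence planar (Corollary~\ref{c.pchar5}), and then applying Kuratowski's theorem to rule out the presence of a $K_5$- or $K_{3,3}$-minor.
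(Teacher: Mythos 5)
Your proposal is correct and matches the paper's (implicit) argument exactly: the paper states this corollary as an immediate consequence of Theorem~\ref{t.pchar2} (equivalently, of Corollary~\ref{c.pd6} combined with Corollary~\ref{c.pchar4}, itself just Corollary~\ref{c.pchar5} plus Wagner's minor form of Kuratowski's theorem), offering no further proof. There is nothing to add; chaining the two cited corollaries is the whole argument.
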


\noindent \underline{Question \eqref{q3}:} 
This question was answered by Theorem~\ref{t.diagrams}, where it was shown that all diagrams that are presented by the same signed ribbon graphs are related in a very simple way: they are summand flips of one another.   In particular, this means that (the isotopy class of) every link in $3$-space will define a unique set of ribbon graphs and so link invariants may be defined on these sets.   

\medskip

We conclude with a few remarks on the graph theory that this study has introduced. The connection between genus and separability of ribbon graphs that is opened up by Theorem~\ref{t.pchar1} is one of interest beyond the applications to knot theory given here. In \cite{Mo6}, a more general study of the connections between separability and the genus of a partial dual is given. In particular, it is shown that Theorems~\ref{t.pchar1}, \ref{t.pbs}, and~\ref{t.sim} can be extended to characterize partial duals of graphs in the real projective plane. These characterizations of the genus of a partial dual in terms of separability, do not extend beyond  graphs in the plane and real projective plane. For higher genus graphs, a concept that uses higher connectivity is required, and the characterization of the partial duals of higher genus graphs is a work in progress.

\section*{Acknowledgements}
I would like to thank Scott Carter, Sergei Chmutov, and Martin Loebl for helpful conversations.

\end{document}